\documentclass[10pt,a4paper]{amsart}

\usepackage[a4paper,margin=1.8cm]{geometry}
\input{glyphtounicode}
\usepackage[T1]{fontenc}
\usepackage{lmodern}
\usepackage[utf8]{inputenc} 
\usepackage[english]{babel}
\usepackage{lmodern}
\usepackage{microtype}
\usepackage{setspace}
\allowdisplaybreaks

\usepackage{amsmath,amssymb,amsfonts,amsthm}
\usepackage{mathtools}
\usepackage{mathrsfs}
\usepackage{stmaryrd}
\usepackage{dsfont}
\usepackage{esint}

\numberwithin{equation}{section}

\usepackage{graphicx}
\usepackage{float}
\usepackage{array}
\usepackage{booktabs}
\usepackage{diagbox}
\usepackage{caption}
\usepackage{subcaption}

\usepackage[dvipsnames]{xcolor}
\usepackage{tikz}
\usetikzlibrary{
  arrows.meta,
  backgrounds,
  calc,
  decorations.markings,
  positioning
}
\usepackage{pgfplots}
\pgfplotsset{compat=1.13}

\newif\ifdraft
\draftfalse

\usepackage[normalem]{ulem}
\usepackage{comment}

\ifdraft
  \usepackage[colorinlistoftodos]{todonotes}
  
  \newcommand{\todoY}[1]{\todo[color=yellow!30]{#1}}
  \newcommand{\marginlabel}[1]{%
    \marginpar{\raggedleft\hspace{0pt}\tiny\textcolor{red}{#1}}%
  }
\else
  
  \newcommand{\todoY}[1]{}
  \newcommand{\marginlabel}[1]{}
\fi

\usepackage{url}
\usepackage[
  colorlinks=true,
  linkcolor=black,
  citecolor=black,
  urlcolor=black,
  bookmarksnumbered=true
]{hyperref}

\theoremstyle{plain}
\newtheorem{lemma}{Lemma}[section]
\newtheorem{proposition}[lemma]{Proposition}
\newtheorem{theorem}[lemma]{Theorem}
\newtheorem{corollary}[lemma]{Corollary}

\theoremstyle{definition}

\theoremstyle{remark}
\newtheorem{remark}[lemma]{Remark}

\newcommand{\C}{\mathbb C}

\newcommand{\R}{\mathbb R}

\DeclareMathOperator{\supp}{supp}

\AtBeginDocument{%
  \setlength{\abovedisplayskip}{1.5pt plus 1pt minus 1pt}%
  \setlength{\belowdisplayskip}{1.5pt plus 1pt minus 1pt}%
  \setlength{\abovedisplayshortskip}{1pt plus 1pt minus 1pt}%
  \setlength{\belowdisplayshortskip}{1pt plus 1pt minus 1pt}%
}

\newcommand\restr[2]{%
  \left.\kern-\nulldelimiterspace
  #1
  \vphantom{\big|}
  \right|_{#2}
}

\title[Kalman Structure and Observability]{Kalman Structure and Observability for Transport Systems}

\author[Y. Simpore ]{%
Yacouba Simpore
}

\date{}

\begin{document}
\maketitle

\begingroup
\renewcommand\thefootnote{}
\footnotetext{%
\textsuperscript{} Chair for Dynamics, Control, Machine Learning and Numerics,
Department of Mathematics, Friedrich-Alexander-Universit\"{a}t Erlangen--N\"{u}rnberg,
Cauerstrasse~11, 91058 Erlangen, Germany.

\textsuperscript{} Universit\'e Yembila Abdoulaye TOGUYENI,
Burkina Faso.
\textit{E-mail addresses:}
simpore.yacouba@fau.de.}
\endgroup
\begin{abstract}
We study observability and controllability for constant-coefficient
first-order hyperbolic systems on the real line when only part of the state is
observed or controlled. Even when the Kalman rank condition holds, the usual
\(L^2(\mathbb R)^N\)-observability estimate may fail because some components
are detected only through the dynamics.

We show that the Kalman structure determines the appropriate observability estimate. A
component that becomes visible after \(k\) algebraic steps is measured at low
Fourier frequencies with a weight of order \(|\xi|^{2k}\) in the Fourier variable. This yields a natural Kalman-adapted observation space for the system.

We also prove localized observability for diagonalizable systems on observation
sets with uniformly bounded gaps and, separately, extend the whole-line
construction to systems with real spectrum and Jordan blocks.
\end{abstract}
\noindent\textbf{2020 Mathematics Subject Classification.}
Primary 93B07, 35L40; Secondary 93B05, 93C20.

\noindent\textbf{Keywords.}
Observability; controllability; transport systems; Kalman rank condition; low-frequency loss.
\section{Introduction}
\label{sec:introduction}

For hyperbolic systems, partial observability and controllability depend on
the interaction between propagation and algebraic coupling. Information travels
along characteristics, so a localized observation can detect a component only
when the corresponding trajectories meet the observation region. At the same
time, if the observation or the control acts only on part of the state, the
remaining components can be recovered only through the couplings generated by
the dynamics.

On the real line, the algebraic recovery of unobserved directions has a
specific low-frequency obstruction. In contrast with bounded domains, there
are no boundary reflections, compactness effects, or return mechanisms. Even
in the global case, where the observation region is the whole line and no
geometric obstruction remains, the recovery of indirectly observed directions
may fail to be uniform as the Fourier frequency tends to zero. At the Fourier level, this loss is measured by the finite-time observation Gramian, defined below. It leads to a weaker, frequency-dependent observability scale, whose
precise form is identified below.

We study this phenomenon for constant-coefficient first-order hyperbolic
systems on the real line with partial internal observation and the
corresponding partial internal control,
\begin{equation}
\label{eq:controlled-system}
\begin{cases}
y_t+A y_x = B\chi_{\mathcal O}u,
& (t,x)\in (0,T)\times\mathbb R,\\
y(0,x)=y_0(x),
& x\in\mathbb R.
\end{cases}
\end{equation}
Here \(A\in\mathbb R^{N\times N}\), \(B\in\mathbb R^{N\times m}\), with
\(1\leq m\leq N\), and \(\mathcal O\subset\mathbb R\) is the control region.
The matrix \(B\) determines the directly actuated subspace
\(\operatorname{Ran}B\), while \(\chi_{\mathcal O}\) accounts for spatial
localization. When \(\operatorname{rank}B<N\), some directions are not
directly actuated and can be reached only through the transport coupling.

The corresponding adjoint system is
\begin{equation}
\label{adjoint-system}
\begin{cases}
q_t+A^\top q_x=0,
& (t,x)\in(0,T)\times\mathbb R,\\
q(T,x)=q_T(x),
& x\in\mathbb R,
\end{cases}
\end{equation}
and the measured quantity is \(B^\top q\) on
\((0,T)\times\mathcal O\). Thus only the components seen through \(B^\top\) are directly observed.
The relevant algebraic assumption is the Kalman rank condition: there exists
an integer \(K\geq 0\) such that
\[
\operatorname{rank}(B,AB,\ldots,A^K B)=N.
\]
The smallest such integer is called the Kalman index.
In finite dimension, this condition encodes the recovery of unobserved or
uncontrolled directions through successive applications of the dynamics; see,
for instance, \cite{Kalman1963,Seidman}. In the present PDE setting, this
algebraic visibility becomes frequency-dependent. The next subsection
identifies the corresponding low-frequency obstruction.

By the Hilbert Uniqueness Method, observability estimates for
\eqref{adjoint-system} imply controllability results for
\eqref{eq:controlled-system}; see, e.g.,
\cite{Lions1988,TucsnakWeiss2009}. Because of the low-frequency degeneracy,
the natural observability estimate is not formulated in the standard
\(L^2(\mathbb R)^N\) scale. Instead, the correct space is a
Kalman-adapted observation space \(\mathcal Y_K^-\), whose norm weights each
Kalman layer according to its algebraic depth. The corresponding dual
controllability space is denoted by \(\mathcal Y_K^+\). These spaces are
defined in Section~\ref{sec:main-results}.

Throughout the paper we use the Fourier convention
\[
\widehat f(\xi)=\int_{\mathbb R} f(x)e^{-ix\xi}\,dx,
\qquad
\widehat{\partial_x f}(\xi)=i\xi\widehat f(\xi).
\]
The notation \(I_d\) denotes the identity matrix of size \(d\), \(^{\top}\)
denotes transpose, and \(^{*}\) denotes the Hermitian adjoint after
complexification. Inequalities between Hermitian matrices are understood in the
Loewner order, equivalently in the sense of quadratic forms.

\subsection{Low-frequency Kalman obstruction}

We first isolate the obstruction in the global case
\(\mathcal O=\mathbb R\). Then Fourier modes
are decoupled, and any failure of standard \(L^2(\mathbb R)^N\)-observability
is purely algebraic and frequency-dependent. Writing \(s=T-t\), the
observation of the adjoint solution is described, frequency by frequency, by
the finite-time Gramian
\begin{equation}
\label{eq:intro-gramian}
\mathcal G_T(\xi)
=
\int_0^T
e^{-is\xi A}
BB^\top
e^{is\xi A^\top}\,ds .
\end{equation}
A uniform observability estimate in \(L^2(\mathbb R)^N\) would require
\[
\mathcal G_T(\xi)\geq \theta I_N,
\qquad \theta>0,
\]
with \(\theta\) independent of \(\xi\). At zero frequency,
\[
\mathcal G_T(0)=TBB^\top .
\]
Thus there is no obstruction at \(\xi=0\) when \(\operatorname{rank}B=N\).
The difficulty appears precisely in the indirect case
\(\operatorname{rank}B<N\), or equivalently
\(\ker B^\top\neq\{0\}\). Then \(\mathcal G_T(0)\) detects only the directions
seen directly by \(B^\top\). For each fixed \(\xi\neq0\), the Kalman coupling may reveal the missing directions, but the strength of this recovery degenerates as \(\xi\to0\). Thus the obstruction is not the absence of Kalman coupling, but the lack of
uniformity of this coupling at low frequencies. The finite-time Gramian records this degeneracy and
selects the weighted scale in which the indirectly observed components can still be measured.

The order of this degeneracy is algebraic. A direction \(v\in\mathbb R^N\) is
first visible at depth \(k\) if
\[
B^\top v=B^\top A^\top v=\cdots
=B^\top(A^\top)^{k-1}v=0,
\qquad
B^\top(A^\top)^k v\neq0 .
\]
Then the first nonzero term in the expansion of
\(B^\top e^{is\xi A^\top}v\) appears at order \(k\). The observed amplitude is
of order \(|\xi|^k\), and the observed energy is of order \(|\xi|^{2k}\).

This mechanism is already visible in the one-dimensional wave equation
\[
u_{tt}-u_{xx}=0.
\]
Set
\[
v=u_t,\qquad w=u_x,\qquad U=(v,w)^\top .
\]
Then
\[
U_t+AU_x=0,
\qquad
A=
\begin{pmatrix}
0&-1\\
-1&0
\end{pmatrix}.
\]
Since \(A=A^\top\), the direct and adjoint transport dynamics coincide. If only
the velocity \(v\) is observed, then \(B=e_1\) and \(B^\top U=v\). For an
adjoint terminal datum \(U_T=(0,f)^\top\), lying entirely in the unobserved
strain component, the observed Fourier mode is
\[
\widehat v(t,\xi)
=
-i\sin((T-t)\xi)\widehat f(\xi).
\]
Hence, as \(\xi\to0\),
\[
\int_0^T|\widehat v(t,\xi)|^2\,dt
=
\left(\frac{T^3}{3}|\xi|^2+O(|\xi|^4)\right)
|\widehat f(\xi)|^2 .
\]
Thus the unobserved component is not lost, but it is detected only with
strength \(|\xi|^2\) at the energy level. This is the first nontrivial Kalman
loss: one algebraic interaction produces one power of \(|\xi|\) in amplitude,
and therefore two powers in the observed energy.

\subsection{Kalman-adapted Fourier scale and main contributions}

The results involve two structures: an algebraic one, given by the Kalman filtration, and a geometric one, encoded by the observation set. Let \(K\) be
the Kalman index. We associate with \((A,B)\) the filtration
\[
\mathcal K_k=\operatorname{Ran}(B,AB,\ldots,A^kB),
\qquad
\mathcal K_{-1}=\{0\},
\]
and the orthogonal layers
\[
\mathcal V_k=\mathcal K_k\cap \mathcal K_{k-1}^{\perp},
\qquad k=0,\ldots,K.
\]
The layer \(\mathcal V_0=\operatorname{Ran}B\) is directly observed. For
\(k\geq1\), directions in \(\mathcal V_k\) become visible only after \(k\)
algebraic interactions. Near \(\xi=0\), the Gramian detects this layer with
energy of order \(|\xi|^{2k}\). Away from zero frequency the loss disappears,
which leads to the weights
\[
\rho_k(\xi)=\min\{|\xi|^{2k},1\}.
\]
These weights define the Kalman-adapted observation space
\(\mathcal Y_K^-\), and the dual controllability space is denoted by
\(\mathcal Y_K^+\). Thus the functional scale is not imposed externally; it is
selected by the low-frequency behavior of the Gramian.

For localized observation, we assume that the observation set satisfies a
one-dimensional bounded-gap condition. More precisely, there exist
\(\ell_{\mathcal O}>0\) and \(G_{\mathcal O}<\infty\) such that
\[
\mathcal O=\bigcup_{n\in\mathbb Z}(a_n,b_n),
\qquad b_n<a_{n+1},
\]
and
\begin{equation}
\tag{BG}
\label{eq:gcc}
\ell_{\mathcal O}:=\inf_{n\in\mathbb Z}(b_n-a_n)>0,
\qquad
G_{\mathcal O}:=\sup_{n\in\mathbb Z}(a_{n+1}-b_n)<\infty .
\end{equation}
We recall that a measurable set \(E\subset\mathbb R\) is said to be
\((\gamma,L)\)-thick if
\[
|E\cap (x,x+L)|\geq \gamma L,
\qquad \forall x\in\mathbb R,
\]
for some constants \(L>0\) and \(\gamma\in(0,1]\). In the present
one-dimensional setting, the bounded-gap condition implies the usual
thickness condition from the Logvinenko--Sereda theory; see
\cite{LogvinenkoSereda1974,Kovrijkine2001}. Indeed, every interval of length
\[
L_{\mathcal O}:=\ell_{\mathcal O}+G_{\mathcal O}
\]
meets \(\mathcal O\) in a set of measure at least \(\ell_{\mathcal O}\).
Equivalently, \(\mathcal O\) is
\((\gamma_{\mathcal O},L_{\mathcal O})\)-thick, with
\[
\gamma_{\mathcal O}
=
\frac{\ell_{\mathcal O}}{\ell_{\mathcal O}+G_{\mathcal O}} .
\]
The two parameters play different roles. The lower bound
\(\ell_{\mathcal O}\) enters the spectral constants through the thickness
estimate, whereas the maximal gap \(G_{\mathcal O}\) determines the geometric
time threshold. If \(0\notin\sigma(A)\), set
\[
\lambda_*:=\min_{\lambda\in\sigma(A)}|\lambda|>0,
\qquad
T_{\rm geom}:=\frac{G_{\mathcal O}}{\lambda_*}.
\]
This is the maximal time needed by the slowest characteristic family to cross
an unobserved gap. The condition \(T>T_{\rm geom}\) is therefore the natural
geometric threshold for uniform localized observability.

The main contributions are the following.
\begin{itemize}
\item We identify the Kalman-adapted Fourier scale selected by the finite-time
observation Gramian. This gives whole-line observability for every \(T>0\),
with the sharp small-time rate dictated by the deepest Kalman layer.

\item For diagonalizable \(A\) with real nonzero characteristic speeds, we prove
localized observability on sets satisfying \eqref{eq:gcc}, for every
\(T>T_{\rm geom}\). By HUM duality, this yields exact controllability in
\(\mathcal Y_K^+\), with quantitative bounds on the control cost.

\item We prove that the weights \(|\xi|^{2k}\) are optimal within the natural
class of block-diagonal Fourier multiplier norms adapted to the Kalman
filtration.

\item In the whole-line setting, we also treat matrices with real spectrum and
possible Jordan blocks. The corresponding Jordan-adapted scale combines the
low-frequency Kalman weights with the polynomial high-frequency weights
generated by the Jordan structure.
\end{itemize}

The word ``sharp'' is used in three different senses. First, the
low-frequency Kalman weights are optimal, both for whole-line and localized
observation, within the adapted block-diagonal Fourier multiplier scale.
Second, in the whole-line case, the time dependence of the observability
constant is optimal and is of order
\[
\max\{T^{-1},T^{-(2K+1)}\}.
\]
Third, in the localized problem, the threshold \(T_{\rm geom}\) is
geometrically sharp. We do not claim that the polynomial blow-up exponent of
the localized cost as \(T\downarrow T_{\rm geom}\) is optimal.

\subsection{Related work}
\label{subsec:related_work}
First-order hyperbolic and transport systems arise in wave propagation,
balance laws, networked systems, and distributed-parameter or
port-Hamiltonian models with partial observation or actuation
\cite{Lax2006,BastinCoron2016,JacobZwart2012,AugnerJacob2014};
see also \cite{XieDubljevic2021,XieEtAl2023} for recent transport examples.

The controllability and observability of hyperbolic systems are classical
topics in control theory. They are closely related to the Hilbert Uniqueness
Method of Lions and to Russell's work on exact controllability
\cite{Lions1988,Russell1978}; see also \cite{TucsnakWeiss2009}
for general background on control theory.

For systems on bounded domains, boundary conditions and reflections often play
an essential role in observability and controllability arguments. In several
situations, compactness--uniqueness methods are also used to close
observability estimates; see, for instance,
\cite{Kreiss1970,LiRao2003,Li2008}. Boundary control problems may further
couple incoming and outgoing characteristic modes through the boundary
operator. These features are specific to bounded-domain problems and are absent
on the real line.

Localized observation on unbounded sets is often based on
Logvinenko--Sereda spectral inequalities on thick sets, in the spirit of the
Lebeau--Robbiano method
\cite{LogvinenkoSereda1974,Kovrijkine2001,SEDP,LucMiller2010,Apraiz2014}.
In the present conservative transport setting, this spectral ingredient has to
be combined with propagation along characteristics and with the algebraic
structure of the observation.

Kalman-type conditions also play an important role in degenerate parabolic,
hypoelliptic, and parabolic--transport controllability problems
\cite{BEP,BeauchardKoenigLebalch2020,KoenigLissy23}. In particular, the works
of Beauchard--Koenig--Le Balc'h and Koenig--Lissy are close in spirit to the
present paper, since Kalman-type algebraic conditions are used there to
compensate for underactuation in coupled parabolic--transport systems.

The setting and the mechanism behind the loss are, however, different. In
Koenig--Lissy, the problem is posed on the one-dimensional torus. The Fourier
spectrum is therefore discrete, and the controllability criterion is formulated
mode by mode through a spectral Kalman rank condition. Moreover, the positive
controllability results are obtained for sufficiently regular Sobolev initial
data, and the authors also show that this regularity requirement cannot in
general be removed.

By contrast, the present paper deals with a purely first-order, undamped
transport system on the real line. There is no parabolic component and hence no
parabolic smoothing. The main obstruction is instead a low-frequency one: even
for global observation, the finite-time observation Gramian degenerates as the
continuous Fourier frequency tends to zero. Thus the Kalman structure does not
only appear as a rank condition on individual modes; it determines the precise
low-frequency weights and the Kalman-adapted observation space in which uniform
observability holds.

There is also a structural analogy with the Shizuta--Kawashima condition for
partially dissipative hyperbolic systems
\cite{ShizutaKawashima1985,XuKawashima2015,CrinBaratShouZuazua2025,
CRINBARAT2025103757}. In that theory, algebraic couplings transfer
dissipation from damped to undamped components. Here the transfer concerns
observability rather than decay. The analogy is therefore algebraic: in our
case, the coupling does not create dissipation, but determines how the
rank-deficient observation sees the different Kalman layers.

Finally, some coupled systems exploit an algebraic cascade by observing or
recovering higher-order quantities generated from the measured component, such
as \(B^\top(A^\top)^j q\); see, for instance, \cite{LissyZuazua2019}. By
contrast, the observation considered here is not augmented: the measured
quantity remains \(B^\top q\).

The present paper is concerned with this conservative real-line situation,
where boundary return, smoothing, dissipation, and augmented observations are
all absent. This separates the problem from the mechanisms above and leaves
the original rank-deficient observation as the only source of information.

\subsection{Organization of the paper}

Section~\ref{sec:main-results} states the main results and introduces the
Kalman-adapted spaces. Section~\ref{sec:whole-line} proves the whole-line
observability estimate through the Fourier--Kalman analysis of the finite-time
Gramian. Section~\ref{sec:localized-observability} proves localized
observability on thick sets. Section~\ref{opti} establishes the optimality of
the Kalman-adapted weights. Section~\ref{jordann} treats the whole-line case
for matrices with real spectrum and possible Jordan blocks. Finally,
Section~\ref{sec:conclusion} discusses some perspectives, including the
localized Jordan case. The appendices collect auxiliary material on duality,
stability of the adapted spaces, admissibility estimates, and characteristic
recovery on thick sets.

\section{Main results}\label{sec:main-results}

Throughout Sections 2--5, we assume that $A\in\mathbb R^{N\times N}$ is
diagonalizable over $\mathbb R$. The real-spectrum non-diagonalizable
whole-line case is treated separately in Section~6.

This section states the main observability and controllability results. We
first introduce the Kalman-adapted spaces dictated by the low-frequency
behavior of the observation Gramian, and then state the whole-line and
localized observability estimates, followed by their controllability
consequences.

We consider the controlled system \eqref{eq:controlled-system} and its adjoint
\eqref{adjoint-system}. The observation set is either $\mathcal O=\mathbb R$ or
a proper thick subset of $\mathbb R$. Orthogonality and projectors are always
understood with respect to the Euclidean inner product on $\mathbb R^N$.

We assume that the pair $(A,B)$ satisfies the Kalman rank condition. Namely,
$K$ denotes the smallest integer such that
\begin{equation}\label{Kalman-condition}
\operatorname{rank}(B,AB,\dots,A^K B)=N.
\end{equation}
\subsection{Kalman layers and the observation space}
\label{subsec:kalman-layers-observation-space}
We use the Kalman filtration 
\[
\mathcal K_k:=\operatorname{Ran}(B,AB,\dots,A^kB),
\qquad
\mathcal K_{-1}:=\{0\},
\]
and the associated orthogonal layers
\[
\mathcal V_k:=\mathcal K_k\cap\mathcal K_{k-1}^{\perp},
\qquad k=0,\dots,K.
\]
Since \(\mathcal K_K=\mathbb R^N\), one has
$\mathbb R^N=\bigoplus_{k=0}^K\mathcal V_k.$ These elementary Kalman facts are proved in Lemma~\ref{lem:kalman} below.
Let \(\Pi_k\) denote the orthogonal projector onto \(\mathcal V_k\). The projectors \(\Pi_k\) are associated with the Kalman filtration. They are not
spectral projectors of \(A^\top\), and in general they do not commute with
\(A^\top\). Then 
\[
I_N=\sum_{k=0}^K\Pi_k,
\qquad
B^\top(A^\top)^\ell\Pi_k=0
\quad\text{for } \ell<k.
\]
Moreover, \(B^\top(A^\top)^k\) is injective on \(\mathcal V_k\). Thus
\(\mathcal V_k\) is the layer first detected at algebraic depth \(k\).
For \(k=0,\dots,K\), set
\[
\rho_k(\xi):=\min\{|\xi|^{2k},1\}.
\]
These weights encode the precise low-frequency loss detected by the finite-time Gramian. Namely, the weight on \(\mathcal V_k\) is not an external regularity
assumption, but the first non-vanishing order at which
\(B^\top e^{is\xi A^\top}\) detects that layer near \(\xi=0\).

Let \(\mathcal S(\mathbb R;\mathbb C^N)\) denote the Schwartz space of
smooth rapidly decaying functions from \(\mathbb R\) to \(\mathbb C^N\).
All projectors \(\Pi_k\) are extended complex-linearly to \(\mathbb C^N\).
For \(q\in \mathcal S(\mathbb R;\mathbb C^N)\), we set
\begin{equation}\label{def:YminusK}
\|q\|_{\mathcal Y_K^{-}}^2
:=
\sum_{k=0}^K
\int_{\mathbb R}
\rho_k(\xi)\,|\Pi_k\widehat q(\xi)|^2\,d\xi .
\end{equation}
We define \(\mathcal Y_K^-\) as the Hilbert completion of
\(\mathcal S(\mathbb R;\mathbb C^N)\) for the norm
\eqref{def:YminusK}. Elements of \(\mathcal Y_K^-\) are therefore
understood as equivalence classes in this completion. Equivalently,
\(\mathcal Y_K^-\) can be identified on the Fourier side with the
weighted Hilbert space of families
\((\Pi_k\widehat q)_{0\leq k\leq K}\) satisfying the integrability condition
in \eqref{def:YminusK}. In particular, \(\mathcal Y_K^-\) should be viewed
as a Kalman-weighted observation space, and not as the standard
\(L^2(\mathbb R)^N\) space.

With this identification, if \(P_{\leq 1}\) and \(P_{>1}\) denote smooth
Fourier cutoffs adapted respectively to low and high frequencies, and if
\(|D_x|\) denotes the Fourier multiplier with symbol \(|\xi|\), then
\[
\|q\|_{\mathcal Y_K^-}^2
\simeq
\sum_{k=0}^K
\left(
\|P_{\leq 1}|D_x|^k\Pi_k q\|_{L^2(\mathbb R;\mathbb C^N)}^2
+
\|P_{>1}\Pi_k q\|_{L^2(\mathbb R;\mathbb C^N)}^2
\right).
\]
Thus the component \(\Pi_k q\) is measured through its \(k\)-th low-frequency
derivative, while its high-frequency part is measured directly in \(L^2\).
On deeper Kalman layers, the norm involves higher powers of the low-frequency
multiplier, while no additional high-frequency regularity is imposed.

The corresponding dual space, with respect to the \(L^2\)-Fourier pairing, is
denoted by \(\mathcal Y_K^+\). Equivalently,
\begin{equation}\label{def:YplusK}
\|y\|_{\mathcal Y_K^+}^2
:=
\sum_{k=0}^K
\int_{\mathbb R}
\rho_k(\xi)^{-1}|\Pi_k\widehat y(\xi)|^2\,d\xi<\infty .
\end{equation}
Here \(\rho_k(\xi)^{-1}\) is understood for \(\xi\neq0\), with an arbitrary
value assigned at \(\xi=0\), which does not affect the integral.
For \(k\geq1\), this multiplier is singular as \(\xi\to0\).

Since Fourier transforms are identified up to sets of measure zero, the value
of the multiplier at the single point \(\xi=0\) is irrelevant; what matters is
its behavior as \(\xi\to0\). In physical variables,
\[
\|y\|_{\mathcal Y_K^+}^2
\simeq
\sum_{k=0}^K
\left(
\|P_{\leq 1}|D_x|^{-k}\Pi_k y\|_{L^2(\mathbb R)}^2
+
\|P_{>1}\Pi_k y\|_{L^2(\mathbb R)}^2
\right).
\]
Thus, in the dual space, the component \(\Pi_k y\) is admissible precisely when
its low-frequency primitive of order \(k\) belongs to \(L^2\), while its
high-frequency part remains in the standard \(L^2\) scale.

We shall also use the Fourier-side observability Gramian
\[
\mathcal G_T(\xi)
:=
\int_0^T
e^{-i s\xi A}
BB^\top
e^{i s\xi A^\top}\,ds.
\]

The results below are stated as observability estimates for the adjoint system.
Unless otherwise stated, adjoint solutions with terminal data in
\(\mathcal Y_K^-\) are understood by completion from smooth terminal data.
Equivalently, the homogeneous adjoint flow is first defined on
\(\mathcal S(\mathbb R;\mathbb C^N)\) and then extended continuously to
\(\mathcal Y_K^-\). The corresponding controlled solutions are understood in
the transposition sense associated with the duality between
\(\mathcal Y_K^-\) and \(\mathcal Y_K^+\).
Figure~\ref{fig:kalman} summarizes this layerwise Kalman frequency structure.
\begin{figure}[H]
\centering

\makebox[\textwidth][c]{%
\resizebox{0.49\textwidth}{!}{%
\begin{tikzpicture}[
  >=Stealth,
  node distance=1.05cm,
  every node/.style={transform shape},
  layer/.style={
    draw,
    rounded corners,
    minimum width=1.35cm,
    minimum height=0.65cm,
    align=center
  },
  obs/.style={
    draw,
    rounded corners,
    minimum width=5.4cm,
    minimum height=0.78cm,
    align=center
  },
  arr/.style={->,thick},
  lab/.style={font=\scriptsize,fill=white,inner sep=1pt},
  soft/.style={black!60}
]

\node[layer] (v0) at (0,0) {$\mathcal V_0$};
\node[layer, right=of v0] (v1) {$\mathcal V_1$};
\node[layer, right=of v1] (v2) {$\mathcal V_2$};
\node[right=0.45cm of v2] (dots) {$\cdots$};
\node[layer, right=0.45cm of dots] (vK) {$\mathcal V_K$};

\draw[arr] (v0) -- (v1);
\draw[arr] (v1) -- (v2);
\draw[arr] (v2) -- (dots);
\draw[arr] (dots) -- (vK);

\node[soft,above=0.28cm of v0] {\scriptsize order $0$};
\node[soft,above=0.28cm of v1] {\scriptsize order $1$};
\node[soft,above=0.28cm of v2] {\scriptsize order $2$};
\node[soft,above=0.28cm of vK] {\scriptsize order $K$};

\node[obs] (obsbox) at ($(v1)!0.5!(v2)+(0,-2.05)$)
{\small algebraic detection through the dynamics};

\draw[arr] (obsbox.north) .. controls +(-2.2,1.0) and +(0,-0.75) ..
node[lab,pos=0.58,left=1pt] {$B^\top$} (v0.south);

\draw[arr] (obsbox.north) .. controls +(-0.85,1.05) and +(0,-0.75) ..
node[lab,pos=0.58,left=1pt] {$B^\top A^\top$} (v1.south);

\draw[arr] (obsbox.north) .. controls +(0.45,1.05) and +(0,-0.75) ..
node[lab,pos=0.58,right=1pt] {$B^\top(A^\top)^2$} (v2.south);

\draw[arr] (obsbox.north) .. controls +(2.25,1.0) and +(0,-0.75) ..
node[lab,pos=0.56,right=1pt] {$B^\top(A^\top)^K$} (vK.south);

\node[below=0.55cm of obsbox,align=center,text width=7.1cm]
{\small
$\mathcal V_k$ first becomes visible at order $k$,
hence $\rho_k(\xi)=\min\{|\xi|^{2k},1\}$.
};

\node[anchor=north west,font=\bfseries] at (-0.85,-2.90) {(a)};

\end{tikzpicture}%
}%
\hspace{0.02\textwidth}%
\resizebox{0.49\textwidth}{!}{%
\begin{tikzpicture}[>=Stealth]

\draw[->] (0,0) -- (7.7,0) node[below] {$|\xi|$};
\draw[->] (0,0) -- (0,4.15) node[left] {$\lambda_k(\xi)$};

\def\xth{4.25}

\draw[dashed,black!45] (\xth,0) -- (\xth,3.75);
\node[black!55,anchor=south] at (\xth,3.75) {\small $\xi_0$};

\node[black!60] at (2.05,3.6) {\small low frequency};
\node[black!60] at (6.05,3.6) {\small uniform regime};

\draw[very thick] (0.32,3.15) -- (7.25,3.15);
\node[anchor=west] at (6.55,3.32) {\small $k=0$};

\draw[very thick,dashed]
(0.32,0.52) .. controls (1.55,0.72) and (3.05,1.65) .. (\xth,2.65)
-- (7.25,2.65);
\node[anchor=west] at (6.55,2.82) {\small $k=1$};

\draw[very thick,densely dashed]
(0.32,0.25) .. controls (1.55,0.34) and (3.15,1.05) .. (\xth,2.20)
-- (7.25,2.20);
\node[anchor=west] at (6.55,2.37) {\small $k=2$};

\draw[very thick,dotted]
(0.32,0.12) .. controls (1.70,0.18) and (3.30,0.68) .. (\xth,1.80)
-- (7.25,1.80);
\node[anchor=west] at (6.55,1.97) {\small $k=3$};

\node[anchor=west] at (0.35,4.00)
{\small $\lambda_k(\xi)\simeq |\xi|^{2k}$};

\node[anchor=west] at (4.75,4.00)
{\small $\lambda_k(\xi)\ge c>0$};

\node[anchor=north west,font=\bfseries] at (-0.75,0.40) {(b)};

\end{tikzpicture}%
}%
}

\caption{\textbf{ Kalman layers: low-frequency weights.}
The Kalman layer $\mathcal V_k$ becomes observable at algebraic order $k$,
which yields the low-frequency Gramian scale $|\xi|^{2k}$ and the adapted
weight $\rho_k(\xi)=\min\{|\xi|^{2k},1\}$.
Here $\lambda_k(\xi):=
\lambda_{\min}\!\left(\Pi_k\mathcal G_T(\xi)\Pi_k|_{\mathcal V_k}\right)$
denotes the smallest Gramian eigenvalue on $\mathcal V_k$.
}
\label{fig:kalman}
\end{figure}
\subsection{Whole-line and localized observability}

We now state the two main observability estimates. We use the notation introduced above: \(\mathcal O\) denotes the observation set and \(T_{\mathrm{geom}}\) the associated geometric time. In the localized case,
we also set \(p:=\#\sigma(A)\), the number of distinct eigenvalues of \(A\).

We first state the whole-line estimate, which is purely Fourier--Kalman in nature. 
We then turn to the localized estimate, where the adapted scale is combined 
with characteristic propagation and thick-set spectral inequalities.
\begin{theorem}\label{thm:global}
Assume that \(A\) is diagonalizable over \(\mathbb R\) and that \((A,B)\)
satisfies the Kalman rank condition~\eqref{Kalman-condition}. Let
\(q_T\in\mathcal Y_K^{-}\) and let \(q\) be the corresponding solution of the
adjoint system~\eqref{adjoint-system}.

If \(\mathcal O=\mathbb R\), then for every \(T>0\) there exists a constant
\(C_*>0\), depending only on \(A\), \(B\), and \(K\), such that
\begin{equation}\label{eq:obs-global}
\|q_T\|_{\mathcal Y_K^{-}}^2
\leq
C_*
\max\left\{\frac1T,\frac1{T^{2K+1}}\right\}
\int_0^T\int_{\mathbb R}
|B^\top q(t,x)|^2\,dx\,dt.
\end{equation}
Moreover, the time dependence
 $\max\left\{T^{-1},T^{-(2K+1)}\right\}$
is optimal.
\end{theorem}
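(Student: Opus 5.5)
By Plancherel, the whole-line problem reduces to a frequency-by-frequency lower bound on the Gramian. With $s=T-t$ one has $\widehat q(t,\xi)=e^{-is\xi A^\top}\widehat q_T(\xi)$ (up to the sign convention), so
\[
\int_0^T\int_{\mathbb R}|B^\top q|^2\,dx\,dt=\frac1{2\pi}\int_{\mathbb R}\langle \widetilde{\mathcal G}_T(\xi)\widehat q_T(\xi),\widehat q_T(\xi)\rangle\,d\xi ,
\]
where $\widetilde{\mathcal G}_T(\xi)=\int_0^T e^{is\xi A}BB^\top e^{-is\xi A^\top}ds$ coincides with $\mathcal G_T(-\xi)$. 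It therefore suffices to prove the matrix inequality
\[
\widetilde{\mathcal G}_T(\xi)\ \ge\ c\,\min\{T,T^{2K+1}\}\sum_{k=0}^K\rho_k(\xi)\Pi_k
\]
with $c$ depending only on $A$ and $B$. I will argue this first on Schwartz data and then extend by density.

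\textbf{Scaling.} Substituting $s=T\sigma$ gives $\widetilde{\mathcal G}_T(\xi)=T\,\widetilde{\mathcal G}_1(T\xi)$. Set $\eta=T\xi$ and $D(\eta)=\sum_k\min\{|\eta|^{2k},1\}\Pi_k$. The claim then follows from two facts.

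\emph{Step 1 (fixed-time bound).} One has $\widetilde{\mathcal G}_1(\eta)\ge c\,D(\eta)$ for all $\eta\in\mathbb R$.

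\emph{Step 2 (comparison of weights).} One has $\min\{|\eta|^{2k},1\}\ge\min\{1,T^{2k}\}\min\{|\xi|^{2k},1\}$. Indeed, for $T\ge1$ the left side dominates the right directly. For $T<1$, use $\min\{T^{2k}|\xi|^{2k},1\}\ge T^{2k}\min\{|\xi|^{2k},1\}$.

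Since the $\Pi_k$ are orthogonal and $D$ is block-diagonal, Step 2 gives $D(T\xi)\ge\min\{1,T^{2K}\}D(\xi)$. Multiplying by $T$ yields the factor $\min\{T,T^{2K+1}\}$, which is the reciprocal of the $\max$ in the statement.

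\textbf{Proof of Step 1.} This is the main obstacle.

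\emph{High frequencies, $|\eta|\ge\eta_0$.} Here $D\le I$, so I need $\widetilde{\mathcal G}_1(\eta)\ge c\,I$. For each fixed $\eta\ne0$, positive definiteness follows from Kalman: if $B^\top e^{-i\sigma\eta A^\top}v=0$ for all $\sigma$, differentiating at $\sigma=0$ gives $B^\top(A^\top)^jv=0$ for all $j$, hence $v=0$.

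Uniformity as $|\eta|\to\infty$ uses diagonalizability. Write $A^\top=\sum_\lambda\lambda P_\lambda$, so that
\[
B^\top e^{-i\sigma\eta A^\top}v=\sum_\lambda e^{-i\sigma\eta\lambda}B^\top P_\lambda v .
\]
The distinct real frequencies $\eta\lambda$ become widely separated as $|\eta|\to\infty$. An Ingham-type or direct orthogonality argument on $[0,1]$ then gives
\[
\int_0^1|B^\top e^{-i\sigma\eta A^\top}v|^2\,d\sigma\ \ge\ \tfrac12\sum_\lambda|B^\top P_\lambda v|^2-o(1)|v|^2 .
\]
The sum $\sum_\lambda|B^\top P_\lambda v|^2$ is $\gtrsim|v|^2$, by the Hautus test equivalent to Kalman: $B^\top P_\lambda v=0$ for all $\lambda$ forces $v=0$. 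Combined with continuity and compactness on $\{\eta_0\le|\eta|\le M\}$, this gives a uniform bound on $|\eta|\ge\eta_0$.

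\emph{Low frequencies, $|\eta|\le\eta_0$.} Write $v=\sum_k v_k$ with $v_k\in\mathcal V_k$, and rescale $w_k=\eta^kv_k$. Taylor expansion gives
\[
B^\top e^{-i\sigma\eta A^\top}v=\sum_j\frac{(-i\sigma\eta)^j}{j!}B^\top(A^\top)^jv .
\]
Using $B^\top(A^\top)^j\Pi_k=0$ for $j<k$, this equals
\[
\sum_k\sum_{j\ge k}\frac{(-i\sigma)^j}{j!}\eta^{j-k}B^\top(A^\top)^jw_k .
\]
This is $F_0(\sigma)w+O(|\eta|)|w|$, where $F_0(\sigma)w=\sum_k\frac{(-i\sigma)^k}{k!}B^\top(A^\top)^kw_k$.

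I then need $\int_0^1|F_0(\sigma)w|^2\,d\sigma\ge c|w|^2$. If $F_0w\equiv0$, the monomials $\sigma^k$ are linearly independent, so $B^\top(A^\top)^kw_k=0$ for every $k$, and injectivity of $B^\top(A^\top)^k$ on $\mathcal V_k$ gives $w=0$. Compactness of the unit sphere then yields the constant. Choosing $\eta_0$ small so that the $O(|\eta|)$ error is absorbed, I get $\langle\widetilde{\mathcal G}_1v,v\rangle\ge c\sum_k|\eta|^{2k}|v_k|^2$ on $|\eta|\le\eta_0$. Since $\eta_0\le1$ this is $c\,\langle D(\eta)v,v\rangle$; for the layers where the min is $1$, adjust constants via $\eta_0^{2k}$.

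\textbf{Optimality.} For $T\ge1$, I take $q_T$ with $\widehat q_T$ concentrated at high frequency in $\mathcal V_0$. The observation grows like $T\|q_T\|^2$, because $\widetilde{\mathcal G}_T\le T\|B\|^2$, while $\|q_T\|_{\mathcal Y_K^-}\simeq\|q_T\|_{L^2}$. This forces the factor $T^{-1}$.

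For $T<1$, I choose $v\in\mathcal V_K$ and $\widehat q_T=\varphi(\xi)v$ with $\varphi$ supported in $1/2\le|\xi|\le1$. The expansion above gives
\[
\langle\widetilde{\mathcal G}_T(\xi)v,v\rangle\le CT(T|\xi|)^{2K}|v|^2 ,
\]
because all terms with $j<K$ vanish. Meanwhile $\|q_T\|_{\mathcal Y_K^-}^2\simeq\|\varphi\|^2|v|^2$ on this frequency band. The ratio is therefore of order $T^{2K+1}$, matching the claimed rate.
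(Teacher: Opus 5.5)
Your proof is correct and follows essentially the paper's route: Plancherel reduction to the Gramian, Taylor--Kalman coercivity of the monomial jets at low frequency, compactness plus spectral separation with Hautus at higher frequencies, the scaling identity $\mathcal G_T(\xi)=T\mathcal G_1(T\xi)$, and test data valued in $\mathcal V_0$ and $\mathcal V_K$ for optimality. The differences are only cosmetic: you run the whole analysis at $T=1$ and then compare weights, and you use the fixed band $\tfrac12\le|\xi|\le 1$ (rather than a shrinking neighbourhood of $0$) for the $T^{-(2K+1)}$ lower bound. One small correction is that $\widetilde{\mathcal G}_T\le T\|B\|^2$ holds only up to the factor $\sup_{s,\xi}\|e^{is\xi A^\top}\|^2$, which is finite by diagonalizability but equals $1$ only when $A$ is symmetric.
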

The next result shows that the same Kalman-adapted scale remains observable
from localized measurements, provided the observation set is thick and the
observation time is larger than the geometric threshold. 
\begin{theorem}\label{thm:local}
Assume that \(A\) is diagonalizable over \(\mathbb R\), that \((A,B)\)
satisfies the Kalman rank condition~\eqref{Kalman-condition}, and that
\(0\notin\sigma(A)\). Assume also that \(\mathcal O\) satisfies the thickness
assumptions~\eqref{eq:gcc}. Let $p:=\#\sigma(A).$
Then, for every \(T_1>T_{\mathrm{geom}}\), there exists a constant
\(C_{\mathcal O,T_1}>0\), depending only on \(\mathcal O\), \(A\), \(B\),
\(K\), and \(T_1\), but independent of \(T\), such that, for every
\(T\in(T_{\mathrm{geom}},T_1]\), every \(q_T\in\mathcal Y_K^{-}\), and every
corresponding solution \(q\) of the adjoint system~\eqref{adjoint-system}, one has
\begin{equation}\label{eq:local}
\|q_T\|_{\mathcal Y_K^{-}}^2
\leq
\frac{C_{\mathcal O,T_1}}
{(T-T_{\mathrm{geom}})^{2p-1}}
\int_0^T\int_{\mathcal O}
|B^\top q(t,x)|^2\,dx\,dt.
\end{equation}
\end{theorem}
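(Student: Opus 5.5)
Theorem~\ref{thm:local} will be proved by combining three ingredients: a frequency splitting, the whole-line estimate of Theorem~\ref{thm:global}, and characteristic propagation across the gaps. Diagonalize \(A^\top=P D P^{-1}\) with distinct eigenvalues \(\mu_1,\dots,\mu_p\), all nonzero, and spectral projectors \(E_j\). Then
\[
q(t,x)=\sum_{j=1}^p E_j q_T\bigl(x-\mu_j(T-t)\bigr),
\]
so each spectral component is a rigid translate at speed \(\mu_j\) with \(|\mu_j|\ge\lambda_*\).

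\emph{Step 1: low frequencies.} Fix a cutoff \(\xi_0>0\) and split \(q_T=P_{\le\xi_0}q_T+P_{>\xi_0}q_T\). For the low-frequency part, the \(\mathcal Y_K^-\)-norm is the Kalman-weighted norm, and Theorem~\ref{thm:global} controls it by \(\int_0^T\!\int_{\mathbb R}|B^\top q_{\rm low}|^2\). To pass from \(\mathbb R\) to \(\mathcal O\), I would use the Logvinenko--Sereda/Kovrijkine spectral inequality on the \((\gamma_{\mathcal O},L_{\mathcal O})\)-thick set \(\mathcal O\), at each fixed \(t\), applied to \(B^\top q_{\rm low}(t,\cdot)\). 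This function has Fourier support in \(\{|\xi|\le \xi_0\,\|A\|\}\); in fact transport preserves Fourier support, so its support is in \([-\xi_0,\xi_0]\). The result is
\[
\int_{\mathbb R}|B^\top q_{\rm low}|^2\le C(\xi_0,\mathcal O)\int_{\mathcal O}|B^\top q_{\rm low}|^2 .
\]
This handles the low-frequency part for any \(T>0\) with a constant bounded on \((0,T_1]\). The \(T^{-(2K+1)}\) factor is harmless there since \(T>T_{\rm geom}\) is bounded below.

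\emph{Step 2: high frequencies.} Here \(\rho_k\simeq1\), so I need a plain \(L^2\) estimate \(\|P_{>\xi_0}q_T\|_{L^2}^2\lesssim (T-T_{\rm geom})^{-(2p-1)}\int_0^T\!\int_{\mathcal O}|B^\top q|^2\) modulo a compact or low-frequency remainder. The plan is characteristic recovery. Every characteristic \(x-\mu_j(T-t)\) spends time at least \((|\mu_j|(T-T_{\rm geom}))/|\mu_j|\)-proportional in \(\mathcal O\); more precisely, for every base point it meets \(\mathcal O\) during a time set of measure \(\gtrsim T-T_{\rm geom}\) (bounded-gap condition). I then need to separate the \(p\) families from the single measured combination \(B^\top\sum_j E_j q_T(x-\mu_j s)\). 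For this I would use the Kalman condition in its Hautus form, \(B^\top E_j\ne0\) on \(\operatorname{Ran}E_j\), equivalently \(B^\top\) is injective on each eigenspace of \(A^\top\). The different speeds are then decoupled by an iterated difference/Vandermonde-type argument in time: the \(p\) translates are separated by successively taking differences over short time windows of length \(\sim(T-T_{\rm geom})\). Each separation costs a factor \((T-T_{\rm geom})^{-2}\) and the final time integration gains one power, which accounts for the exponent \(2p-1\). At high frequency the differences \(e^{i\xi\mu_j s}-e^{i\xi\mu_l s}\) are nondegenerate in the mean, which is why the \(P_{>\xi_0}\) restriction is needed. The appendix on characteristic recovery on thick sets presumably packages exactly this.

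\emph{Step 3: gluing.} The observation is of the full \(q\), not of the pieces separately, so cross terms appear. I would write \(\|B^\top q\|^2_{L^2(\mathcal O)}\ge\frac12\|B^\top q_{\rm high}\|^2-\|B^\top q_{\rm low}\|^2\), and symmetrically. The low-frequency part is absorbed using that it is controlled in \(L^2((0,T)\times\mathcal O)\) through the admissibility (direct) inequality. Choosing \(\xi_0\) large, the low/high interaction on \(\mathcal O\) becomes a small perturbation.

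The main obstacle is Step 2, obtaining the high-frequency separation of the \(p\) characteristic families with the explicit rate \((T-T_{\rm geom})^{-(2p-1)}\) uniformly in \(T\in(T_{\rm geom},T_1]\). Step 3 carries a secondary risk, since absorbing the cross terms requires the spectral inequality constant, which grows like \(e^{C\xi_0 L_{\mathcal O}}\), to be balanced against the smallness of the high-frequency leakage.
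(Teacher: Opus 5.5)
There is a genuine gap, in Step~3, and your Step~2 is too vague to repair it. Step~1 matches the paper's low-frequency argument (whole-line Kalman estimate plus Logvinenko--Sereda on $B^\top q_{\rm low}(t,\cdot)$).

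\textbf{Why the gluing cannot close.} Write $O_T$ for the localized observation functional, $X_L=\|P_{\le\xi_0}q_T\|_{\mathcal Y_K^-}^2$ and $X_H=\|P_{>\xi_0}q_T\|_{L^2}^2$. Steps~1--2 give $X_L\le C_{LF}\,O_T(q_{\rm low})$ and $X_H\le C_{HF}\,O_T(q_{\rm high})$. Admissibility gives $O_T(q_{\rm low})\le C_{\rm adm}X_L$ and $O_T(q_{\rm high})\le C_{\rm adm}X_H$. Hence necessarily $C_{LF}C_{\rm adm}\ge1$ and $C_{HF}C_{\rm adm}\ge1$.

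Now feed $O_T(q_{\rm high})\le 2O_T(q)+2O_T(q_{\rm low})$ and its mirror image into each other. This leaves $X_H$ on the right-hand side with coefficient $4C_{HF}C_{LF}C_{\rm adm}^2\ge4$, so nothing can be absorbed.

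You would need a genuinely small bound on the cross term $\int_0^T\!\int_{\mathcal O}B^\top q_{\rm low}\cdot\overline{B^\top q_{\rm high}}$. Taking $\xi_0$ large does not give one. Restriction to $(0,T)\times\mathcal O$ destroys Fourier orthogonality. Multiplication by the non-constant occupation weight of the characteristics moves an $O(1)$ fraction of the mass of data near $|\xi|=\xi_0$ across the sharp cutoff, whatever $\xi_0$ is. Meanwhile $C_{LS}$ grows like $e^{C\xi_0}$. Falling back on a ``compact remainder'' in Step~2 would mean compactness--uniqueness, which loses the explicit rate $(T-T_{\rm geom})^{-(2p-1)}$ and the uniformity in $T\in(T_{\rm geom},T_1]$.

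\textbf{The missing idea.} Prove the high-frequency bound with the observation of the \emph{full} solution on the right:
$\|P_{\ge\rho}q_T\|_{L^2}^2\le C(T-T_{\rm geom})^{-(2p-1)}O_T(q)$.
Then the gluing is triangular rather than circular, with $\rho=1$ fixed:
$\|P_{\le1}q_T\|_{\mathcal Y_K^-}^2\le 2\Gamma_{LF}O_T(q)+2\Gamma_{LF}C_{\rm adm}^2\|P_{>1}q_T\|_{L^2}^2$,
and the last term is already controlled.

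This requires separating the families by operations that are local in $U_T=(0,T)\times\mathcal O$, applied to the whole of $B^\top q$, with every Fourier argument postponed until you are back on $\mathbb R$. The paper does this as follows.
\begin{enumerate}
\item It applies $\mathcal D_\lambda=\prod_{\nu\ne\lambda}(\partial_t+\nu\partial_x)$, which maps $L^2(U_T)$ into $H^{-(p-1)}(U_T)$ and isolates $c_\lambda B^\top\partial_x^{p-1}q_T^{(\lambda)}(x+\lambda(T-t))$.
\item The negative-Sobolev characteristic recovery of Lemma~\ref{lem:quantitative}, with loss $(T-T_{\rm geom})^{-(p-1/2)}$, transfers this to $H^{-(p-1)}(\mathbb R)$.
\item Only then does it use $P_{\ge\rho}$, Hautus injectivity on each $E_\lambda$, and inversion of $\partial_x^{p-1}$ on $\{|\xi|\ge\rho\}$.
\end{enumerate}

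Your Step~2 does not supply this mechanism. Pure time differences annihilate no family; you would need differences along the directions $(1,\nu)$, on a subset of $U_T$ shrunk so that the shifted points stay inside. The ``nondegeneracy in the mean'' of $e^{i\xi\mu_js}-e^{i\xi\mu_ls}$ is a whole-line Fourier fact and is not available on $\mathcal O$. A finite-difference analogue of $\mathcal D_\lambda$, averaged over steps $h\lesssim T-T_{\rm geom}$, might reproduce the rate, but only if it is organized in this way.

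Minor: with $q_t+A^\top q_x=0$, the translates are $E_jq_T(x+\mu_j(T-t))$, not $E_jq_T(x-\mu_j(T-t))$.
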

\begin{remark}
The exponent \(2p-1\) in \eqref{eq:local} is not claimed to be optimal. It is
the loss produced by the finite-order differential separation argument. Thus,
for every \(T_1>T_{\mathrm{geom}}\), the estimate only provides the locally
uniform upper bound
\[
C_{\mathrm{obs}}(T)
\lesssim_{\,\mathcal O,A,B,K,T_1}
(T-T_{\mathrm{geom}})^{-(2p-1)},
\qquad T\in(T_{\mathrm{geom}},T_1],
\]
and this should not be interpreted as a sharp asymptotic law for the
observability cost.

The sharp features of the localized result are instead the condition
\(T>T_{\mathrm{geom}}\) and the Kalman-adapted scale \(\mathcal Y_K^{-}\). If
\(T<T_{\mathrm{geom}}\), high-frequency wave packets may concentrate along a
nonzero characteristic staying inside an unobserved gap, so uniform localized
observability on the full space \(\mathcal Y_K^{-}\) cannot hold in general.
The assumption \(0\notin\sigma(A)\) excludes stationary modes trapped in such
gaps.

For spectrally localized terminal data, this obstruction disappears. For
instance, if
\[
\operatorname{supp}\widehat q_T\subset [-R,R],
\]
then localized observability may hold for every \(T>0\), with a constant
depending on \(T\), \(R\), and the thickness parameters of \(\mathcal O\); see
Proposition~\ref{prop:bounded-frequency-localization}. This does not improve
the uniform geometric threshold, since the obstruction below
\(T_{\mathrm{geom}}\) is a high-frequency one.
\end{remark}

\begin{corollary}\label{equivalent-norm}
Assume the hypotheses of Theorem~\ref{thm:local}. Let
\(T_1>T_{\mathrm{geom}}\) and let \(T\in(T_{\mathrm{geom}},T_1]\). Then there exist constants
\(\mathcal C_T,C_{\mathcal O,T_1}>0\), with \(C_{\mathcal O,T_1}\) independent of \(T\), such that, for every

\(q_T\in\mathcal Y_K^{-}\),
\[
\frac{(T-T_{\mathrm{geom}})^{2p-1}}{C_{\mathcal O,T_1}}
\|q_T\|_{\mathcal Y_K^{-}}^2
\leq
\int_0^T\int_{\mathcal O}|B^\top q(t,x)|^2\,dx\,dt
\leq
\mathcal C_{T}\|q_T\|_{\mathcal Y_K^{-}}^2 .
\]
Thus, for every \(T>T_{\mathrm{geom}}\), the localized observation functional
induces a norm equivalent to the Kalman-adapted norm on \(\mathcal Y_K^{-}\).
\end{corollary}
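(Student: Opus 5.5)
The lower bound is exactly Theorem~\ref{thm:local}: rearranging \eqref{eq:local} gives the left inequality with the same constant \(C_{\mathcal O,T_1}\), independent of \(T\in(T_{\mathrm{geom}},T_1]\). So the work is the upper bound, an admissibility (direct) estimate
\[
\int_0^T\int_{\mathcal O}|B^\top q|^2\,dx\,dt\le \mathcal C_T\|q_T\|_{\mathcal Y_K^-}^2 .
\]
Since \(\mathcal O\subset\mathbb R\), the localized observation is bounded by the global one. It therefore suffices to bound \(\int_0^T\int_{\mathbb R}|B^\top q|^2\) by \(\|q_T\|^2_{\mathcal Y_K^-}\). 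We do this on Fourier data of Schwartz terminal states and then extend by density, using the completion defining \(\mathcal Y_K^-\).

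By Plancherel, writing \(s=T-t\), we have \(\widehat q(t,\xi)=e^{is\xi A^\top}\widehat q_T(\xi)\) up to the sign convention. Hence
\[
\int_0^T\!\!\int_{\mathbb R}|B^\top q|^2=\frac1{2\pi}\int_{\mathbb R}\int_0^T|B^\top e^{is\xi A^\top}\widehat q_T(\xi)|^2\,ds\,d\xi .
\]
Decompose \(\widehat q_T=\sum_k\Pi_k\widehat q_T\). By Cauchy--Schwarz with \(K+1\) terms, it is enough to show, for each \(k\) and all \(s\in[0,T]\),
\[
|B^\top e^{is\xi A^\top}\Pi_k v|^2\le C_T\,\rho_k(\xi)|\Pi_kv|^2 .
\]

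\emph{High frequencies, \(|\xi|\ge1\).} Since \(A\) is diagonalizable over \(\mathbb R\), we have \(A^\top=P D P^{-1}\) with \(D\) real diagonal. Then \(\|e^{is\xi A^\top}\|\le\|P\|\|P^{-1}\|\) uniformly in \(s\) and \(\xi\), and \(\rho_k=1\) there.

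\emph{Low frequencies, \(|\xi|<1\).} Taylor expand \(e^{is\xi A^\top}\Pi_k=\sum_{\ell\ge0}\frac{(is\xi)^\ell}{\ell!}(A^\top)^\ell\Pi_k\). By the Kalman relation \(B^\top(A^\top)^\ell\Pi_k=0\) for \(\ell<k\) (Lemma~\ref{lem:kalman}), the first \(k\) terms vanish after applying \(B^\top\). The remainder is controlled by the Taylor remainder formula:
\[
|B^\top e^{is\xi A^\top}\Pi_kv|\le \|B\|\frac{(s|\xi|)^k}{k!}\sup_{\tau\in[0,1]}\|(A^\top)^ke^{i\tau s\xi A^\top}\|\,|\Pi_kv| .
\]
Diagonalizability bounds the supremum uniformly, so we get \(\lesssim T^k|\xi|^k|\Pi_kv|\). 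Squaring gives the weight \(|\xi|^{2k}=\rho_k(\xi)\).

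Integrating in \(s\) and \(\xi\) yields the upper bound with \(\mathcal C_T\lesssim T\max\{1,T^{2K}\}\), depending on \(A,B,K\).

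The only delicate point is making the direct estimate meaningful on the completion \(\mathcal Y_K^-\). Elements of the completion need not be \(L^2\) functions, since the low-frequency weights are degenerate. However, the bound just proved shows that the map \(q_T\mapsto B^\top q|_{(0,T)\times\mathcal O}\) is continuous from \((\mathcal S,\|\cdot\|_{\mathcal Y_K^-})\) into \(L^2\). It therefore extends uniquely, and both inequalities pass to the limit. Norm equivalence then follows for every \(T>T_{\mathrm{geom}}\): choose \(T_1=T\).
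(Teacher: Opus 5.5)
Your proposal is correct and follows essentially the same route as the paper. The lower bound is read off from \eqref{eq:local}. The upper bound comes from enlarging $\mathcal O$ to $\mathbb R$, applying Plancherel, and proving the layerwise bound $\|B^\top e^{is\xi A^\top}\Pi_k\|\le \mathcal C_T\,\rho_k(\xi)^{1/2}$. That bound uses the Kalman cancellations $B^\top(A^\top)^j\Pi_k=0$ for $j<k$ together with Taylor's formula at low frequency, and the uniform boundedness of the diagonalizable group at high frequency. Your integral-form remainder bound, the explicit constant, and the density extension to the completion $\mathcal Y_K^-$ only make precise steps the paper states more briefly.
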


\begin{proof}
The left-hand inequality is the localized observability estimate
\eqref{eq:local}. The right-hand inequality is the admissibility estimate
proved in Appendix~\ref{app:norm-equivalence}.
\end{proof}

\subsection{Controllability consequences}

By the Hilbert Uniqueness Method, the observability estimates above yield the
following exact controllability results.

\begin{corollary}\label{cor:control}
Assume that $A$ is diagonalizable over $\mathbb R$ and that $(A,B)$ satisfies
the Kalman rank condition. Let $\mathcal Y_K^+$ denote the dual space
associated with $\mathcal Y_K^-$ through the $L^2$-Fourier pairing. Then the
controlled system \eqref{eq:controlled-system} is exactly controllable in
$\mathcal Y_K^+$ in the following sense.

\smallskip
\noindent\textbf{(i) Global case.}
Let $\mathcal O=\mathbb R$. For every $T>0$ and every
$y_0,y_d\in\mathcal Y_K^{+}$, there exists
$u\in L^2((0,T)\times\mathbb R;\mathbb R^m)$ such that the corresponding solution satisfies
$y(T,\cdot)=y_d.$
Moreover,
\[
\|u\|_{L^2((0,T)\times\mathbb R;\mathbb R^m)}^2
\leq
\widetilde C_* \max\left\{T^{-1},T^{-(2K+1)}\right\}
\|y_d-S(T)y_0\|_{\mathcal Y_K^{+}}^2,
\]
where $S(T)$ denotes the free evolution.

\smallskip
\noindent\textbf{(ii) Localized case.}
Assume in addition that $0\notin\sigma(A)$ and that $\mathcal O$ satisfies
the thickness assumptions~\eqref{eq:gcc}. Let \(T_1>T_{\mathrm{geom}}\). Then
there exists a constant \(\widetilde C_{\mathcal O,T_1}>0\) such that, for every
\(T\in(T_{\mathrm{geom}},T_1]\) and every
$y_0,y_d\in\mathcal Y_K^{+}$, there exists
$u\in L^2((0,T)\times\mathcal O;\mathbb R^m)$ such that
$y(T,\cdot)=y_d.$
Moreover,
\[
\|u\|_{L^2((0,T)\times\mathcal O;\mathbb R^m)}^2
\leq
\frac{\widetilde C_{\mathcal O,T_1}}
{(T-T_{\mathrm{geom}})^{2p-1}}
\|y_d-S(T)y_0\|_{\mathcal Y_K^{+}}^2.
\]
\end{corollary}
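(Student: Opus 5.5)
The plan is to derive Corollary~\ref{cor:control} from Theorems~\ref{thm:global} and~\ref{thm:local} through the Hilbert Uniqueness Method, carried out in the duality between \(\mathcal Y_K^-\) and \(\mathcal Y_K^+\).

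\emph{Reduction.} By linearity, \(y(T)=S(T)y_0+L_Tu\), where
\[
L_T u:=\int_0^T S(T-t)B\chi_{\mathcal O}u(t)\,dt .
\]
It therefore suffices to show that \(L_T\) maps \(L^2((0,T)\times\mathcal O;\mathbb R^m)\) onto \(\mathcal Y_K^+\), with a right inverse whose norm satisfies the stated bound. The target to reach is \(z:=y_d-S(T)y_0\); note that \(S(T)\) preserves \(\mathcal Y_K^+\).

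\emph{Adjoint identity.} Fix \(q_T\in\mathcal S(\mathbb R;\mathbb C^N)\) and let \(q\) be the adjoint solution. Integration by parts in \((t,x)\) gives
\[
\langle L_Tu,q_T\rangle=\int_0^T\int_{\mathcal O}u\cdot B^\top q\,dx\,dt ,
\]
where the left bracket is the \(L^2\)-Fourier pairing. Define the observation operator \(\Lambda_T q_T:=\chi_{\mathcal O}B^\top q\). This operator is bounded from \(\mathcal Y_K^-\) to \(L^2\): in the localized case this is the admissibility estimate of Appendix~\ref{app:norm-equivalence}, and on the whole line it follows from Plancherel together with the bound
\[
\mathcal G_T(\xi)\lesssim_T \sum_k\rho_k(\xi)\Pi_k .
\]
That bound holds because \(B^\top e^{is\xi A^\top}\Pi_k=O(|\xi|^k)\) for small \(\xi\), using \(B^\top(A^\top)^\ell\Pi_k=0\) for \(\ell<k\), and \(e^{is\xi A^\top}\) is uniformly bounded since \(A\) is diagonalizable with real spectrum. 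By density of \(\mathcal S\) in \(\mathcal Y_K^-\) we then get \(L_T=\Lambda_T^*\) once \(\mathcal Y_K^+\) is identified with \((\mathcal Y_K^-)'\). This identification holds because the pairing is block-diagonal in the \(\Pi_k\), which makes \(\rho_k^{-1}\) exactly dual to \(\rho_k\); this is the duality material of the appendix.

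\emph{HUM functional.} For a given \(z\in\mathcal Y_K^+\), consider
\[
J(q_T)=\tfrac12\|\Lambda_Tq_T\|_{L^2}^2-\langle z,q_T\rangle
\qquad\text{on }\mathcal Y_K^- .
\]
The functional is continuous and convex. It is coercive by \eqref{eq:obs-global} or \eqref{eq:local}: writing \(C_{\rm obs}(T)\) for the observability constant, we have
\[
\|q_T\|^2_{\mathcal Y_K^-}\le C_{\rm obs}(T)\|\Lambda_Tq_T\|^2 .
\]
Hence \(J\) has a minimizer \(\hat q_T\). Setting \(u:=\Lambda_T\hat q_T\), the Euler--Lagrange equation gives \(\langle L_Tu,q_T\rangle=\langle z,q_T\rangle\) for all \(q_T\), that is, \(L_Tu=z\). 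Testing with \(q_T=\hat q_T\) yields
\[
\|u\|^2=\langle z,\hat q_T\rangle\le\|z\|_{\mathcal Y_K^+}\|\hat q_T\|_{\mathcal Y_K^-}\le\|z\|_{\mathcal Y_K^+}\,C_{\rm obs}(T)^{1/2}\|u\| ,
\]
so that \(\|u\|^2\le C_{\rm obs}(T)\|z\|^2_{\mathcal Y_K^+}\). Inserting \(C_{\rm obs}\) from Theorem~\ref{thm:global} gives part (i), and inserting \(C_{\mathcal O,T_1}(T-T_{\rm geom})^{-(2p-1)}\) from Theorem~\ref{thm:local} gives part (ii).

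To obtain a real-valued control, I note that \(A\) and \(B\) are real, so the minimization can be carried out on real-valued data, or one can take real parts. Also, \(u=\chi_{\mathcal O}B^\top\hat q\) is supported in \(\mathcal O\).

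The main obstacle is the functional setting. One must make the transposition solution and the identity \(L_T=\Lambda_T^*\) rigorous when \(\mathcal Y_K^+\) contains elements whose low frequencies are singular (weight \(\rho_k^{-1}\)), and check that the state \(y(T)\) indeed lies in \(\mathcal Y_K^+\). I would handle this entirely on the Fourier side, working blockwise in the \(\Pi_k\) and using admissibility to define \(L_T\) by continuity.
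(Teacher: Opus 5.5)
Your proposal is correct and is essentially the paper's argument. It is HUM duality between $\mathcal Y_K^-$ and $\mathcal Y_K^+$, built from the admissibility bound of Appendix~\ref{app:norm-equivalence}, the observability estimates of Theorems~\ref{thm:global} and~\ref{thm:local}, and the invariance of $\mathcal Y_K^+$ under $S(T)$; you only assert that invariance, whereas the paper proves it in Lemma~\ref{bounded} from the triangular relations $\Pi_kA^m\Pi_j=0$ for $m<k-j$.

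The one real difference is that you realize the duality by minimizing the HUM functional rather than through the closed range theorem of Proposition~\ref{prop:abstract-HUM}. Your route has the advantage of producing the stated cost bound $\|u\|^2\le C_{\rm obs}(T)\,\|y_d-S(T)y_0\|_{\mathcal Y_K^+}^2$ explicitly. Note also that $\rho_k^{-1}\ge 1$ gives $\mathcal Y_K^+\hookrightarrow L^2(\mathbb R)^N$. So the delicate functional setting you flag concerns only $\mathcal Y_K^-$, and the primal state is an ordinary $L^2$ mild solution.
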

\begin{remark}\label{rem:control-space}
In Corollary~\ref{cor:control}, the controllability space is
$\mathcal Y_K^+$, not $L^2(\mathbb R)^N$. This is the dual manifestation of
the low-frequency degeneracy of the observation Gramian. The value of the
weight at $\xi=0$ is irrelevant; only its behavior near zero matters. The
optimality of the layerwise weights is established in Section~\ref{opti}.
\end{remark}
\subsection{An Euler-type illustration of a depth-two Kalman filtration}
\label{subsec:3d-euler-example}

We illustrate the Kalman-adapted scale on a simple Euler-type linearized
system. Such matrices arise, for instance, from the primitive-variable form of
the one-dimensional compressible Euler equations after linearization around a
constant state; see \cite{LeVeque2002,Dafermos2016}. Consider
\[
A=
\begin{pmatrix}
2&1&0\\
1&2&1\\
0&0&2
\end{pmatrix},
\qquad
\sigma(A)=\{1,2,3\},
\]
and the adjoint system
\[
\partial_t q+A^\top\partial_x q=0,\qquad q=(q_1,q_2,q_3)^\top .
\]
We observe only the third adjoint component,
\[
B=e_3,\qquad B^\top q=q_3.
\]
Since
\[
B=e_3,\qquad
AB=(0,1,2)^\top,\qquad
A^2B=(1,4,4)^\top
\]
form a basis of \(\mathbb R^3\), the Kalman condition holds with index \(K=2\).
The associated Kalman filtration is
\[
\mathcal K_0=\operatorname{span}\{e_3\},\qquad
\mathcal K_1=\operatorname{span}\{e_2,e_3\},\qquad
\mathcal K_2=\mathbb R^3,
\]
and hence
\[
\mathcal V_0=\operatorname{span}\{e_3\},\qquad
\mathcal V_1=\operatorname{span}\{e_2\},\qquad
\mathcal V_2=\operatorname{span}\{e_1\}.
\]
A direct computation gives detection orders \(0,1,2\) on
\(\mathcal V_0,\mathcal V_1,\mathcal V_2\), respectively. Therefore the
low-frequency Gramian scale is
\[
\mathcal V_0:\ 1,\qquad
\mathcal V_1:\ |\xi|^2,\qquad
\mathcal V_2:\ |\xi|^4 .
\]
Equivalently, the adapted norm is
\[
\|q\|_{Y_2^-}^2
=
\int_{\mathbb R}|\widehat q_3(\xi)|^2\,d\xi
+
\int_{\mathbb R}\min\{|\xi|^2,1\}|\widehat q_2(\xi)|^2\,d\xi
+
\int_{\mathbb R}\min\{|\xi|^4,1\}|\widehat q_1(\xi)|^2\,d\xi .
\]
Thus the observed adjoint direction is \(e_3\), while \(e_2\) and \(e_1\) are
recovered after one and two algebraic couplings, respectively.

\section{Whole-line observability: Fourier reduction and frequency regimes}
\label{sec:whole-line}

We prove the whole-line observability estimate by working frequency by
frequency. The proof is divided into a low-frequency estimate, based on the
Kalman layer decomposition, and a nonzero-frequency estimate, obtained by
compactness on bounded annuli and spectral separation at high frequencies.

\subsection{Low-frequency Gramian asymptotics and Kalman layer structure}
\label{lowfrequency}
\paragraph{\textbf{Step 1: Fourier reduction and observability Gramian.}}
Taking the Fourier transform in $x$ of the adjoint system gives, for each
$\xi\in\mathbb R$,
\[
\partial_t \widehat q+i\xi A^\top\widehat q=0,
\quad
\widehat q(T,\xi)=\widehat q_T(\xi).
\quad
\text{Hence}
\quad
\widehat q(t,\xi)
=
e^{i\xi(T-t)A^\top}\widehat q_T(\xi).
\]
Here $\widehat q(t,\xi)\in\mathbb C^N$, and all real matrices are extended to
$\mathbb C^N$ by complex linearity.

With the change of variable $s=T-t$, define the Hermitian observability
Gramian
\begin{equation}\label{eq:gramian}
\mathcal G_T(\xi)
:=
\int_0^T
e^{-is\xi A}
BB^\top
e^{is\xi A^\top}
\,ds .
\end{equation}
Equivalently, for every $z\in\mathbb C^N$,
\begin{equation}\label{eq:gramiann}
z^*\mathcal G_T(\xi)z
=
\int_0^T
\|B^\top e^{is\xi A^\top}z\|_{\mathbb C^m}^2\,ds
\geq 0 .
\end{equation}
Thus $\mathcal G_T(\xi)$ is Hermitian positive semidefinite. Moreover, the map
$\xi\mapsto \mathcal G_T(\xi)$ is continuous.

\medskip
\paragraph{\textbf{Step 2: A finite-dimensional coercivity.}}
We recall the following standard finite-dimensional fact.
Let $p_0,\dots,p_K\in L^2(0,1)$ be linearly independent and define the Gram matrix
\[
M_{k\ell}:=\int_0^1 p_k(\tau)\,\overline{p_\ell(\tau)}\,d\tau,
\qquad 0\le k,\ell\le K .
\]
Then $M$ is Hermitian positive definite.
Fix the Hermitian inner product on $\C^m$ as $\langle v,w\rangle:=v^*w$ and set
\[
U = (\tilde u_0,\tilde u_1,\dots,\tilde u_K)^\top \in \C^{m(K+1)}.
\]
Then one has the exact identity
\[
\int_0^1\Big\|\sum_{k=0}^K p_k(\tau)\tilde u_k\Big\|_{\C^m}^2\,d\tau
= U^*(M\otimes I_m)U,
\]
and therefore
\begin{equation}\label{gramm}
\int_0^1\Big\|\sum_{k=0}^K p_k(\tau)\tilde u_k\Big\|_{\C^m}^2\,d\tau
\ge \lambda_{\min}(M)\sum_{k=0}^K\|\tilde u_k\|_{\C^m}^2 .
\end{equation}
We shall apply this estimate to the leading Kalman jets.
\paragraph{\textbf{Step 3: Kalman decomposition and coercivity on each layer.}}
\begin{lemma}\label{lem:kalman}
Let \(\mathcal V_k\) be the Kalman layers defined in
Section~\ref{subsec:kalman-layers-observation-space}. If the Kalman rank condition holds with index \(K\), then
\[
\mathbb R^N=\bigoplus_{k=0}^K\mathcal V_k,
\quad
B^\top(A^\top)^k\big|_{\mathcal V_k}\ \text{is injective for } k=0,\dots,K.
\] Consequently, there exists \(c_k>0\) such that
\begin{equation}\label{eq:ck-single}
\|B^\top(A^\top)^k v\|_{\mathbb R^m}
\ge c_k\|v\|_{\mathbb R^N},
\qquad v\in\mathcal V_k .
\end{equation}
\end{lemma}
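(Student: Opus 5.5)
The plan is to derive all three claims from the nested structure of the filtration \(\mathcal K_{-1}\subset\mathcal K_0\subset\cdots\subset\mathcal K_K=\mathbb R^N\) together with the duality \(\ker(M^\top)=(\operatorname{Ran}M)^\perp\).

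\emph{Direct sum.} Since \(\mathcal K_{k-1}\subset\mathcal K_k\), the space \(\mathcal V_k=\mathcal K_k\cap\mathcal K_{k-1}^\perp\) is the orthogonal complement of \(\mathcal K_{k-1}\) inside \(\mathcal K_k\). Hence \(\mathcal K_k=\mathcal K_{k-1}\oplus\mathcal V_k\) orthogonally. Inducting from \(\mathcal K_{-1}=\{0\}\) gives \(\mathcal K_k=\bigoplus_{j\le k}\mathcal V_j\), with mutually orthogonal summands. Taking \(k=K\) and using the Kalman condition \(\mathcal K_K=\mathbb R^N\) yields \(\mathbb R^N=\bigoplus_{k=0}^K\mathcal V_k\). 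The same decomposition shows \(I_N=\sum_k\Pi_k\).

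\emph{Vanishing of lower jets.} Let \(v\in\mathcal V_k\) and \(\ell<k\). For every \(w\in\mathbb R^m\) we have \(A^\ell Bw\in\mathcal K_\ell\subset\mathcal K_{k-1}\), so
\[
\langle B^\top(A^\top)^\ell v,w\rangle=\langle v,A^\ell Bw\rangle=0 .
\]
Therefore \(B^\top(A^\top)^\ell v=0\). This is the relation \(B^\top(A^\top)^\ell\Pi_k=0\) quoted in Section~\ref{subsec:kalman-layers-observation-space}.

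\emph{Injectivity.} Suppose \(v\in\mathcal V_k\) satisfies \(B^\top(A^\top)^kv=0\). By the same pairing, \(v\perp\operatorname{Ran}(A^kB)\). By the previous step, \(v\perp\operatorname{Ran}(A^\ell B)\) for all \(\ell<k\). Hence \(v\perp\mathcal K_k\). But \(v\in\mathcal K_k\), so \(v=0\).

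\emph{Quantitative bound.} The map \(v\mapsto\|B^\top(A^\top)^kv\|\) is continuous and, by injectivity, strictly positive on the unit sphere of \(\mathcal V_k\). This sphere is compact, because \(\mathcal V_k\) is finite dimensional and nonzero. So the map attains a positive minimum \(c_k\), and homogeneity then gives \eqref{eq:ck-single}. If some \(\mathcal V_k\) happened to be \(\{0\}\), the estimate would be vacuous. This cannot occur for \(k\le K\) by minimality of the Kalman index: once \(\mathcal K_k=\mathcal K_{k-1}\), the filtration stabilizes, because \(A\mathcal K_{k-1}\subset\mathcal K_k\).

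Equivalently, \(c_k\) is the smallest singular value of \(B^\top(A^\top)^k\) restricted to \(\mathcal V_k\). Complexification extends all three statements to \(\mathbb C^N\) with the same constant.

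There is no real obstacle here. The only point that needs care is the vanishing-of-lower-jets step, since it is what makes the orthogonal complement interact correctly with the transpose.
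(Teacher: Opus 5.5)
Your proof is correct and follows essentially the paper's argument: the duality $\ker\bigl(B^\top(A^\top)^\ell\bigr)=(\operatorname{Ran}A^\ell B)^\perp$ kills the lower jets on $\mathcal K_{k-1}^\perp$, so a kernel element of $B^\top(A^\top)^k$ in $\mathcal V_k$ lies in $\mathcal K_k\cap\mathcal K_k^\perp=\{0\}$, and compactness of the unit sphere then gives $c_k$. You also prove the orthogonal direct-sum decomposition $\mathbb R^N=\bigoplus_{k=0}^K\mathcal V_k$, which the paper's proof leaves implicit, and the nontriviality of each layer via stabilization of the filtration; both arguments are correct.
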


\begin{proof}
Fix $k\ge 0$ and let $v\in \mathcal V_k$ satisfy
$B^\top (A^\top)^k v=0.$
We first recall that for every $\ell\ge 0$,
\[
\bigl(\operatorname{Ran}(A^\ell B)\bigr)^\perp
=
\ker\bigl((A^\ell B)^\top\bigr)
=
\ker\bigl(B^\top (A^\top)^\ell\bigr).
\]
Hence
\begin{equation}\label{eq:Sk-orth-single}
\mathcal K_k^\perp
=
\Big(\sum_{\ell=0}^k \operatorname{Ran}(A^\ell B)\Big)^\perp
=
\bigcap_{\ell=0}^k \ker\bigl(B^\top (A^\top)^\ell\bigr).
\end{equation}

Now, since $v\in \mathcal V_k=\mathcal K_k\cap \mathcal K_{k-1}^\perp$, we have
$v\in \mathcal K_k
\quad\text{and}\quad
v\in \mathcal K_{k-1}^\perp.$
Because
\[
\mathcal K_{k-1}=\sum_{\ell=0}^{k-1}\operatorname{Ran}(A^\ell B),
\quad 
\text{the relation} \quad v\in \mathcal K_{k-1}^\perp\quad  \text{implies}
\quad 
B^\top (A^\top)^\ell v=0,
\qquad \ell=0,\dots,k-1.
\]
Together with the assumption $B^\top (A^\top)^k v=0$, this yields
\[
v\in \bigcap_{\ell=0}^k \ker\bigl(B^\top (A^\top)^\ell\bigr)=\mathcal K_k^\perp
\]
by \eqref{eq:Sk-orth-single}. Since already $v\in \mathcal K_k$, we conclude that
$v\in \mathcal K_k\cap \mathcal K_k^\perp=\{0\},$
hence $v=0$. This proves that $B^\top (A^\top)^k:\mathcal V_k\to\R^m$ is injective.
Since $\mathcal V_k$ is finite-dimensional, injectivity implies the existence of
$c_k>0$ such that
\[
\|B^\top (A^\top)^k v\|_{\R^m}\ge c_k\|v\|_{\R^N},
\qquad \forall\,v\in\mathcal V_k.
\]
\end{proof}
\medskip
\paragraph{\textbf{Step 4: Low-frequency expansion and coercivity.}}
We use the complexified Kalman layers
\[
\mathcal V_k^{\mathbb C}
:=\mathcal V_k\otimes_{\mathbb R}\mathbb C
=\{v+iw:\ v,w\in\mathcal V_k\}\subset\mathbb C^N.
\]
\begin{lemma}\label{lem:lowfreq}
Assume that
\[
\mathbb R^N=\bigoplus_{k=0}^K\mathcal V_k
\]
is the orthogonal Kalman decomposition of Lemma~\ref{lem:kalman}. Define
\[
q^*\mathcal G_T(\xi)q
:=
\int_0^T
\|B^\top e^{is\xi A^\top}q\|_{\mathbb C^m}^2\,ds .
\]
Then there exist constants \(\eta_0>0\) and \(c>0\), depending only on
\((A,B,K)\), such that, for all \(T>0\), all \(T|\xi|\le \eta_0\), and all
\(q=\sum_{k=0}^K q_k\) with \(q_k\in\mathcal V_k^{\mathbb C}\),
\begin{equation}\label{eq:lowfreq-global}
q^*\mathcal G_T(\xi)q
\ge
c\sum_{k=0}^K
T^{2k+1}|\xi|^{2k}\|q_k\|_{\mathbb C^N}^2 .
\end{equation}
In particular,
\begin{equation}\label{eq:lowfreq-global-min}
q^*\mathcal G_T(\xi)q
\ge
c\min\{T,T^{2K+1}\}
\sum_{k=0}^K
|\xi|^{2k}\|q_k\|_{\mathbb C^N}^2 .
\end{equation}
\end{lemma}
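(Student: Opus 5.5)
Rescaling time reduces the lemma to a fixed interval. Set $s=T\tau$ with $\tau\in(0,1)$ and $\eta:=T\xi$. Then
\[
q^*\mathcal G_T(\xi)q=T\int_0^1\bigl\|B^\top e^{i\tau\eta A^\top}q\bigr\|_{\mathbb C^m}^2\,d\tau ,
\]
and \eqref{eq:lowfreq-global} becomes the $T$-free statement
\[
\int_0^1\|B^\top e^{i\tau\eta A^\top}q\|^2\,d\tau\ge c\sum_k|\eta|^{2k}\|q_k\|^2,\qquad |\eta|\le\eta_0 .
\]
Every constant will therefore depend only on $(A,B,K)$. The inequality \eqref{eq:lowfreq-global-min} then follows at once, because $T^{2k+1}\ge\min\{T,T^{2K+1}\}$ for $0\le k\le K$.

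Next, expand the exponential. We have $B^\top e^{i\tau\eta A^\top}q_k=\sum_{j\ge0}\frac{(i\tau\eta)^j}{j!}B^\top(A^\top)^jq_k$. By Lemma~\ref{lem:kalman} and the relation $B^\top(A^\top)^\ell\Pi_k=0$ for $\ell<k$, the terms with $j<k$ vanish, which gives
\[
B^\top e^{i\tau\eta A^\top}q=\sum_{k=0}^K\frac{(i\tau\eta)^k}{k!}\,\tilde u_k+R(\tau,\eta),\qquad \tilde u_k:=B^\top(A^\top)^kq_k .
\]
The remainder collects the terms $j\ge k+1$ from each layer. Hence
\[
\|R\|\le C\sum_k|\eta|^{k+1}\|q_k\|
\]
for $|\eta|\le1$, with $C$ depending on $\|A\|,\|B\|,K$ (the tail of the exponential series is bounded by $|\eta|^{k+1}e^{\|A\|}\|B\|$).

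To bound the main term from below, apply \eqref{gramm} with $p_k(\tau)=\tau^k/k!$, which are linearly independent monomials in $L^2(0,1)$, and with vectors $(i\eta)^k\tilde u_k$. Combined with \eqref{eq:ck-single} this yields
\[
\int_0^1\Bigl\|\sum_k\frac{(i\tau\eta)^k}{k!}\tilde u_k\Bigr\|^2d\tau\ \ge\ \lambda_{\min}(M)\sum_k|\eta|^{2k}\|\tilde u_k\|^2\ \ge\ \lambda_{\min}(M)\min_kc_k^2\sum_k|\eta|^{2k}\|q_k\|^2 .
\]
Here $c_k$ is extended to complex vectors $v+iw$ by splitting into real and imaginary parts, which are orthogonal in $\mathbb C^m$ because $B^\top(A^\top)^k$ is real.

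Finally, absorb the remainder. The inequality $\|a+b\|^2\ge\frac12\|a\|^2-\|b\|^2$ gives
\[
\int_0^1\|B^\top e^{i\tau\eta A^\top}q\|^2\ge \tfrac12 c_0'\sum_k|\eta|^{2k}\|q_k\|^2-C'\sum_k|\eta|^{2k+2}\|q_k\|^2 .
\]
This is the step I expect to be the main obstacle, and the layered structure resolves it: the error on layer $k$ has order $|\eta|^{2k+2}$, strictly higher than that layer's own leading weight $|\eta|^{2k}$. There is no cross-layer comparison to worry about, since after Cauchy–Schwarz the remainder is controlled layer by layer with $K+1$ as the only constant. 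Choosing $\eta_0\le1$ with $C'\eta_0^2\le\frac14c_0'$ gives the claim with $c=\frac14c_0'$, and undoing the rescaling yields \eqref{eq:lowfreq-global}.
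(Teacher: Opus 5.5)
Your proof is correct and follows essentially the same route as the paper. You expand layer by layer starting at order $k$ (using $B^\top(A^\top)^\ell\Pi_k=0$ for $\ell<k$), bound the leading jets from below with the monomial Gram estimate \eqref{gramm} and \eqref{eq:ck-single}, and absorb a remainder that is smaller by a factor $(T|\xi|)^2$ via $\|a+b\|^2\ge\frac12\|a\|^2-\|b\|^2$. The only difference is cosmetic: you rescale to $\tau\in(0,1)$, $\eta=T\xi$ at the outset, i.e.\ you use $\mathcal G_T(\xi)=T\mathcal G_1(T\xi)$. This makes the $T$-independence of $c$ and $\eta_0$ transparent, whereas the paper performs the substitution $s=T\tau$ only inside the Gram step.
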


\begin{proof}
Write \(q=\sum_{k=0}^K q_k\), with \(q_k\in\mathcal V_k^{\mathbb C}\). By the
triangular Kalman relations,
\[
B^\top(A^\top)^\ell q_k=0,\qquad \ell<k.
\]
Hence Taylor's formula gives, uniformly for \(0\le s\le T\) and
\(T|\xi|\le\eta_*\),
\[
B^\top e^{is\xi A^\top}q_k
=
(i\xi)^k\frac{s^k}{k!}B^\top(A^\top)^kq_k
+
r_{k,\xi}(s),
\qquad
\|r_{k,\xi}(s)\|
\le
C|\xi|^{k+1}s^{k+1}\|q_k\|.
\]
Set
\[
u_k:=B^\top(A^\top)^kq_k,\qquad
S_\xi(s):=\sum_{k=0}^K(i\xi)^k\frac{s^k}{k!}u_k,
\qquad
R_\xi(s):=\sum_{k=0}^K r_{k,\xi}(s).
\]
Then
\[
B^\top e^{is\xi A^\top}q=S_\xi(s)+R_\xi(s).
\]

Let \(s=T\tau\). Since \(1,\tau,\dots,\tau^K\) are linearly independent in
\(L^2(0,1)\), the finite-dimensional Gram estimate and Lemma~\ref{lem:kalman}
give
\[
\int_0^T\|S_\xi(s)\|^2\,ds
\ge
c_0\sum_{k=0}^K
T^{2k+1}|\xi|^{2k}\|q_k\|^2 ,
\]
where \(c_0>0\) depends only on \((A,B,K)\).

On the other hand, using the remainder estimate and the finiteness of the sum over
\(k\),
\[
\int_0^T\|R_\xi(s)\|^2\,ds
\le
C(T|\xi|)^2
\sum_{k=0}^K
T^{2k+1}|\xi|^{2k}\|q_k\|^2 .
\]
Therefore, from
\[
\|a+b\|^2\ge \frac12\|a\|^2-\|b\|^2,
\]
we obtain
\[
q^*\mathcal G_T(\xi)q
\ge
\left(\frac{c_0}{2}-C(T|\xi|)^2\right)
\sum_{k=0}^K
T^{2k+1}|\xi|^{2k}\|q_k\|^2 .
\]
Choosing \(\eta_0>0\) so small that
$C\eta_0^2\le \frac{c_0}{4},$
we get, for \(T|\xi|\le\eta_0\),
\[
q^*\mathcal G_T(\xi)q
\ge
\frac{c_0}{4}
\sum_{k=0}^K
T^{2k+1}|\xi|^{2k}\|q_k\|^2 .
\]
This proves \eqref{eq:lowfreq-global}. Finally, since
\[
T^{2k+1}\ge \min\{T,T^{2K+1}\},
\qquad 0\le k\le K,
\]
we obtain \eqref{eq:lowfreq-global-min}.
\end{proof}
\begin{corollary}
\label{cor:layerwise-asymptotics}
Under the assumptions of Lemma~\ref{lem:lowfreq}, fix
\(k\in\{0,\dots,K\}\). Then there exist constants
\(c_k>0\), \(C_k>0\), and \(\eta_k>0\), depending only on
\((A,B,K)\), such that, for every \(T>0\), every
\(q\in\mathcal V_k^{\mathbb C}\), and every \(T|\xi|\le \eta_k\),
\begin{equation}\label{eq:layerwise-asymptotics}
c_k\,T^{2k+1}|\xi|^{2k}\|q\|_{\mathbb C^N}^2
\le
q^*\mathcal G_T(\xi)q
\le
C_k\,T^{2k+1}|\xi|^{2k}\|q\|_{\mathbb C^N}^2 .
\end{equation}
Equivalently, for each fixed \(T>0\),
\[
q^*\mathcal G_T(\xi)q
\asymp
T^{2k+1}|\xi|^{2k}\|q\|_{\mathbb C^N}^2,
\qquad
q\in\mathcal V_k^{\mathbb C},\quad |\xi|\to0 .
\]
The constants in this equivalence are uniform in the low-frequency regime
\(T|\xi|\le \eta_k\).

In operator form, if \(\Pi_k\) denotes the orthogonal projector onto
\(\mathcal V_k\), then, for each fixed \(T>0\),
\[
\lambda_{\min}\!\left(
\Pi_k \mathcal G_T(\xi)\Pi_k\big|_{\mathcal V_k^{\mathbb C}}
\right)
\asymp
T^{2k+1}|\xi|^{2k},
\qquad |\xi|\to0 .
\]
Equivalently, the same estimate holds uniformly whenever
\(T|\xi|\le \eta_k\).
\end{corollary}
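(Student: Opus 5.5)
The plan is to get the two inequalities in \eqref{eq:layerwise-asymptotics} separately. The lower bound comes directly from Lemma~\ref{lem:lowfreq}. The upper bound uses the same Taylor expansion of \(B^\top e^{is\xi A^\top}q\) that appeared in that proof. The operator statement then follows by taking Rayleigh quotients.

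\emph{Lower bound.} Take \(q\in\mathcal V_k^{\mathbb C}\). Its decomposition has only one nonzero component, \(q_k=q\). Apply \eqref{eq:lowfreq-global} with \(\eta_k:=\eta_0\) and \(c_k:=c\). This gives \(q^*\mathcal G_T(\xi)q\ge c\,T^{2k+1}|\xi|^{2k}\|q\|^2\) whenever \(T|\xi|\le\eta_0\).

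\emph{Upper bound.} For \(q\in\mathcal V_k^{\mathbb C}\) we have the triangular relations \(B^\top(A^\top)^\ell q=0\) for \(\ell<k\). These come from \(q\in\mathcal K_{k-1}^\perp\otimes\mathbb C\), exactly as in the proof of Lemma~\ref{lem:kalman}. The exponential series therefore starts at order \(k\):
\[
B^\top e^{is\xi A^\top}q=\sum_{\ell\ge k}\frac{(is\xi)^\ell}{\ell!}B^\top(A^\top)^\ell q .
\]
This yields \(\|B^\top e^{is\xi A^\top}q\|\le \|B\|\,\|A\|^k |s\xi|^k e^{|s\xi|\|A\|}\|q\|/k!\). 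For \(0\le s\le T\) and \(T|\xi|\le\eta_k\), the exponential factor is at most \(e^{\eta_k\|A\|}\). Squaring and integrating over \(s\in(0,T)\) gives \(\int_0^T s^{2k}\,ds=T^{2k+1}/(2k+1)\), so
\[
q^*\mathcal G_T(\xi)q\le C_k\,T^{2k+1}|\xi|^{2k}\|q\|^2,
\]
with \(C_k\) depending only on \(A\), \(B\), \(k\) and \(\eta_k\). Hence \(C_k\) depends only on \((A,B,K)\).

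\emph{Operator form.} The operator \(\Pi_k\mathcal G_T(\xi)\Pi_k|_{\mathcal V_k^{\mathbb C}}\) is Hermitian on \(\mathcal V_k^{\mathbb C}\). For \(q\in\mathcal V_k^{\mathbb C}\) its quadratic form is \(q^*\Pi_k\mathcal G_T\Pi_k q=q^*\mathcal G_T q\), since \(\Pi_k q=q\) and \(\Pi_k\) is an orthogonal projector. By the Rayleigh characterization, \(\lambda_{\min}\) is the infimum of \(q^*\mathcal G_Tq/\|q\|^2\) over \(q\in\mathcal V_k^{\mathbb C}\setminus\{0\}\). The two-sided bound above therefore gives
\[
c_kT^{2k+1}|\xi|^{2k}\le\lambda_{\min}\le C_kT^{2k+1}|\xi|^{2k}.
\]
For the upper inequality it suffices to evaluate at any fixed nonzero \(q\), or equivalently to note that \(\lambda_{\min}\le\lambda_{\max}\).

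There is no real obstacle in this argument. The only point needing care is that the constants are uniform in \(T\). This holds because every bound depends on \(s\) and \(\xi\) only through \(s|\xi|\le T|\xi|\le\eta_k\), and on \(T\) only through the explicit factor \(T^{2k+1}\) produced by integrating in \(s\).
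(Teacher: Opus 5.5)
Your proof is correct and takes essentially the same approach as the paper. The triangular Kalman relations make the expansion of \(B^\top e^{is\xi A^\top}q\) start at order \(k\), injectivity of \(B^\top(A^\top)^k\) on \(\mathcal V_k\) gives the lower bound (you get it by specializing Lemma~\ref{lem:lowfreq} to a single layer, where that argument is done in full), and integrating the order-\(k\) bound in \(s\) gives the upper bound; your explicit tail estimate for the exponential series and the Rayleigh-quotient step for the operator form fill in details the paper's short proof leaves implicit.
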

\begin{proof}
The triangular Kalman relations imply that the first nonzero term in the
expansion of \(B^\top e^{is\xi A^\top}q\) occurs at order \(k\). Hence
\[
B^\top e^{is\xi A^\top}q
=
(i\xi)^k\frac{s^k}{k!}B^\top(A^\top)^kq
+
O(|\xi|^{k+1}s^{k+1}\|q\|),
\]
uniformly for \(s\in[0,T]\). Since \(B^\top(A^\top)^k\) is injective on
\(\mathcal V_k\), integration in time gives the claimed two-sided estimate for
\(|\xi|\) small.
\end{proof}
\begin{remark}
The factor \(T^{2k+1}\) in Corollary~\ref{cor:layerwise-asymptotics}
is the natural small-time scaling of a Kalman chain of depth \(k\). Equivalently,
at fixed low frequency the inverse Gramian on \(\mathcal V_k\) has size
\(T^{-(2k+1)}\), so the corresponding control norm scales like
\(T^{-(k+1/2)}\). For the deepest layer \(k=K\), this agrees with the
classical finite-dimensional small-time blow-up rate for controllable linear
systems. The constants obtained in Lemma~\ref{lem:lowfreq} come from a
finite-dimensional moment estimate for the monomials
\(1,\tau,\dots,\tau^K\); they are positive for each fixed \(K\), but are not
intended to be quantitatively optimal.
\end{remark}
\subsection{Medium and high frequencies}
\label{subsec:medium-high}

It remains to treat \(|\xi|\geq\xi_0\), where \(\xi_0\in(0,1]\) is given by
Lemma~\ref{lem:lowfreq}.

\paragraph{\textbf{Step A: Uniform coercivity on bounded nonzero frequency bands.}}

For each fixed $\xi\neq0$, the Fourier modes satisfy
\[
\partial_t \widehat q+i\xi A^\top\widehat q=0,
\qquad
B^\top\widehat q \ \text{observed}.
\]
We first show that the Gramian $\mathcal G_T(\xi)$ is positive definite for every
$\xi\neq0$. Indeed, if
$z^*\mathcal G_T(\xi)z=0,$
then
\[
B^\top e^{is\xi A^\top}z=0,
\qquad 0\leq s\leq T.
\]
Differentiating at $s=0$ gives
\[
B^\top(A^\top)^\ell z=0,
\qquad \ell\geq0.
\]
By the Kalman rank condition, this implies $z=0$. Hence $\mathcal G_T(\xi)$ is positive
definite for every $\xi\neq0$.

Now fix $R>\xi_0$. Since $\xi\mapsto \mathcal G_T(\xi)$ is continuous as a Hermitian
matrix-valued function, the map $\xi\mapsto \lambda_{\min}(\mathcal G_T(\xi))$
is continuous on the compact set
$\{\xi\in\mathbb R:\xi_0\leq |\xi|\leq R\}.$
It is strictly positive there. Therefore, there exists
$c_{\mathrm{med}}>0$ such that
\begin{equation}\label{eq:medium-coercivity}
z^*\mathcal G_T(\xi)z
\geq
c_{\mathrm{med}}\|z\|_{\C^N}^2,
\qquad
\xi_0\leq |\xi|\leq R.
\end{equation}

\paragraph{\textbf{Step B: High-frequency coercivity by spectral separation.}}

We consider \( |\xi|\to\infty \). Since \(A^\top\) is diagonalizable over
\(\mathbb R\), one has
\[
\mathbb R^N=\bigoplus_{\lambda\in\sigma(A)}E_\lambda,
\qquad
E_\lambda:=\ker(A^\top-\lambda I).
\]
Let \(\mathcal P_\lambda\) denote the spectral projector onto \(E_\lambda\) associated
with this direct-sum decomposition. Thus
\[
\sum_{\lambda\in\sigma(A)}\mathcal P_\lambda=I,
\qquad
A^\top \mathcal P_\lambda=\lambda \mathcal P_\lambda .
\]
Equivalently,
\[
\mathcal P_\lambda
=
\prod_{\substack{\mu\in\sigma(A)\\ \mu\neq\lambda}}
\frac{A^\top-\mu I}{\lambda-\mu}.
\]
Thus, for $z=\sum_\lambda \mathcal P_\lambda z$,
Writing $a_\lambda:=B^\top \mathcal P_\lambda z$, the Gramian gives
\[
\frac1T z^*\mathcal G_T(\xi)z
=
\sum_{\lambda}\|a_\lambda\|_{\C^m}^2
+
\sum_{\lambda\neq\nu}
\langle a_\lambda,a_\nu\rangle
\frac1T\int_0^T e^{is\xi(\lambda-\nu)}\,ds .
\]
For $\lambda\neq\nu$,
\[
\left|
\frac1T\int_0^T e^{is\xi(\lambda-\nu)}\,ds
\right|
\leq
\frac{2}{T|\xi|\,|\lambda-\nu|}.
\]
Since $\sigma(A)$ is finite, the spectral gaps are bounded from below; hence
there exists $C_A>0$ such that
\begin{equation}\label{eq:cross-high}
\left|
\sum_{\lambda\neq\nu}
\langle a_\lambda,a_\nu\rangle
\frac1T\int_0^T e^{is\xi(\lambda-\nu)}\,ds
\right|
\leq
\frac{C_A}{T|\xi|}
\sum_{\lambda}\|a_\lambda\|_{\C^m}^2 .
\end{equation}
It remains to control $\|z\|_{\C^N}^2$ by the diagonal part. The Kalman rank
condition implies that $B^\top$ is injective on each $E_\lambda$:
indeed, if $w\in E_\lambda$ and $B^\top w=0$, then
\[
B^\top(A^\top)^\ell w=\lambda^\ell B^\top w=0,
\qquad \ell\geq0,
\]
hence $w=0$. Therefore
\[
f(z):=\sum_{\lambda\in\sigma(A)}
\|B^\top \mathcal P_\lambda z\|_{\C^m}^2
\]
is positive definite on $\mathbb C^N$. By compactness of the unit sphere,
there exists $\alpha>0$ such that
\begin{equation}\label{eq:spectral-coercivity}
\sum_{\lambda\in\sigma(A)}
\|B^\top \mathcal P_\lambda z\|_{\C^m}^2
\geq
\alpha\|z\|_{\C^N}^2,
\qquad z\in\mathbb C^N .
\end{equation}
No orthogonality of the spectral projectors $P_\lambda$ is required.
Combining \eqref{eq:cross-high} and \eqref{eq:spectral-coercivity},
\[
\frac1T z^*\mathcal G_T(\xi)z
\geq
\left(1-\frac{C_A}{T|\xi|}\right)
\sum_{\lambda}
\|B^\top \mathcal P_\lambda z\|_{\C^m}^2 .
\]
Choosing $R_\infty>0$ so that $C_A/(T|\xi|)\leq1/2$ for
$|\xi|\geq R_\infty$, we obtain
\begin{equation}\label{eq:high-coercivity}
z^*\mathcal G_T(\xi)z
\geq
\frac{\alpha T}{2}\|z\|_{\C^N}^2,
\qquad
|\xi|\geq R_\infty .
\end{equation}

\paragraph{\textbf{Conclusion and sharp time dependence in the whole-line estimate.}}
Set $\xi_T:=\frac{\eta_0}{T}.$
For \(|\xi|\leq\xi_T\), Lemma~\ref{lem:lowfreq} gives
\[
\widehat q_T(\xi)^*\mathcal G_T(\xi)\widehat q_T(\xi)
\geq
c_{\rm low}\min\{T,T^{2K+1}\}
\sum_{k=0}^K
\rho_k(\xi)\|\Pi_k\widehat q_T(\xi)\|_{\C^N}^2 .
\]
For \(|\xi|\geq\xi_T\), using the scaling identity
\[
\mathcal G_T(\xi)=T\mathcal G_1(T\xi),
\]
together with the medium- and high-frequency coercivity at time \(1\), there exists
\(c_{\rm mh}>0\), depending only on \((A,B,K)\), such that
\[
z^*\mathcal G_T(\xi)z
\geq
c_{\rm mh}T\|z\|_{\C^N}^2,
\qquad |\xi|\geq\xi_T .
\]
Since \(0\leq\rho_k(\xi)\leq1\) and the projectors \(\Pi_k\) are orthogonal, this yields
\[
\widehat q_T(\xi)^*\mathcal G_T(\xi)\widehat q_T(\xi)
\geq
c_{\rm mh}T
\sum_{k=0}^K
\rho_k(\xi)\|\Pi_k\widehat q_T(\xi)\|_{\C^N}^2 .
\]
Since \(T\geq \min\{T,T^{2K+1}\}\), the two estimates imply
\begin{equation}\label{eq:global-frequency-bound-final}
\widehat q_T(\xi)^*\mathcal G_T(\xi)\widehat q_T(\xi)
\geq
c_*
\min\{T,T^{2K+1}\}
\sum_{k=0}^K
\rho_k(\xi)\|\Pi_k\widehat q_T(\xi)\|_{\C^N}^2,
\qquad \xi\in\mathbb R ,
\end{equation}
where
$c_*:=\min\{c_{\rm low},c_{\rm mh}\}>0 .$
Integrating \eqref{eq:global-frequency-bound-final} in \(\xi\) and using
Plancherel's theorem, with the Plancherel constant absorbed into \(c_*\), gives
\[
\int_0^T\int_{\mathbb R}|B^\top q(t,x)|^2\,dx\,dt
\geq
c_*\min\{T,T^{2K+1}\}
\|q_T\|_{\mathcal Y_K^-}^2 .
\]
Equivalently,
\[
\|q_T\|_{\mathcal Y_K^-}^2
\leq
C_*
\max\left\{\frac1T,\frac1{T^{2K+1}}\right\}
\int_0^T\int_{\mathbb R}
|B^\top q(t,x)|^2\,dx\,dt .
\]
This proves the upper bound in the whole-line observability estimate
\eqref{eq:obs-global}.

We now show that this time dependence is sharp. Let \(C_{\rm obs}(T)\) denote
the best constant in
\[
\|q_T\|_{\mathcal Y_K^-}^2
\leq
C_{\rm obs}(T)
\int_0^T\int_{\mathbb R}|B^\top q(t,x)|^2\,dx\,dt .
\]
The preceding estimate gives
\[
C_{\rm obs}(T)
\lesssim
\max\left\{\frac1T,\frac1{T^{2K+1}}\right\}.
\]
We prove the reverse bound.
First, choose \(0\neq v_0\in\mathcal V_0^{\mathbb C}\) and terminal data
\[
\widehat q_T(\xi)=\varphi(\xi)v_0,
\qquad
0\neq\varphi\in C_c^\infty(\mathbb R).
\]
Since the propagator \(e^{is\xi A^\top}\) is uniformly bounded for
\(\xi\in\operatorname{supp}\varphi\) and \(s\in[0,T]\), and since
\(\rho_0\equiv1\), we have
\[
\int_0^T\int_{\mathbb R}|B^\top q(t,x)|^2\,dx\,dt
\leq
CT\|q_T\|_{\mathcal Y_K^-}^2 .
\]
Hence
\[
C_{\rm obs}(T)\gtrsim T^{-1}.
\]

Next, choose \(0\neq v_K\in\mathcal V_K^{\mathbb C}\). By the triangular
Kalman relations,
\[
B^\top(A^\top)^\ell v_K=0,\qquad \ell<K.
\]
Thus, for \(T|\xi|\) small,
\[
B^\top e^{is\xi A^\top}v_K
=
(i\xi)^K\frac{s^K}{K!}B^\top(A^\top)^Kv_K
+
O(|\xi|^{K+1}s^{K+1}).
\]
Consequently,
\[
\int_0^T
\|B^\top e^{is\xi A^\top}v_K\|^2\,ds
\leq
C T^{2K+1}|\xi|^{2K}\|v_K\|^2 .
\]
Take terminal data
\[
\widehat q_T(\xi)=\varphi(\xi)v_K,
\qquad
0\neq\varphi\in C_c^\infty(-\delta,\delta),
\]
with \(\delta>0\) sufficiently small. For \(0<T\leq T_0\), with \(T_0\delta\)
small enough, the preceding estimate gives
\[
\int_0^T\int_{\mathbb R}|B^\top q(t,x)|^2\,dx\,dt
\leq
C T^{2K+1}
\int_{\mathbb R}|\xi|^{2K}|\varphi(\xi)|^2\,d\xi .
\]
On the other hand, after choosing \(\delta\leq1\),
\[
\|q_T\|_{\mathcal Y_K^-}^2
=
\int_{\mathbb R}\rho_K(\xi)|\varphi(\xi)|^2\|v_K\|^2\,d\xi
\asymp
\int_{\mathbb R}|\xi|^{2K}|\varphi(\xi)|^2\,d\xi .
\]
Therefore,
\[
C_{\rm obs}(T)\gtrsim T^{-(2K+1)},
\qquad 0<T\leq T_0 .
\]
Together with the previous lower bound \(C_{\rm obs}(T)\gtrsim T^{-1}\),
this yields
\[
C_{\rm obs}(T)
\gtrsim
\max\left\{\frac1T,\frac1{T^{2K+1}}\right\}.
\]
Combining this reverse bound with the upper bound proves the sharp time
dependence of the whole-line observability constant.

\section{Localized observability and frequency decomposition}
\label{sec:localized-observability}
We now pass from whole-line observation to observation on a set
\(\mathcal O\) satisfying \eqref{eq:gcc}. The low-frequency algebra has already
been identified in Section~3: the finite-time Gramian selects the
Kalman-adapted norm \(\mathcal Y_K^{-}\). The new difficulty is geometric.

The proof separates two regimes. Away from the origin, the Kalman weights are
uniformly equivalent to the standard \(L^2\) weights. In this regime, the
characteristic families can be separated by finite-order differential operators,
and each eigenspace component is recovered from \(B^\top q\) by the Hautus
injectivity condition, provided \(T>T_{\mathrm{geom}}\). Near the origin,
the geometry of the thick set must instead be combined with the whole-line
Kalman-weighted Gramian estimate and a Logvinenko--Sereda inequality. The final
localized estimate is obtained by recombining the two regimes, using the
saturation of the Kalman weights
$\rho_k(\xi)=\min\{|\xi|^{2k},1\}.$

Throughout this section we use the notation of \eqref{eq:gcc}. We also use the
Hautus form of the Kalman condition,
\[
\ker B^\top\cap\ker(A^\top-\lambda I)=\{0\},
\quad \lambda\in\sigma(A),
\]
or equivalently, \(B^\top\) is injective on each eigenspace of \(A^\top\).

\subsection{Geometry of thick observation sets}
\label{thick-sets}
We first record the geometric consequence of the thickness-and-gap condition needed in the characteristic recovery argument. This will be used in the propagation estimate below.
\begin{lemma}
\label{lem:density-bands}
Assume \eqref{eq:gcc}. Then, for every bounded interval
$I\subset\mathbb R,$
\[
|I\cap\mathcal O|
\ge
\frac{\ell_{\mathcal O}}{\ell_{\mathcal O}+G_{\mathcal O}}\bigl(|I|-G_{\mathcal O}\bigr)_+,\quad \text{where}
\quad (\cdot)_+ \quad \text{denotes the positive part.}
\]
\end{lemma}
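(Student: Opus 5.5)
The plan is to reduce the claim to an integer-part count of how many full periods of length \(L_{\mathcal O}=\ell_{\mathcal O}+G_{\mathcal O}\) fit inside \(I\), using the thickness property recorded in the introduction. Every interval of length \(L_{\mathcal O}\) meets \(\mathcal O\) in measure at least \(\ell_{\mathcal O}\). To see this, let \(J=(x,x+L_{\mathcal O})\). If \(J\) contains no full component \((a_n,b_n)\), then \(J\) must straddle a single gap together with parts of its two neighbouring components. The total length of those two parts is at least \(L_{\mathcal O}-G_{\mathcal O}=\ell_{\mathcal O}\). Otherwise \(J\) contains a whole component, which has length \(\ge\ell_{\mathcal O}\).

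Write \(I=(\alpha,\alpha+|I|)\), and let \(n=\lfloor |I|/L_{\mathcal O}\rfloor\). Split \(I\) into \(n\) disjoint consecutive subintervals of length \(L_{\mathcal O}\), plus a remainder of length \(r=|I|-nL_{\mathcal O}\in[0,L_{\mathcal O})\). The full subintervals contribute at least \(n\ell_{\mathcal O}\).

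For the remainder, I apply the same straddling argument to an interval \(J'\) of length \(r\): it meets \(\mathcal O\) in measure at least \((r-G_{\mathcal O})_+\). Indeed, \(J'\) can contain at most one gap entirely, because two gaps would force it to contain a full component in between, and then \(r\ge 2G\)-type lengths plus \(\ell_{\mathcal O}\) would exceed \(L_{\mathcal O}\). More carefully, if \(J'\) meets two gaps then it contains a whole component, so the bound \(\ge\ell_{\mathcal O}\ge (r-G_{\mathcal O})_+\) holds anyway, since \(r<L_{\mathcal O}\). Otherwise \(J'\setminus\mathcal O\) lies in one gap and has measure \(\le G_{\mathcal O}\).

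Combining the two contributions gives
\[
|I\cap\mathcal O|\ \ge\ n\ell_{\mathcal O}+(r-G_{\mathcal O})_+ .
\]
It then remains to check the elementary inequality
\[
n\ell_{\mathcal O}+(r-G_{\mathcal O})_+\ \ge\ \gamma_{\mathcal O}\bigl(nL_{\mathcal O}+r-G_{\mathcal O}\bigr)_+,
\qquad \gamma_{\mathcal O}=\ell_{\mathcal O}/L_{\mathcal O}.
\]
Since \(\gamma_{\mathcal O}nL_{\mathcal O}=n\ell_{\mathcal O}\), this reduces to showing \((r-G_{\mathcal O})_+\ge\gamma_{\mathcal O}(r-G_{\mathcal O})\) when the right-hand side is positive. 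That holds because \(\gamma_{\mathcal O}\le1\).

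If \(nL_{\mathcal O}+r-G_{\mathcal O}\le0\), the right-hand side of the claim is zero and there is nothing to prove. When \(n\ge1\) and \(r<G_{\mathcal O}\), the right side equals \(n\ell_{\mathcal O}+\gamma_{\mathcal O}(r-G_{\mathcal O})\le n\ell_{\mathcal O}\), so the claim holds as well.

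The only delicate point is the remainder estimate, namely the case analysis showing that a short interval loses at most one gap's worth of measure. I expect it to be routine once the structure \(b_n<a_{n+1}\) of the ordered components is used. Gaps could in principle accumulate if the components were not ordered, but the indexing by \(\mathbb Z\) together with \(\ell_{\mathcal O}>0\) rules this out, so each bounded interval meets only finitely many components.
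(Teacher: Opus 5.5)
Your argument is correct, but it is organized differently from the paper's. The paper does not tile $I$. It lets $n$ be the number of gaps meeting $I$ and bounds $|I\setminus\mathcal O|\le nG_{\mathcal O}$. Since a full component lies between any two consecutive gaps met by $I$, it also gets $|I\cap\mathcal O|\ge(n-1)\ell_{\mathcal O}$. Eliminating $n$ from these two inequalities, with $|I|=|I\cap\mathcal O|+|I\setminus\mathcal O|$, gives the linear bound directly, with no remainder analysis and no need to establish thickness first.

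You instead first prove the per-interval estimate $|J\cap\mathcal O|\ge\min\{\ell_{\mathcal O},(|J|-G_{\mathcal O})_+\}$: an interval either contains a whole component or misses $\mathcal O$ only inside one gap. You apply this to $\lfloor |I|/L_{\mathcal O}\rfloor$ tiles of length $L_{\mathcal O}$ and to the remainder, and then convert. This costs a case distinction. In return you get the sharper staircase bound $n\ell_{\mathcal O}+(r-G_{\mathcal O})_+$, which is attained for the periodic set by an interval starting at a gap. It also makes the $(\gamma_{\mathcal O},L_{\mathcal O})$-thickness an explicit intermediate step.

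One slip in the remainder step. Your first justification, that the remainder ``can contain at most one gap entirely,'' is false when some gaps are much shorter than $G_{\mathcal O}$: an interval of length $\ell_{\mathcal O}+2\varepsilon<L_{\mathcal O}$ can contain two gaps of length $\varepsilon$. Your ``more carefully'' version uses only that meeting two gaps forces a full component, and it is the right argument. Likewise, in the full-tile case, ``straddles a single gap together with parts of its two neighbouring components'' should read ``misses $\mathcal O$ only inside a single gap.'' That wording also covers tiles that start inside a gap or lie in one component.
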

\begin{proof}
Set $F:=\mathbb R\setminus\mathcal O.$ Up to sets of measure zero, $F$ is the
union of the gaps $(b_j,a_{j+1}),\quad j\in\mathbb Z,$
each of length at most $G_{\mathcal O}.$
Let $I\subset\mathbb R$ be a bounded interval. Let $n$ be the number of gaps
which intersect $I.$ If $n=0,$ then $|I\cap F|=0,$ and the estimate is
immediate.
Assume now that $n\ge1.$ Set
$e:=|I\cap\mathcal O|,
\quad
f:=|I\cap F|.$
Since each gap has length at most $G_{\mathcal O},$ we have
$f\le nG_{\mathcal O}.$
Moreover, between any two consecutive gaps met by $I,$ the interval $I$
contains at least one full connected component of $\mathcal O,$ whose length is at
least $\ell_{\mathcal O}.$ Therefore
$e\ge (n-1)\ell_{\mathcal O}.$
Hence
$n\le \frac{e}{\ell_{\mathcal O}}+1.$
Combining this with \(f\le nG_{\mathcal O}\), we obtain
\[
f\le G_{\mathcal O}\left(\frac{e}{\ell_{\mathcal O}}+1\right).
\]
Since $|I|=e+f,$ it follows that
\[
|I|
\le
e+G_{\mathcal O}\left(\frac{e}{\ell_{\mathcal O}}+1\right)
=
\frac{\ell_{\mathcal O}+G_{\mathcal O}}{\ell_{\mathcal O}}e+G_{\mathcal O}.
\quad 
\text{Thus}
\quad 
e\ge
\frac{\ell_{\mathcal O}}{\ell_{\mathcal O}+G_{\mathcal O}}\bigl(|I|-G_{\mathcal O}\bigr).
\]
If $|I|\le G_{\mathcal O},$ the right-hand side is nonpositive, and the estimate follows
after taking the positive part. Therefore
\[
|I\cap\mathcal O|
\ge
\frac{\ell_{\mathcal O}}{\ell_{\mathcal O}+G_{\mathcal O}}\bigl(|I|-G_{\mathcal O}\bigr)_+.
\]
\end{proof}
\subsection{Characteristic propagation and recovery estimates}
\label{subsec:characteristic}
The next lemma converts the geometric condition \(T>G_{\mathcal O}/|\mu|\)
into a quantitative recovery estimate for a profile transported with speed
\(\mu\). The proof is given in Appendix~\ref{lemm}. We emphasize that the negative
Sobolev estimate is not obtained by a formal dualization of the \(L^2\)
bound. It is proved by constructing a uniformly bounded right inverse of the
pull-back operator along the characteristics. The loss
\((T-T_\mu^*)^{-s-\frac12}\) comes from the lower bound on the time spent by
each characteristic in the thick observation set, together with the derivatives
of the associated cutoffs.
\begin{lemma}
\label{lem:quantitative}
Let \(d\ge1\), let \(\mu\neq0\), and assume that \(\mathcal O\) satisfies
\eqref{eq:gcc}. Set
\[
T_\mu^*:=\frac{G_{\mathcal O}}{|\mu|}.
\]
Let \(T_1>T_\mu^*\). Then, for every integer
\(s\ge0\), there exists a constant \(C_s(T_1)>0\), depending only on
\(s,d,\mu,\ell_{\mathcal O},G_{\mathcal O}\), and \(T_1\), such that, for every
\(T\in(T_\mu^*,T_1]\), with \(U_T:=(0,T)\times\mathcal O\), every
\(g\in H^{-s}(\mathbb R)^d\) satisfies
\begin{equation}
\label{eq:recovery}
\|g\|_{H^{-s}(\mathbb R)^d}
\le
\frac{C_s(T_1)}{(T-T_\mu^*)^{s+\frac12}}
\left\|
g(x+\mu(T-t))
\right\|_{H^{-s}(U_T)^d}.
\end{equation}
Here \(H^{-s}(U_T)^d\) denotes the dual of \(H_0^s(U_T)^d\). In particular, for
\(g\in L^2(\mathbb R)^d\), one has
\begin{equation}
\label{eq:L2-characteristic-recovery}
\|g\|_{L^2(\mathbb R)^d}^2
\le
\frac{C_0(T_1)}{T-T_\mu^*}
\int_0^T\int_{\mathcal O}
|g(x+\mu(T-t))|^2\,dx\,dt.
\end{equation}
\end{lemma}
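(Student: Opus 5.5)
We need to prove Lemma \ref{lem:quantitative}: recovery of a profile $g$ from its transport $g(x+\mu(T-t))$ restricted to $U_T=(0,T)\times\mathcal O$, in $L^2$ and in $H^{-s}$, with the loss $(T-T_\mu^*)^{-s-1/2}$.

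\textbf{The $L^2$ estimate.} Change variables: for fixed $t$, set $y=x+\mu(T-t)$. Fubini gives
\[
\int_0^T\int_{\mathcal O}|g(x+\mu(T-t))|^2\,dx\,dt=\int_{\mathbb R}|g(y)|^2\,m(y)\,dy,
\qquad
m(y):=\bigl|\{t\in(0,T):\ y-\mu(T-t)\in\mathcal O\}\bigr|.
\]
The set $\{y-\mu(T-t):t\in(0,T)\}$ is an interval $I_y$ of length $|\mu|T$, and $m(y)=|I_y\cap\mathcal O|/|\mu|$. Lemma~\ref{lem:density-bands} then yields
\[
m(y)\ge \frac{\gamma_{\mathcal O}}{|\mu|}\bigl(|\mu|T-G_{\mathcal O}\bigr)_+=\gamma_{\mathcal O}(T-T_\mu^*),
\]
which is \eqref{eq:L2-characteristic-recovery} with $C_0=\gamma_{\mathcal O}^{-1}$, in fact independent of $T_1$. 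Componentwise this holds in every dimension $d$.

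\textbf{The $H^{-s}$ estimate via a right inverse.} Following the hint, I will not dualize formally. Let $\Phi g(t,x)=g(x+\mu(T-t))$. I construct $E:H^s(\mathbb R)\to H^s_0(U_T)$ with $\Phi^*E=\mathrm{Id}$, where $\Phi^*$ is the $L^2$ adjoint, $(\Phi^*h)(y)=\int h(t,y-\mu(T-t))\,dt$, and $\|E\|\lesssim (T-T_\mu^*)^{-s-1/2}$. Then for $\varphi\in H^s$,
\[
\langle g,\varphi\rangle=\langle g,\Phi^*E\varphi\rangle=\langle \Phi g,E\varphi\rangle_{U_T},
\]
so $\|g\|_{H^{-s}}\le\|E\|\,\|\Phi g\|_{H^{-s}(U_T)}$. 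For the construction, set $\delta=T-T_\mu^*$ and use characteristic coordinates $(t,y)$, $x=y-\mu(T-t)$. Choose a smooth weight $\chi(t,y)\ge0$, compactly supported in $\{(t,y):t\in(0,T),\ y-\mu(T-t)\in\mathcal O\}$, and define
\[
E\varphi(t,x)=\frac{\chi(t,y)}{\int\chi(\tau,y)\,d\tau}\,\varphi(y).
\]
Then $\Phi^*E\varphi=\varphi$. To build $\chi$, take cutoffs adapted to the components $(a_n,b_n)$, shrunk by a margin $\sim\delta$ and also cut off near $t=0$ and $t=T$. The refined counting of Lemma~\ref{lem:density-bands} then gives each characteristic time $\gtrsim\delta$ inside the supports. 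Arrange the cutoffs to be invariant, up to translation, over a fixed local pattern, so that all the denominators are $\gtrsim\delta$ uniformly in $y$. If the components are long, subdivide them into pieces of bounded length so that the construction stays uniform.

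\textbf{Main obstacle.} The hard step is bounding the $H^s$ norms of $E\varphi$. Each derivative of the cutoffs costs $\delta^{-1}$, and the size factor $1/\int\chi\sim\delta^{-1}$ is balanced by the support measure $\sim\delta$, which yields an $L^2$ norm of order $\delta^{-1/2}$. Total loss: $\delta^{-s-1/2}$. The delicate point is keeping the normalized quotient smooth, with $y$-derivatives controlled uniformly, when $y$ crosses between different collections of components. I would first try a partition-of-unity in $y$ with scale $\sim\delta|\mu|$ to handle this. Constants must be uniform for $T\in(T_\mu^*,T_1]$, since $T_1$ bounds the geometry, and vector-valued $d$ is handled componentwise.
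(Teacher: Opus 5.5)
Your proposal follows the paper's proof essentially step for step. The $L^2$ bound comes from the occupation time $|I_y\cap\mathcal O|/|\mu|$ and Lemma~\ref{lem:density-bands}; your direct version is slightly cleaner and gives the constant $\gamma_{\mathcal O}^{-1}$. The $H^{-s}$ bound comes from the right inverse $\phi\mapsto\theta(t)\chi(x)\,m(x+\mu(T-t))^{-1}\phi(x+\mu(T-t))$ of the adjoint pull-back, built from product cutoffs with margins $\simeq\min\{\ell_{\mathcal O},|\mu|\tau\}$, where $\tau=T-T_\mu^*$.

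The step you flag as delicate needs neither a partition of unity nor a subdivision of long components. With the product cutoff, $m^{(j)}(y)=\int_0^T\theta(t)\chi^{(j)}(y-\mu(T-t))\,dt$ only sees the boundary layers of $\chi$. A characteristic crosses these at most $2+|\mu|T_1/\ell_{\mathcal O}$ times, for a total time $\lesssim\tau$. Hence $|m^{(j)}|\lesssim\tau^{1-j}\lesssim\tau^{-j}m$, and Fa\`a di Bruno then controls $1/m$. The undifferentiated term is handled by $\int_0^T\theta(t)^2\chi(y-\mu(T-t))^2\,dt\le m(y)$, not by a small support. Finally, the weight must be supported away from $\partial U_T$ rather than compactly supported; otherwise the normalization vanishes for large $|y|$.
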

We now use Lemma~\ref{lem:quantitative} to reduce the multi-speed observation
\(B^\top q\) to finitely many single-speed recovery estimates.
\subsection{Differential separation and high-frequency recovery}
\label{subsec:separation}

Since $A^\top$ is diagonalizable over $\mathbb R$, we write
\[
\mathbb R^N=\bigoplus_{\lambda\in\sigma(A)}E_\lambda,
\qquad
E_\lambda=\ker(A^\top-\lambda I).
\]
By the Hautus criterion, the Kalman rank condition is equivalent to
$\ker B^\top\cap E_\lambda=\{0\},
\quad \lambda\in\sigma(A).$
Thus $B^\top$ is injective on each characteristic eigenspace.
Let $p=\#\sigma(A)$ and, for $\lambda\in\sigma(A)$, define
\[
\mathcal D_\lambda
:=
\prod_{\substack{\nu\in\sigma(A)\\ \nu\neq\lambda}}
(\partial_t+\nu\partial_x).
\]
Each factor annihilates the characteristic family of speed $\nu$; hence
$\mathcal D_\lambda$ separates the family of speed $\lambda$. Formally,
\[
\mathcal D_\lambda(B^\top q)
=
c_\lambda B^\top \partial_x^{p-1}
q_T^{(\lambda)}(x+\lambda(T-t)),
\qquad
c_\lambda
:=
\prod_{\substack{\nu\in\sigma(A)\\ \nu\neq\lambda}}
(\nu-\lambda)\neq0.
\]
Thus the separation costs $p-1$ spatial derivatives.

For a fixed observation time \(T\), set
\(U:=U_T=(0,T)\times\mathcal O\) and \(r=p-1\). Here \(H_0^r(U)\) denotes the closure of \(C_c^\infty(U)\) in \(H^r(\mathbb R^2)\), and \(H^{-r}(U)\) is its Hilbert dual. The preceding identity is used in the weak sense: since
\(B^\top q\in L^2(U)^m\) and \(D_\lambda\) has order \(r\), one has
\[
D_\lambda(B^\top q)\in H^{-r}(U)^m.
\]
All identities involving \(D_\lambda(B^\top q)\) are understood in
\(\mathcal D'(U)^m\) and estimated in the \(H^{-r}(U)^m\)-norm.
The derivative loss is later removed on the high-frequency region by the
cutoff \(P_{\geq\rho}\).
\begin{proposition}
\label{prop:direct-recovery}
Assume that \(A^\top\) is diagonalizable over \(\mathbb R\), that \(0\notin\sigma(A)\), that \((A,B)\) satisfies the Kalman rank condition, and that \(\mathcal O\) satisfies \eqref{eq:gcc}. Let \(\lambda_1,\dots,\lambda_p\) be the distinct eigenvalues of \(A^\top\), and let \(T_{\rm geom}\) be the corresponding geometric time. For \(\rho>0\), define
\(P_{\ge\rho}\) by
\[
\widehat{P_{\ge\rho}u}(\xi)
=
\mathbf 1_{\{|\xi|\ge\rho\}}\widehat u(\xi).
\]
Then, for every \(T_1>T_{\rm geom}\), there exists
\(C_p(T_1,\rho)>0\), depending only on \(A\), \(B\), \(\mathcal O\), \(\rho\),
and \(T_1\), such that, for every \(T\in(T_{\rm geom},T_1]\) and every
\(q_T\in L^2(\mathbb R)^N\), the corresponding solution of
\eqref{adjoint-system} satisfies
\begin{equation}
\label{eq:localized-observability}
\|P_{\ge\rho}q_T\|_{L^2(\mathbb R)^N}^2
\le
\frac{C_p(T_1,\rho)}{(T-T_{\mathrm{geom}})^{2p-1}}
\int_0^T\int_\mathcal O |B^\top q(t,x)|^2\,dx\,dt.
\end{equation}
\end{proposition}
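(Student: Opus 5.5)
The strategy is to decompose the adjoint solution along the characteristic families, isolate each family from the observation with the differential separator \(\mathcal D_\lambda\), and recover its terminal profile through the single-speed estimate of Lemma~\ref{lem:quantitative} at negative regularity. The high-frequency cutoff then removes the derivative loss. Write \(q_T=\sum_{\lambda}\mathcal P_\lambda q_T\) with the spectral projectors of Section~\ref{subsec:medium-high}, so that
\[
q(t,x)=\sum_{\lambda\in\sigma(A)}(\mathcal P_\lambda q_T)(x+\lambda(T-t)).
\]
By density it suffices to take \(q_T\in\mathcal S(\mathbb R;\mathbb C^N)\), and moreover to replace \(q_T\) by \(P_{\ge\rho}q_T\). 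The left-hand side of \eqref{eq:localized-observability} only involves \(P_{\ge\rho}q_T\), but the observation on the right involves the full \(q\). I would therefore first prove the estimate for data already satisfying \(q_T=P_{\ge\rho}q_T\), and then handle the general case as discussed at the end.

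\textbf{Separation step.} Set \(r=p-1\). Since \(q_T\) is smooth, the identity
\[
\mathcal D_\lambda(B^\top q)=c_\lambda\,\bigl(B^\top\partial_x^{r}\mathcal P_\lambda q_T\bigr)(x+\lambda(T-t))
\]
holds classically, with \(c_\lambda\neq0\). Because \(\mathcal D_\lambda\) is a constant-coefficient operator of order \(r\), it maps \(L^2(U)\) boundedly into \(H^{-r}(U)\), where \(U=(0,T)\times\mathcal O\). This is checked by duality against \(H_0^r(U)\): integrate by parts, which is legitimate because test functions vanish near \(\partial U\). Hence
\[
\bigl\|g_\lambda(x+\lambda(T-t))\bigr\|_{H^{-r}(U)^m}\le C\,\|B^\top q\|_{L^2(U)^m},
\qquad g_\lambda:=B^\top\partial_x^{r}\mathcal P_\lambda q_T .
\]
Lemma~\ref{lem:quantitative} with \(\mu=\lambda\) and \(s=r\) applies since \(T>T_{\rm geom}\ge G_{\mathcal O}/|\lambda|=T^*_\lambda\). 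It gives
\[
\|g_\lambda\|_{H^{-r}(\mathbb R)}\le C(T_1)\,(T-T_\lambda^*)^{-r-\frac12}\,\|B^\top q\|_{L^2(U)}.
\]
We have \(T-T_\lambda^*\ge T-T_{\rm geom}\) and \(2(r+\tfrac12)=2p-1\), so after squaring the exponent \(2p-1\) appears as stated.

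\textbf{Removing the derivative loss.} On \(\{|\xi|\ge\rho\}\) one has \(|\xi|^{2r}(1+|\xi|^2)^{-r}\ge c_\rho>0\). Therefore, for \(\widehat q_T\) supported in \(|\xi|\ge\rho\),
\[
\|B^\top\mathcal P_\lambda q_T\|_{L^2}^2\le c_\rho^{-1}\,\|g_\lambda\|_{H^{-r}}^2 .
\]
By Hautus injectivity of \(B^\top\) on \(E_\lambda\), we get \(\|\mathcal P_\lambda q_T\|_{L^2}\lesssim\|B^\top\mathcal P_\lambda q_T\|_{L^2}\). Summing over the \(p\) eigenvalues and using \(\sum_\lambda\mathcal P_\lambda=I\) gives \eqref{eq:localized-observability} for data with \(q_T=P_{\ge\rho}q_T\).

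\textbf{Main obstacle.} The difficulty is the general case, where \(q_T\) also has low frequencies: the observation contains \(B^\top\) applied to \(P_{<\rho}q\), which cannot be removed from the right-hand side directly. My first attempt is to work frequency-wise on \(\mathcal D_\lambda(B^\top q)\). Its Fourier transform in \(x\) is \(c_\lambda(i\xi)^r\,e^{i\lambda(T-t)\xi}B^\top\mathcal P_\lambda\widehat q_T\), and its low-frequency part is a smooth function, so the lower bound of Lemma~\ref{lem:quantitative} can still be applied to the full profile \(\widetilde g_\lambda:=B^\top\partial_x^r\mathcal P_\lambda q_T\), which lies in \(H^{-r}\) whenever \(q_T\in L^2\). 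This yields \(\|\widetilde g_\lambda\|_{H^{-r}}\lesssim(T-T_{\rm geom})^{-r-1/2}\|B^\top q\|_{L^2(U)}\) with no cutoff needed. The high-frequency part is then estimated by the trivial inequality
\[
\|P_{\ge\rho}\widetilde g_\lambda\|_{H^{-r}}\le\|\widetilde g_\lambda\|_{H^{-r}},
\]
which holds because \(P_{\ge\rho}\) is an \(H^{-r}\)-contraction, and the same lower bound on \(\{|\xi|\ge\rho\}\) as above concludes. Thus the cutoff needs to be applied only after the recovery step, and the main care goes into justifying the weak identity for \(\mathcal D_\lambda(B^\top q)\) with \(L^2\) data by density.
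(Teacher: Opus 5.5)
Your final argument, in the ``main obstacle'' paragraph, is correct and is essentially the paper's proof: isolate each family with $\mathcal D_\lambda$, bound $\mathcal D_\lambda(B^\top q)$ in $H^{-r}(U)$, apply Lemma~\ref{lem:quantitative} with $s=r$ to the uncut profile $B^\top\partial_x^r\mathcal P_\lambda q_T\in H^{-r}(\mathbb R)$, use that $P_{\ge\rho}$ is an $H^{-r}$-contraction, invert $\partial_x^r$ on $\{|\xi|\ge\rho\}$, apply Hautus injectivity, and sum by the triangle inequality (the paper applies Hautus in $H^{-r}$ before inverting the derivative, which makes no difference). Your preliminary ``reduction'' to data with $q_T=P_{\ge\rho}q_T$ is not a valid reduction, but it is also unnecessary: applying the cutoff only after the recovery step handles general $L^2$ data directly, exactly as the paper does.
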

\begin{proof}
Fix \(T_1>T_{\rm geom}\), and let \(T\in(T_{\rm geom},T_1]\). Set
\[
r:=p-1,\qquad U_T:=(0,T)\times\mathcal O,
\qquad y:=(B^\top q)|_{U_T}.
\]
We first prove the identities below for terminal data
\(q_T\in\mathcal S(\mathbb R)^N\). In this case all characteristic
decompositions and differentiations are classical.

The extension to \(q_T\in L^2(\mathbb R)^N\) follows by density. Indeed, let
\(q_T^n\in\mathcal S(\mathbb R)^N\) be such that
\(q_T^n\to q_T\) in \(L^2(\mathbb R)^N\), and let \(q^n\) be the corresponding
adjoint solution. The observation map
\[
q_T\longmapsto B^\top q|_U
\]
is continuous from \(L^2(\mathbb R)^N\) to \(L^2(U)^m\). Hence
\(B^\top q^n\to B^\top q\) in \(L^2(U)^m\). Moreover, every differential
operator considered below is a constant-coefficient operator of order at most
\(r\), and therefore maps \(L^2(U)^m\) continuously into \(H^{-r}(U)^m\).
Consequently, the separated identities obtained for \(q_T^n\) pass to the
limit in \(H^{-r}(U)^m\), and hence in \(\mathcal D'(U)^m\).

\medskip
\noindent\emph{Step 1. Separation of one characteristic family.}

Since \(A^\top\) is diagonalizable over \(\mathbb R\), let
\(\mathcal P_{\lambda_\ell}\) be the spectral projector associated with
\(E_{\lambda_\ell}\). Thus
\[
I_N=\sum_{\ell=1}^p \mathcal P_{\lambda_\ell},
\quad
A^\top \mathcal P_{\lambda_\ell}=\lambda_\ell \mathcal P_{\lambda_\ell} .
\quad 
\text{Set}\quad 
q_T^{(\ell)}:=\mathcal P_{\lambda_\ell} q_T.
\quad \text{Then}
\]
\[
y(t,x)
:=B^\top q(t,x)=\sum_{\ell=1}^p Y_{\ell}(t,x)
=\sum_{\ell=1}^p
B^\top q_T^{(\ell)}(x+\lambda_\ell(T-t))
\quad
\text{in }L^2(U)^m.
\]

Fix \(\ell_0\in\{1,\dots,p\}\). Define the differential separation operator
\[
\mathcal D_{\lambda_{\ell_0}}
:=
\prod_{\ell\neq\ell_0}
(\partial_t+\lambda_\ell\partial_x),
\]
with the convention that \(\mathcal D_{\lambda_{\ell_0}}=\mathrm{Id}\) if \(p=1\). Since the factors have constant coefficients, they commute. Moreover, for every \(j,\ell\),
\[
(\partial_t+\lambda_\ell\partial_x)Y_j
=
(\lambda_\ell-\lambda_j)
B^\top\partial_x q_T^{(j)}(x+\lambda_j(T-t)).
\quad 
\text{In particular,}
\quad
(\partial_t+\lambda_j\partial_x)Y_j=0.
\]
Hence, if \(j\neq \ell_0\), the product defining \(\mathcal D_{\lambda_{\ell_0}}\) contains the annihilating factor
\(\partial_t+\lambda_j\partial_x\), and therefore
$\mathcal D_{\lambda_{\ell_0}}Y_j=0.$
On the other hand, for \(j=\ell_0\), each factor contributes
\((\lambda_\ell-\lambda_{\ell_0})\partial_x\). Thus
\[
\mathcal D_{\lambda_{\ell_0}}Y_{\ell_0}
=
c_{\ell_0}
B^\top\partial_x^r q_T^{(\ell_0)}
(x+\lambda_{\ell_0}(T-t)),
\quad 
\text{with}
\quad 
c_{\ell_0}
:=
\prod_{\ell\neq\ell_0}
(\lambda_\ell-\lambda_{\ell_0})\neq0.
\]
Hence
\begin{equation}\label{eq:isolated-family}
\mathcal D_{\lambda_{\ell_0}}y
=
c_{\ell_0}
B^\top\partial_x^r q_T^{(\ell_0)}
(x+\lambda_{\ell_0}(T-t))
\qquad
\text{in }\mathcal D'(U)^m .
\end{equation}
\medskip
\noindent\emph{Step 2. Recovery along the observed characteristic tube.}
Expanding
\[
\mathcal D_{\lambda_{\ell_0}}
=
\prod_{\ell\neq\ell_0}(\partial_t+\lambda_\ell\partial_x)
\]
gives a linear combination of derivatives
$\partial_t^{r-j}\partial_x^j,\qquad 0\le j\le r,$
with coefficients depending only on the speeds
\(\lambda_\ell\), \(\ell\neq\ell_0\). Hence
\(\mathcal D_{\lambda_{\ell_0}}\) is continuous from \(L^2(U)^m\) to
\(H^{-r}(U)^m\). 
Therefore, there exists \(S_{\ell_0}>0\) such that
\begin{equation}\label{eq:operator-nega}
\|\mathcal D_{\lambda_{\ell_0}}y\|_{H^{-r}(U)^m}
\le
S_{\ell_0}\|y\|_{L^2(U)^m}.
\end{equation}
Combining \eqref{eq:operator-nega} with \eqref{eq:isolated-family} yields
\begin{equation}\label{eq:observed-tube}
\left\|
B^\top\partial_x^r q_T^{(\ell_0)}
(x+\lambda_{\ell_0}(T-t))
\right\|_{H^{-r}(U)^m}
\leq
\frac{S_{\ell_0}}{|c_{\ell_0}|}
\|B^\top q\|_{L^2(U)^m}.
\end{equation}
We use Lemma~\ref{lem:quantitative} in its negative Sobolev form, with
\[
s=r,\qquad
\mu=\lambda_{\ell_0},\qquad
g=B^\top\partial_x^r q_T^{(\ell_0)}\in H^{-r}(\mathbb R)^m.
\]
Since
\[
T_{\lambda_{\ell_0}}^*
:=
\frac{G_{\mathcal O}}{|\lambda_{\ell_0}|}
\leq
T_{\mathrm{geom}},
\]
we have \(T-T_{\lambda_{\ell_0}}^*\geq T-T_{\mathrm{geom}}>0\). Hence the lemma,
together with \eqref{eq:observed-tube}, gives
\begin{equation}\label{eq:B-derivative-recovered}
\left\|
B^\top\partial_x^r q_T^{(\ell_0)}
\right\|_{H^{-r}(\mathbb R)^m}
\leq
\frac{
C_r^{\lambda_{\ell_0}}(T_1)S_{\ell_0}
}
{
|c_{\ell_0}|(T-T_{\mathrm{geom}})^{r+\frac{1}{2}}
}
\|B^\top q\|_{L^2(U)^m},
\end{equation}
where \(C_r^{\lambda_{\ell_0}}(T_1)\) denotes the constant in
Lemma~\ref{lem:quantitative}, corresponding to \(\mu=\lambda_{\ell_0}\), valid
uniformly for \(T\in(T_{\rm geom},T_1]\).

Finally, set
\[
q_T^{\geq\rho,(\ell_0)}
:=
P_{\geq\rho}q_T^{(\ell_0)}
.
\]
Since \(P_{\geq\rho}\) is a scalar Fourier multiplier, it commutes with
\(\partial_x\), and \(B^\top\). Moreover,
\(P_{\geq\rho}\) is a contraction on \(H^{-r}(\mathbb R)\). Thus
\begin{equation}\label{eq:B-derivative-cutoff}
\left\|
B^\top\partial_x^r q_T^{\geq\rho,(\ell_0)}
\right\|_{H^{-r}(\mathbb R)^m}
\leq
\frac{
C_r^{\lambda_{\ell_0}}(T_1)S_{\ell_0}
}
{
|c_{\ell_0}|(T-T_{\mathrm{geom}})^{r+\frac{1}{2}}
}
\|B^\top q\|_{L^2(U)^m}.
\end{equation}
\medskip
\noindent\emph{Step 3. Removal of the observation matrix by Hautus injectivity.}
We now remove \(B^\top\). By the Hautus criterion, the Kalman rank condition gives
$\ker(B^\top)\cap E_{\lambda_{\ell_0}}=\{0\}.$
Hence \(B^\top\) is injective on \(E_{\lambda_{\ell_0}}\). Its complexification
is therefore injective on
\[
E_{\lambda_{\ell_0}}^{\mathbb C}
:=
E_{\lambda_{\ell_0}}\otimes_{\mathbb R}\mathbb C
=
\{v_1+iv_2:\ v_1,v_2\in E_{\lambda_{\ell_0}}\}.
\]
Since \(E_{\lambda_{\ell_0}}^{\mathbb C}\) is finite-dimensional, there exists
\[
\alpha_{\ell_0}
:=
\min_{\substack{v\in E_{\lambda_{\ell_0}}^{\mathbb C}\\ |v|=1}}
|B^\top v|>0,
\quad
\text{and hence}
\quad 
|v|\leq \alpha_{\ell_0}^{-1}|B^\top v|,
\quad
v\in E_{\lambda_{\ell_0}}^{\mathbb C}.
\quad 
\text{Set}
\quad 
u_{\ell_0}:=\partial_x^r q_T^{\geq\rho,(\ell_0)}.
\]
Since \(q_T^{\geq\rho,(\ell_0)}\) is the \(\lambda_{\ell_0}\)-spectral
component of \(P_{\geq\rho}q_T\), we have
\[
\widehat u_{\ell_0}(\xi)\in E_{\lambda_{\ell_0}}^{\mathbb C}
\qquad\text{for a.e. }\xi.
\]
Equivalently,
\[
\widehat u_{\ell_0}(\xi)
=
\widehat u_{\ell_0}^{\,1}(\xi)
+
i\widehat u_{\ell_0}^{\,2}(\xi),
\quad
\widehat u_{\ell_0}^{\,1}(\xi)\hbox{, }
\widehat u_{\ell_0}^{\,2}(\xi)
\in E_{\lambda_{\ell_0}}
\quad\text{for a.e. }\xi.
\]
Therefore,
\[
|\widehat u_{\ell_0}(\xi)|
\leq
\alpha_{\ell_0}^{-1}
|B^\top\widehat u_{\ell_0}(\xi)|
\qquad\text{for a.e. }\xi.
\]
Multiplying by \((1+|\xi|^2)^{-r/2}\), squaring, and integrating in \(\xi\),
we obtain
\[
\|u_{\ell_0}\|_{H^{-r}(\mathbb R)^N}
\leq
\alpha_{\ell_0}^{-1}
\|B^\top u_{\ell_0}\|_{H^{-r}(\mathbb R)^m}.
\]
Combining this with \eqref{eq:B-derivative-cutoff}, we obtain
\begin{equation}\label{eq:derivative-recovered}
\left\|
\partial_x^r q_T^{\geq\rho,(\ell_0)}
\right\|_{H^{-r}(\mathbb R)^N}
\leq
\frac{
C_r^{\lambda_{\ell_0}}(T_1)S_{\ell_0}
}
{
\alpha_{\ell_0}|c_{\ell_0}|(T-T_{\mathrm{geom}})^{r+\frac{1}{2}}
}
\|B^\top q\|_{L^2(U)^m}.
\end{equation}
\medskip
\noindent\emph{Step 4. Inversion of the derivative on high frequencies and summation over modes.}

It remains to recover \(q_T^{\geq\rho,(\ell_0)}\) from its \(r\)-th
derivative. If \(\operatorname{supp}\widehat u\subset\{|\xi|\geq\rho\}\), then
\[
\|\partial_x^r u\|_{H^{-r}}^2
=
\int_{\mathbb R}
\left(\frac{|\xi|^2}{1+|\xi|^2}\right)^r
|\widehat u(\xi)|^2\,d\xi
\geq
\left(\frac{\rho^2}{1+\rho^2}\right)^r
\|u\|_{L^2}^2.
\]
Thus
\[
\|u\|_{L^2}
\leq
\left(\frac{1+\rho^2}{\rho^2}\right)^{r/2}
\|\partial_x^r u\|_{H^{-r}}.
\]
Applying this to \(u=q_T^{\geq\rho,(\ell_0)}\) and using
\eqref{eq:derivative-recovered}, we obtain
\begin{equation}\label{eq:component}
\|q_T^{\geq\rho,(\ell_0)}\|_{L^2(\mathbb R)^N}
\leq
\frac{
K_{\ell_0}(T_1,\rho)
}
{
(T-T_{\mathrm{geom}})^{r+\frac{1}{2}}
}
\|B^\top q\|_{L^2(U)^m}, \quad \text{where}\quad K_{\ell_0}(T_1,\rho)
:=
\left(\frac{1+\rho^2}{\rho^2}\right)^{r/2}
\frac{
C_r^{\lambda_{\ell_0}}(T_1)S_{\ell_0}
}
{
\alpha_{\ell_0}|c_{\ell_0}|
}.
\end{equation}
Since the spectral projectors need not be orthogonal, we use only the
triangle inequality. Namely,
\[
P_{\geq\rho}q_T
=
\sum_{\ell=1}^p q_T^{\geq\rho,(\ell)}.
\]
Therefore, using \eqref{eq:component} for each \(\ell\), and recalling that
\(r=p-1\), we get
\[
\|P_{\geq\rho}q_T\|_{L^2(\mathbb R)^N}
\leq
\sum_{\ell=1}^p
\|q_T^{\geq\rho,(\ell)}\|_{L^2(\mathbb R)^N}
\leq
\frac{
\sum_{\ell=1}^p K_\ell(T_1,\rho)
}
{
(T-T_{\mathrm{geom}})^{p-\frac{1}{2}}
}
\|B^\top q\|_{L^2(U)^m}.
\]
Squaring gives
\[
\|P_{\geq\rho}q_T\|_{L^2(\mathbb R)^N}^2
\leq
\frac{
C_p(T_1,\rho)
}
{
(T-T_{\mathrm{geom}})^{2p-1}
}
\int_0^T\int_\mathcal O |B^\top q(t,x)|^2\,dx\,dt,
\quad 
\text{with}
\quad 
C_p(T_1,\rho)
:=
\left(\sum_{\ell=1}^p K_\ell(T_1,\rho)\right)^2,
\]
for some constant \(C_p(T_1,\rho)>0\) depending only on  \(A,B,\mathcal O,\rho\), and \(T_1\).
This proves \eqref{eq:localized-observability}.
\end{proof}
Proposition~\ref{prop:direct-recovery} completes the nonzero-frequency part of the localized argument. It shows that, once \(|\xi|\ge\rho\), the geometric threshold and the Hautus injectivity are sufficient to recover the full terminal state from the original observation \(B^\top q\).

To extend the preceding $L^2$-based recovery estimate to terminal data in $\mathcal Y_K^-,$ we use a frequency truncation argument. We first record the admissibility of the localized observation map on \(\mathcal Y_K^-\).
\begin{lemma}
\label{lem:YK-admissibility}
Let $T>0$. The observation map
$q_T\longmapsto B^\top q$
is continuous from $\mathcal Y_K^-$ into $L^2((0,T)\times\mathcal O)^m$. More precisely,
there exists $C_{\mathrm{adm}}(T)>0$ such that
\[
\|B^\top q\|_{L^2((0,T)\times\mathcal O)^m}
\le
C_{\mathrm{adm}}(T)\|q_T\|_{\mathcal Y_K^-} .
\]
\end{lemma}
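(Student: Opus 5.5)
Since the observation on \((0,T)\times\mathcal O\) is bounded by the observation on \((0,T)\times\mathbb R\), it suffices to prove the estimate with \(\mathcal O=\mathbb R\). There Plancherel reduces everything to the Gramian: for \(q_T\in\mathcal S(\mathbb R;\mathbb C^N)\),
\[
\int_0^T\int_{\mathbb R}|B^\top q|^2\,dx\,dt
=\frac1{2\pi}\int_{\mathbb R}\widehat q_T(\xi)^*\mathcal G_T(\xi)\widehat q_T(\xi)\,d\xi .
\]
The plan is therefore to prove the pointwise matrix bound
\[
z^*\mathcal G_T(\xi)z\le C(T)\sum_{k=0}^K\rho_k(\xi)\|\Pi_k z\|^2,
\qquad z\in\mathbb C^N,\ \xi\in\mathbb R .
\]
Integrating this in \(\xi\) gives the claim on the Schwartz class. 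The extension to \(\mathcal Y_K^-\) then follows by density, which is the definition of the adjoint flow on the completion.

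\textbf{Step 1: high frequencies.} For \(|\xi|\ge1\) all weights satisfy \(\rho_k=1\). Since \(A\) is diagonalizable with real spectrum, \(\|e^{is\xi A^\top}\|\le \kappa(A)\) uniformly in \(s\) and \(\xi\), where \(\kappa\) is the condition number of the diagonalizing matrix. Hence
\[
z^*\mathcal G_T z\le T\|B\|^2\kappa^2\|z\|^2=T\|B\|^2\kappa^2\sum_k\|\Pi_k z\|^2,
\]
using the orthogonality of the \(\Pi_k\).

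\textbf{Step 2: low frequencies.} For \(|\xi|\le1\), split \(z=\sum_k\Pi_k z\) and use \(\|\sum_k a_k\|^2\le (K+1)\sum_k\|a_k\|^2\). It then suffices to show
\[
\|B^\top e^{is\xi A^\top}v\|\le C(T)|\xi|^k\|v\|,\qquad v\in\mathcal V_k^{\mathbb C},\ s\in[0,T].
\]
Expand the exponential as a Taylor series with remainder at order \(k\). The terms of order \(\ell<k\) vanish by the triangular relation \(B^\top(A^\top)^\ell\Pi_k=0\). The integral form of the remainder gives
\[
\|B^\top e^{is\xi A^\top}v\|\le \|B\|\,\|A\|^k\frac{(s|\xi|)^k}{k!}\sup_\sigma\|e^{i\sigma A^\top}\|\,\|v\|\le C\,T^k|\xi|^k\|v\|,
\]
again using the uniform bound on the propagator. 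Squaring and integrating over \(s\in[0,T]\) yields the bound \(CT^{2k+1}|\xi|^{2k}=CT^{2k+1}\rho_k(\xi)\). This is the upper half of Corollary~\ref{cor:layerwise-asymptotics}, but now valid on all of \(|\xi|\le1\) rather than only for \(T|\xi|\) small. This uniformity is why I use the remainder bound rather than quoting the corollary.

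\textbf{Main obstacle.} The only delicate point is this low-frequency step. It requires the Taylor remainder to be controlled uniformly without any smallness of \(T|\xi|\), which is exactly what the real diagonalizability of \(A\) provides. The constant produced is
\[
C_{\rm adm}(T)^2\lesssim \max\{T,T^{2K+1}\}.
\]
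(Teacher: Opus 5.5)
Your argument is correct, but it follows a different route from the paper's proof of this lemma.

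\textbf{The paper's route.} The paper argues statically in time. Since $B^\top\mathcal V_k=\{0\}$ for $k\ge1$, one has $B^\top u=B^\top\Pi_0u$, and because $\rho_0\equiv1$ this gives $\|B^\top u\|_{L^2(\mathbb R)}\lesssim\|u\|_{\mathcal Y_K^-}$ at each fixed time. The time integral is then handled by the boundedness of the adjoint flow on $\mathcal Y_K^-$ (Lemma~\ref{bounded}), which yields $C_{\mathrm{adm}}(T)^2\lesssim T\,C_{\mathrm{evo}}(T)^2$.

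\textbf{Your route.} You bound the Gramian from above pointwise in $\xi$ by the Kalman weights. This places the low-frequency cancellation directly on $B^\top e^{is\xi A^\top}\Pi_k$. It is exactly the argument the paper uses later, in Appendix~\ref{app:norm-equivalence}, for the upper bound of Corollary~\ref{equivalent-norm}.

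\textbf{What each buys.}
\begin{itemize}
\item The paper's approach is modular. The observation is a zero-order operator at each time, and all the Kalman cancellation sits inside the flow-stability lemma, which HUM needs anyway. However, that lemma requires the full block-triangular structure $\Pi_k(A^\top)^m\Pi_j=0$ for $m<j-k$, and it leaves $C_{\mathrm{evo}}(T)$ implicit.
\item Your approach uses only the one-sided relations $B^\top(A^\top)^\ell\Pi_k=0$ for $\ell<k$, and it is self-contained. It gives the explicit bound $C_{\mathrm{adm}}(T)^2\lesssim\max\{T,T^{2K+1}\}$. This directly justifies the paper's later claim that $C_{\mathrm{adm}}$ can be taken uniform for $0<T\le T_1$.
\end{itemize}

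\textbf{A small correction to your closing remark.} On $|\xi|\le1$ with $s\in[0,T]$, the Taylor remainder is already bounded by $e^{T\|A\|}$, with no structural assumption on $A$. So the low-frequency step is not where diagonalizability is essential. Real diagonalizability is genuinely needed in Step~1, for uniformity as $|\xi|\to\infty$. It is also what makes your constant polynomial rather than exponential in $T$.
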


\begin{proof}
We first define the observation operator on \(\mathcal Y_K^-\). For
\(u\in \mathcal S(\mathbb R)^N\), the Kalman-layer orthogonality gives, for
\(k\ge1\),
\[
\mathcal V_k\subset \mathcal K_{k-1}^{\perp},
\qquad
\operatorname{Ran}B=\mathcal K_0\subset \mathcal K_{k-1}.
\]
Hence \(B^\top\mathcal V_k=\{0\}\) for every \(k\ge1\), and therefore
$B^\top u=B^\top\Pi_0u.$
Since the weight on \(\mathcal V_0\) in the \(\mathcal Y_K^-\)-norm is equal
to \(1\), we have
\[
\|\Pi_0u\|_{L^2(\mathbb R)^N}
\le
\|u\|_{\mathcal Y_K^-}.
\quad
\text{Consequently,}
\quad 
\|B^\top\Pi_0u\|_{L^2(\mathbb R)^m}
\le
C\|u\|_{\mathcal Y_K^-}.
\]
Thus the map \(u\mapsto B^\top u\), initially defined on
\(\mathcal S(\mathbb R)^N\), extends uniquely and continuously from
\(\mathcal Y_K^-\) to \(L^2(\mathbb R)^m\), and this extension is given by
$B^\top u:=B^\top\Pi_0u,
\quad u\in\mathcal Y_K^-.$

Using the boundedness of the homogeneous adjoint evolution on
\(\mathcal Y_K^-\), namely
\[
\sup_{0\le t\le T}\|q(t)\|_{\mathcal Y_K^-}
\le
C_{\mathrm{evo}}(T)\|q_T\|_{\mathcal Y_K^-},
\]
we obtain
\[
\begin{aligned}
\|B^\top q\|_{L^2((0,T)\times\mathcal O)^m}^2
&\le
\int_0^T
\|B^\top q(t)\|_{L^2(\mathbb R)^m}^2\,dt  \\
&\le
CT\,C_{\mathrm{evo}}(T)^2
\|q_T\|_{\mathcal Y_K^-}^2 .
\end{aligned}
\]
This proves the admissibility estimate.
\end{proof}
Consequently, for every \(T_1>0\), the admissibility constant can be chosen uniformly for \(0<T\le T_1\): there exists \(C_{\mathrm{adm}}(T_1)>0\) such that
\[
\|B^\top q\|_{L^2((0,T)\times\mathcal O)^m}
\le
C_{\mathrm{adm}}(T_1)\|q_T\|_{\mathcal Y_K^-},
\qquad 0<T\le T_1 .
\]
For \(q_T\in\mathcal Y_K^-\), the observation \(B^\top q\) is understood as the continuous extension of the classical observation map defined on \(\mathcal S(\mathbb R)^N\): if \(q_T^n\to q_T\) in \(\mathcal Y_K^-\), with \(q_T^n\in\mathcal S(\mathbb R)^N\), then \(B^\top q^n\) converges in \(L^2((0,T)\times\mathcal O)^m\), and the limit is independent of the approximating sequence. We still denote this limit by \(B^\top q\).
We now combine Proposition~\ref{prop:direct-recovery} with the admissibility
lemma to extend the estimate from \(L^2\) terminal data to arbitrary data in
\(\mathcal Y_K^-\).
\begin{corollary}\label{recovvv}
Under the assumptions of Proposition~\ref{prop:direct-recovery}, let
\(\rho>0\) and \(T_1>T_{\mathrm{geom}}\). Then there exists a constant
\(C_p(T_1,\rho)>0\), depending only on \(A,B,\mathcal O,\rho\), and \(T_1\),
such that, for every \(T\in(T_{\mathrm{geom}},T_1]\), every
\(q_T\in \mathcal Y_K^-\), and every corresponding \(\mathcal Y_K^-\)-solution
\(q\), one has
\begin{equation}
\|P_{\ge\rho}q_T\|_{L^2(\mathbb R)^N}^2
\le
\frac{C_p(T_1,\rho)}
{(T-T_{\mathrm{geom}})^{2p-1}}
\int_0^T\int_\mathcal O |B^\top q(t,x)|^2\,dx\,dt .
\end{equation}
Here $p=\#\sigma(A)$ is the number of distinct characteristic speeds.
\end{corollary}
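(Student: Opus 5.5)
The plan is to deduce Corollary~\ref{recovvv} from Proposition~\ref{prop:direct-recovery} by a density argument in \(\mathcal Y_K^-\), with Lemma~\ref{lem:YK-admissibility} handling the right-hand side and a comparison of weights away from \(\xi=0\) handling the left-hand side. First we check that \(P_{\ge\rho}q_T\) is a genuine \(L^2\) function for every \(q_T\in\mathcal Y_K^-\). On \(\{|\xi|\ge\rho\}\) one has \(\rho_k(\xi)\ge\min\{\rho^{2k},1\}\ge\min\{\rho^{2K},1\}=:m_\rho>0\). Since the projectors \(\Pi_k\) are orthogonal and sum to the identity,
\[
\|P_{\ge\rho}u\|_{L^2(\mathbb R)^N}^2
=\sum_{k=0}^K\int_{|\xi|\ge\rho}|\Pi_k\widehat u(\xi)|^2\,d\xi
\le m_\rho^{-1}\|u\|_{\mathcal Y_K^-}^2
\]
for \(u\in\mathcal S(\mathbb R;\mathbb C^N)\). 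Hence \(P_{\ge\rho}\) extends continuously from \(\mathcal Y_K^-\) into \(L^2(\mathbb R)^N\), which gives the left-hand side of the corollary its meaning.

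Next, fix \(T\in(T_{\mathrm{geom}},T_1]\) and \(q_T\in\mathcal Y_K^-\), and choose \(q_T^n\in\mathcal S(\mathbb R;\mathbb C^N)\) with \(q_T^n\to q_T\) in \(\mathcal Y_K^-\). Each \(q_T^n\) lies in \(L^2(\mathbb R)^N\), so Proposition~\ref{prop:direct-recovery} applies to the corresponding solutions \(q^n\). It gives the estimate with the same constant \(C_p(T_1,\rho)\) and the same factor \((T-T_{\mathrm{geom}})^{-(2p-1)}\) for every \(n\). If the proposition is read for real-valued data, we apply it separately to the real and imaginary parts, since the whole problem has real coefficients.

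We then pass to the limit on each side. On the left, the first step gives \(P_{\ge\rho}q_T^n\to P_{\ge\rho}q_T\) in \(L^2\). On the right, Lemma~\ref{lem:YK-admissibility} and the definition of \(B^\top q\) for \(\mathcal Y_K^-\)-data as a continuous extension give \(B^\top q^n\to B^\top q\) in \(L^2((0,T)\times\mathcal O)^m\). Letting \(n\to\infty\) yields the claimed inequality, with a constant that does not depend on \(T\) or on \(q_T\).

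The main point needing care is that the two extensions are consistent: the observation \(B^\top q\) attached to \(q_T\in\mathcal Y_K^-\), and the function \(P_{\ge\rho}q_T\), must both be the limits of the Schwartz approximations. The observation is consistent because it is defined precisely as such a limit. The cutoff is consistent because, for \(q_T\in\mathcal Y_K^-\cap L^2\), convergence in \(\mathcal Y_K^-\) implies convergence in \(L^2\) on \(\{|\xi|\ge\rho\}\), so the extended \(P_{\ge\rho}\) agrees with the usual Fourier cutoff there.
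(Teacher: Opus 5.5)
Your proof is correct and is essentially the paper's argument: a density step in $\mathcal Y_K^-$, closed by the admissibility Lemma~\ref{lem:YK-admissibility} on the right-hand side. The only difference is the choice of approximants. The paper uses the truncations $P_{|\xi|\ge\varepsilon}q_T$ with $\varepsilon<\rho$, which already lie in $L^2$ and leave the left-hand side unchanged. Your Schwartz approximants instead require the continuity of $P_{\ge\rho}$ from $\mathcal Y_K^-$ into $L^2$, which you prove correctly.
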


\begin{proof}
Let $q_T\in \mathcal Y_K^-,$ and let $q$ be the corresponding homogeneous adjoint
solution. Fix $0<\varepsilon<\rho,$ and set
$q_{T,\varepsilon}:=P_{|\xi|\ge\varepsilon}q_T.$
Since the weights defining $\mathcal Y_K^-$ are bounded below on
$\{|\xi|\ge\varepsilon\},$ we have
$q_{T,\varepsilon}\in L^2(\mathbb R)^N.$
Let \(q_\varepsilon\) be the solution with terminal datum \(q_{T,\varepsilon}\).
Because the equation has constant coefficients,
$q_\varepsilon=P_{|\xi|\ge\varepsilon}q.$
Applying Proposition~\ref{prop:direct-recovery} to \(q_\varepsilon\)
gives
\[
\|P_{\ge\rho}q_{T,\varepsilon}\|_{L^2(\mathbb R)^N}^2
\le
\frac{C_p(T_1,\rho)}{(T-T_{\mathrm{geom}})^{2p-1}}
\int_0^T\int_\mathcal O |B^\top q_\varepsilon(t,x)|^2\,dx\,dt .
\]
Since \(0<\varepsilon<\rho\), we have
\[
P_{\ge\rho}q_{T,\varepsilon}=P_{\ge\rho}q_T.
\quad 
\text{Moreover,}
\quad
q_{T,\varepsilon}\to q_T
\qquad\text{in }\mathcal Y_K^-
\]
as \(\varepsilon\downarrow0\). By Lemma~\ref{lem:YK-admissibility},
\[
B^\top q_\varepsilon\to B^\top q
\qquad\text{in }L^2((0,T)\times\mathcal O)^m.
\]
Passing to the limit $\varepsilon\downarrow0$ gives
\[
\|P_{\ge\rho}q_T\|_{L^2(\mathbb R)^N}^2
\le
\frac{C_p(T_1,\rho)}
{(T-T_{\mathrm{geom}})^{2p-1}}
\int_0^T\int_\mathcal O |B^\top q(t,x)|^2\,dx\,dt,
\]
which proves the claim.
\end{proof}
\subsection{Low-frequency localization}
\label{low-frequency-localization}
We now localize the bounded-frequency part by combining the whole-line Kalman-weighted observability estimate with the Logvinenko--Sereda inequality.
\begin{proposition}
\label{prop:bounded-frequency-localization}
Assume that \(A^\top\) is diagonalizable over \(\mathbb R\), that \((A,B)\) satisfies 
the Kalman rank condition, and that \(\mathcal O\) satisfies 
\eqref{eq:gcc}. Let \(\rho>0\). Then, for every 
\(T>0\), there exists \(C_{LF}(T,\rho)=C(A,B,K,T,\mathcal O,\rho)>0\) such that, for every 
\(q_T\in\mathcal Y_K^-\), if
$q_T^{LF}:=P_{|\xi|\le \rho}q_T$
and if \(q^{LF}\) denotes the corresponding adjoint solution, then
\[
\|q_T^{LF}\|_{\mathcal Y_K^-}^2
\leq
C_{LF}(T,\rho)
\int_0^T\int_{\mathcal O}
|B^\top q^{LF}(t,x)|^2\,dx\,dt .
\]
\end{proposition}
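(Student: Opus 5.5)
Since \(\widehat{q_T^{LF}}\) is supported in \([-\rho,\rho]\) and the adjoint flow is a Fourier multiplier, \(q^{LF}(t,\cdot)\) has Fourier support in \([-\rho,\rho]\) for every \(t\in[0,T]\). The same holds for \(B^\top q^{LF}(t,\cdot)\), componentwise. The plan has two steps. First, apply the Logvinenko--Sereda (Kovrijkine) spectral inequality at each fixed time to bound the whole-line observation \(\int_{\mathbb R}|B^\top q^{LF}(t,x)|^2dx\) by the localized one on \(\mathcal O\). Second, invoke the whole-line Kalman-weighted estimate, Theorem~\ref{thm:global}, to bound \(\|q_T^{LF}\|_{\mathcal Y_K^-}^2\) by the whole-line observation.

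Concretely, we use that \(\mathcal O\) is \((\gamma_{\mathcal O},L_{\mathcal O})\)-thick, which follows from \eqref{eq:gcc} as recalled in the introduction. Kovrijkine's theorem then gives, for every \(f\in L^2(\mathbb R)\) with \(\operatorname{supp}\widehat f\subset[-\rho,\rho]\),
\[
\|f\|_{L^2(\mathbb R)}^2\le C_{LS}\|f\|_{L^2(\mathcal O)}^2,
\qquad
C_{LS}=\Bigl(\tfrac{C_0}{\gamma_{\mathcal O}}\Bigr)^{2(C_0\rho L_{\mathcal O}+1)},
\]
with \(C_0\) universal. We apply this for each \(t\in(0,T)\) to \(f=(B^\top q^{LF}(t,\cdot))_j\), sum over \(j=1,\dots,m\), and integrate in \(t\). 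Combined with Theorem~\ref{thm:global} applied to \(q_T^{LF}\), this gives
\[
\|q_T^{LF}\|_{\mathcal Y_K^-}^2\le C_*\max\{T^{-1},T^{-(2K+1)}\}\,C_{LS}\int_0^T\!\!\int_{\mathcal O}|B^\top q^{LF}|^2\,dx\,dt,
\]
so we may take \(C_{LF}(T,\rho):=C_*C_{LS}\max\{T^{-1},T^{-(2K+1)}\}\). Theorem~\ref{thm:global} holds for all \(T>0\), so no geometric threshold enters. This matches the statement, in which the proposition is asserted for every \(T>0\).

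The main obstacle is justification rather than estimation. The datum \(q_T\in\mathcal Y_K^-\) is only an element of the completion, and the low-frequency part of a deep layer need not be in \(L^2\). So we cannot apply Logvinenko--Sereda to \(q^{LF}\) itself, only to \(B^\top q^{LF}\). By Lemma~\ref{lem:YK-admissibility}, \(B^\top q^{LF}\) is a genuine \(L^2\) function, and its Fourier transform \(B^\top e^{is\xi A^\top}\widehat{q_T^{LF}}\) is still supported in \([-\rho,\rho]\).

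To make this rigorous, we first run the argument for Schwartz data \(q_T^n\to q_T\) in \(\mathcal Y_K^-\), where \(P_{|\xi|\le\rho}q_T^n\) is in \(L^2\) and everything is classical. The truncation \(P_{|\xi|\le\rho}\) is a contraction on \(\mathcal Y_K^-\), since the norm is a Fourier-weighted \(L^2\) norm with frequency-independent projectors \(\Pi_k\). Hence \(P_{|\xi|\le\rho}q_T^n\to q_T^{LF}\) in \(\mathcal Y_K^-\). By admissibility on both \(\mathbb R\) and \(\mathcal O\), the right-hand sides converge as well, and we pass to the limit.

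One further point to check is that the sharp-cutoff band limitation is preserved under the \(L^2\) limit in \(t,x\). This holds because the space of \(L^2\) functions with Fourier support in \([-\rho,\rho]\) is closed.
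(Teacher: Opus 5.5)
Your proposal is correct and follows the paper's proof. Both arguments use the same chain of steps: the flow preserves Fourier support in $[-\rho,\rho]$; Logvinenko--Sereda is applied componentwise to $B^\top q^{LF}(t,\cdot)$ and integrated in time; Theorem~\ref{thm:global} is invoked on $q_T^{LF}$; and density together with admissibility handles general $\mathcal Y_K^-$ data. Your version differs only in bookkeeping: you use the thickness pair $(\gamma_{\mathcal O},L_{\mathcal O})$ instead of the paper's $(\gamma_0,L_0)$, you give an explicit constant $C_{LF}(T,\rho)$, and you spell out the density step more carefully.
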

\begin{proof}
Since the adjoint equation has constant coefficients, the Fourier support is preserved 
by the flow. Hence, for every $t\in(0,T),$
\[
\operatorname{supp}\widehat{B^\top q^{LF}}(t,\cdot)
\subset [-\rho,\rho].
\]
By Lemma~\ref{lem:density-bands}, the set \(\mathcal O\) is 
\((\gamma_0,L_0)\)-thick, with
\[
L_0:=2G_{\mathcal O}+\ell_{\mathcal O},
\quad
\gamma_0:=\frac{\ell_{\mathcal O}}{2G_{\mathcal O}+\ell_{\mathcal O}}.
\]
Therefore, the Logvinenko--Sereda inequality 
\cite{LogvinenkoSereda1974,Kovrijkine2001} gives
\[
\|f\|_{L^2(\mathbb R)}^2
\leq
C_{LS}(\rho,\mathcal O)
\|f\|_{L^2(\mathcal O)}^2
\]
for every $f\in L^2(\mathbb R)$ such that 
$\operatorname{supp}\widehat f\subset[-\rho,\rho].$ Applying this estimate 
componentwise to \(f=B^\top q^{LF}(t,\cdot)\), and integrating in time, yields
\[
\int_0^T\int_{\mathbb R}
|B^\top q^{LF}(t,x)|^2\,dx\,dt
\leq
C_{LS}(\rho,\mathcal O)
\int_0^T\int_{\mathcal O}
|B^\top q^{LF}(t,x)|^2\,dx\,dt .
\]
On the other hand, the whole-line Kalman-weighted observability estimate gives
\[
\|q_T^{LF}\|_{\mathcal Y_K^-}^2
\leq
C_T
\int_0^T\int_{\mathbb R}
|B^\top q^{LF}(t,x)|^2\,dx\,dt .
\]
Combining the last two estimates gives the desired inequality, with
$C_{LF}(T,\rho)
:=
C_T\,C_{LS}(\rho,\mathcal O).$
For general data \(q_T\in\mathcal Y_K^-\), the estimate follows by density and by the 
admissibility of the localized observation map on \(\mathcal Y_K^-\).
\end{proof}
Consequently, in the proper localized case, \(G_{\mathcal O}>0\).
Since \(0\notin\sigma(A)\), one has \(T_{\rm geom}>0\). Hence, for every
\(T_1>T_{\rm geom}\), the whole-line low-frequency constant appearing in
Proposition 4.6 is uniformly bounded for \(T\in(T_{\rm geom},T_1]\).
Thus \(C_{LF}(T,\rho)\) can be replaced by a constant
\(C_{LF}(T_1,\rho)\) independent of \(T\).
\subsection{Recombination}
\label{subsec:recombination-localized}

We now combine the frequency-localized observability estimate with the recovery estimate
away from zero frequency.

\begin{proof}[Proof of Theorem~\ref{thm:local}]
The preceding estimates are valid for any fixed threshold \(\rho>0\). The constants in
the frequency-localized observability estimate may depend on \(\rho\), while the
nonzero-frequency recovery estimate is uniform on the region \(\{|\xi|\ge \rho\}\).

We choose \(\rho=1\), which is the natural saturation scale of the Kalman weights
$\rho_k(\xi)=\min\{|\xi|^{2k},1\}.$
Thus the region \(\{|\xi|\le 1\}\) carries the Kalman low-frequency weights, whereas
\(\{|\xi|>1\}\) corresponds to the saturated \(L^2\)-regime.
Fix $T>T_{\mathrm{geom}}.$ We decompose the terminal datum as
\[
q_T=q_T^{LF}+q_T^{HF},
\quad
q_T^{LF}:=P_{|\xi|\le1}q_T,
\quad
q_T^{HF}:=P_{|\xi|>1}q_T.
\]
Since the adjoint equation has constant coefficients, this decomposition is preserved
by the flow. Hence $q=q^{LF}+q^{HF}.$
We now use Corollary~\ref{recovvv} with \(\rho=1\), which extends
Proposition~\ref{prop:direct-recovery} from \(L^2(\mathbb R)^N\) terminal data
to terminal data in \(\mathcal Y_K^-\).
For simplicity, we denote the localized observation functional by
\[
O_T(q)
:=
\int_0^T\int_{\mathcal O}|B^\top q(t,x)|^2\,dx\,dt.
\]

By the definition of the Kalman-adapted norm and since the weights are saturated on
\(\{|\xi|>1\}\), we have
\[
    \|q_T\|_{\mathcal Y_K^-}^2
    =
    \|q_T^{LF}\|_{\mathcal Y_K^-}^2
    +
    \|q_T^{HF}\|_{L^2(\mathbb R)^N}^2.
\]
The low- and high-frequency parts are orthogonal in the global Fourier norm.
After restriction to the observation set \(\mathcal O\), this orthogonality is
no longer available.

The frequency-localized observability estimate gives
\[
    \|q_T^{LF}\|_{\mathcal Y_K^-}^2
    \le
    \Gamma_{LF}(T_1,1)
    \int_0^T\int_{\mathcal O}|B^\top q^{LF}(t,x)|^2\,dx\,dt.
\]
Since \(B^\top q^{LF}=B^\top q-B^\top q^{HF}\), we obtain
\[
    \|q_T^{LF}\|_{\mathcal Y_K^-}^2
    \le
    2\Gamma_{LF}(T_1,1)O_T(q)
    +
    2\Gamma_{LF}(T_1,1)
    \|B^\top q^{HF}\|_{L^2((0,T)\times\mathcal O)^m}^2.
\]
By the admissibility of the localized observation map on \(\mathcal Y_K^-\),
\[
    \|B^\top q^{HF}\|_{L^2((0,T)\times\mathcal O)^m}^2
    \le
    C_{\rm adm}(T_1)^2
    \|q_T^{HF}\|_{\mathcal Y_K^-}^2.
\]
On the high-frequency region \(\{|\xi|>1\}\), the \(\mathcal Y_K^-\)-norm
coincides with the \(L^2\)-norm. Therefore, by Corollary~\ref{recovvv} with
\(\rho=1\),
\[
\|q_T^{HF}\|_{\mathcal Y_K^-}^2
=
\|q_T^{HF}\|_{L^2(\mathbb R)^N}^2
\le
\frac{C_p(T_1,1)}
{(T-T_{\mathrm{geom}})^{2p-1}}O_T(q).
\]
Combining the previous inequalities, we get
\[
    \|q_T^{LF}\|_{\mathcal Y_K^-}^2
    \le
    2\Gamma_{LF}(T_1,1) O_T(q)
    +
    \frac{
    2\Gamma_{LF}(T_1,1)C_{\rm adm}(T_1)^2C_p(T_1,1)
    }
    {(T-T_{\mathrm{geom}})^{2p-1}}
     O_T(q).
\]
Adding the high-frequency estimate yields
\[
\|q_T\|_{\mathcal Y_K^-}^2
\le
\left[
2\Gamma_{LF}(T_1,1)
+
\frac{
C_p(T_1,1)+2\Gamma_{LF}(T_1,1)C_{\rm adm}(T_1)^2C_p(T_1,1)
}
{(T-T_{\mathrm{geom}})^{2p-1}}
\right] O_T(q).
\]
Equivalently, after enlarging the constant, we obtain
\[
\|q_T\|_{\mathcal Y_K^-}^2
\le
\frac{C(\mathcal O,A,B,K,T_1)}
{(T-T_{\mathrm{geom}})^{2p-1}}
\int_0^T\int_{\mathcal O}|B^\top q(t,x)|^2\,dx\,dt.
\]
Since \(T\le T_1\), all bounded constants depending on \(T_1\) can be absorbed
into the factor \((T-T_{\mathrm{geom}})^{-(2p-1)}\), after enlarging the final
constant.
This proves the localized observability estimate. The blow-up rate displayed above
comes from the nonzero-frequency recovery step; no optimality of the exponent
\(2p-1\) is claimed.
\end{proof}
\begin{remark}
\label{rem:loss-and-Jordan}
The differential separation argument used in
Proposition~\ref{prop:direct-recovery} produces an isotropic derivative loss of
order \(p-1\). If this loss is not inverted away from \(\xi=0\), the same
argument yields, for every \(\rho>0\),
\[
\int_{|\xi|\leq \rho}
|\xi|^{2(p-1)}|\widehat q_T(\xi)|^2\,d\xi
\lesssim_{T,\rho}
\frac{1}{(T-T_{\mathrm{geom}})^{2p-1}}
\int_0^T\int_{\mathcal O}|B^\top q(t,x)|^2\,dx\,dt .
\]
This estimate assigns the worst low-frequency loss
\(|\xi|^{2(p-1)}\) to all components. Since \(A^\top\) is diagonalizable with
\(p\) distinct eigenvalues, its minimal polynomial has degree \(p\), and hence
the Kalman index satisfies \(K\leq p-1\). The Kalman-adapted estimate is sharper:
it assigns the loss \(|\xi|^{2k}\) only to the corresponding layer
\(\mathcal V_k\).
The localized nonzero-frequency recovery also relies on the diagonalizability
of \(A^\top\): the proof separates pure characteristic families and then uses
the Hautus condition
$\ker B^\top\cap E_\mu=\{0\}$
to recover each eigenspace component. In the presence of Jordan blocks, the
components inside a generalized eigenspace remain coupled along the nilpotent
chain. Treating this case would require a triangular localized recovery
argument, which is outside the scope of the present paper.
\end{remark}
\section{Sharpness of the Kalman weights and optimal anisotropic spaces}\label{opti}
Sharpness issues are classical in observability and control theory: structural conditions are often both sufficient and necessary; see, e.g.,
\cite{BLR1992}. Related optimality phenomena for algebraic coupling appear in indirectly actuated systems; see \cite{AlabauLeautaud2012}.
We now prove the sharpness of the functional framework introduced above. First, the Kalman condition is necessary for observability. Second, within the natural class of Hilbert norms defined by block-diagonal Fourier multipliers adapted to the Kalman decomposition
$\mathbb R^N=\bigoplus_{k=0}^K\mathcal V_k,$
the low-frequency weights of $\mathcal Y_K^-$ are optimal. More precisely, on the $k$-th Kalman layer, the weight $|\xi|^{2k}$ cannot be replaced by any stronger low-frequency weight without destroying observability. Thus, combined with the observability results of Sections~3 and~4, $\mathcal Y_K^-$ is optimal in this layerwise Fourier-multiplier class.
\begin{theorem}
\label{thm:sharpness}
\begin{enumerate}
\item \textbf{Failure of observability without the Kalman condition.}
Assume that
\[
\operatorname{rank}(B,AB,\dots,A^{N-1}B)=r<N.
\]
Then there exists a nontrivial subspace $\mathcal U\subset\R^N$ such that,
for any $q_T\in\mathcal U$, the corresponding solution satisfies
\[
B^\top e^{i\xi(T-t)A^\top} q_T = 0 \quad \text{for all } t\in[0,T].
\]
This subspace coincides with the orthogonal complement of the Kalman space $\mathcal K_{N-1}$.
Consequently, no observability inequality can hold in any norm that is nondegenerate on $\mathcal U$.
In particular, exact controllability on the whole state space fails.

\item \textbf{Optimality of the weights.}
Fix \(k\in\{0,\dots,K\}\) such that \(\mathcal V_k\neq\{0\}\), and let
\(w:\mathbb R\to[0,\infty)\) be a measurable weight satisfying
\[
\frac{w(\xi)}{|\xi|^{2k}}\xrightarrow[\xi\to0]{}+\infty.
\]
Then no constant \(C>0\) can exist such that, for every admissible terminal
datum \(q_T\) and corresponding adjoint solution \(q\),
\[
\int_{\mathbb R} w(\xi)\,
\big|\Pi_k\widehat q_T(\xi)\big|^2\,d\xi
\le
C\int_0^T\!\!\int_{\mathcal O} |B^\top q|^2\,dx\,dt.
\]
Equivalently, on the \(k\)-th nontrivial Kalman layer, the low-frequency
weight \(|\xi|^{2k}\) cannot be replaced by any weight which is uniformly
stronger near \(\xi=0\).
\end{enumerate}

Consequently, among Hilbert spaces whose norms are defined by block-diagonal Fourier multipliers adapted to the Kalman decomposition, the space $\mathcal Y_K^-$ is sharp: on each nontrivial Kalman layer, its low-frequency weight cannot be replaced by a uniformly stronger one while preserving observability.
\end{theorem}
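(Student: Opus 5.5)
For part (1), I will work directly with the orthogonal complement \(\mathcal U:=\mathcal K_{N-1}^\perp\), which is nontrivial because \(\operatorname{rank}\mathcal K_{N-1}=r<N\). By \eqref{eq:Sk-orth-single}, \(v\in\mathcal U\) if and only if \(B^\top(A^\top)^\ell v=0\) for \(0\le\ell\le N-1\). By Cayley--Hamilton, every higher power \((A^\top)^\ell\) with \(\ell\ge N\) is a linear combination of \(I,\dots,(A^\top)^{N-1}\), so \(B^\top(A^\top)^\ell v=0\) for all \(\ell\ge0\). Summing the exponential series then gives \(B^\top e^{i\xi(T-t)A^\top}v=0\) for every \(\xi\) and \(t\). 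For the identification \(\mathcal U=\mathcal K_{N-1}^\perp\) I also need the converse inclusion: if \(B^\top e^{isA^\top}v\equiv0\), differentiating at \(s=0\) gives \(B^\top(A^\top)^\ell v=0\) for all \(\ell\), hence \(v\in\mathcal K_{N-1}^\perp\). The consequence for observability follows by taking terminal data \(\widehat q_T(\xi)=\varphi(\xi)v\) with \(v\in\mathcal U\setminus\{0\}\) and \(\varphi\in C_c^\infty\). For this datum the observation \(B^\top q\) vanishes identically, on \(\mathcal O\) and in fact on all of \(\mathbb R\), while any norm that is nondegenerate on \(\mathcal U\) is positive. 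Failure of exact controllability then follows from the converse direction of HUM duality, or more directly from the fact that \(\operatorname{Ran}\) of the control-to-state map is orthogonal to \(\mathcal U\)-valued data.

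For part (2), I will argue by contradiction using concentrating low-frequency test data on the layer \(\mathcal V_k\). Fix a unit vector \(v\in\mathcal V_k\) and \(0\neq\psi\in C_c^\infty(1,2)\), and for \(\varepsilon\to0\) set \(\widehat q_T(\xi)=\psi(\xi/\varepsilon)v\). Since \(\Pi_k v=v\), the left-hand side equals \(\int w(\xi)|\psi(\xi/\varepsilon)|^2d\xi\). Given \(M>0\), the hypothesis \(w(\xi)\ge M|\xi|^{2k}\) holds for \(|\xi|\le2\varepsilon\) once \(\varepsilon\) is small. So the left-hand side is at least \(M\varepsilon^{2k+1}\int|\eta|^{2k}|\psi(\eta)|^2d\eta\).

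For the right-hand side, I drop the localization \(\mathcal O\subset\mathbb R\), which only enlarges the integral, and use Plancherel together with the Gramian: the right-hand side is at most \(C\int\psi(\xi/\varepsilon)^2\,v^*\mathcal G_T(\xi)v\,d\xi\). Once \(2\varepsilon T\le\eta_k\), the upper bound of Corollary~\ref{cor:layerwise-asymptotics} gives \(v^*\mathcal G_T(\xi)v\le C_kT^{2k+1}|\xi|^{2k}\). The right-hand side is therefore at most \(CC_kT^{2k+1}\varepsilon^{2k+1}\int|\eta|^{2k}|\psi|^2\). Dividing by \(\varepsilon^{2k+1}\) yields \(M\le C\,C_kT^{2k+1}\) for every \(M\), which is a contradiction.

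Two admissibility points need checking. First, \(q_T\) is Schwartz, so it belongs to \(\mathcal Y_K^-\). Second, the test datum is complex-valued, so I need the upper Gramian bound on \(\mathcal V_k^{\mathbb C}\); this is how the corollary is stated. The final ``consequently'' sentence then combines part (2) with Theorems~\ref{thm:global} and~\ref{thm:local}, which show that the weight \(|\xi|^{2k}\) is actually attained.

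The only delicate point I anticipate is the uniformity of the remainder in the Taylor expansion that underlies the upper Gramian bound, but that is already provided by Corollary~\ref{cor:layerwise-asymptotics}. I therefore expect the main care to go into the bookkeeping of the scaling in \(\varepsilon\).
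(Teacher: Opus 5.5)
Your proposal is correct and follows the paper's proof essentially step by step. For part (1) you take $\mathcal U=\mathcal K_{N-1}^\perp$ and use Cayley--Hamilton plus the exponential series. For part (2) you use a datum $\psi(\xi/\varepsilon)v$ concentrated on a single Kalman layer near $\xi=0$, bounded above by the layerwise Gramian asymptotics of Corollary~\ref{cor:layerwise-asymptotics} and below by the growth of $w$. The differences are only cosmetic: the paper uses an even real profile with an $\varepsilon^{-1/2}$ normalization so that $q_T$ is real-valued, while your complex-valued Schwartz datum is equally admissible in $\mathcal Y_K^-$, and you also check the reverse inclusion identifying $\mathcal U$ as exactly the unobservable set.
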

The proof below shows sharpness by concentrating terminal data on a single Kalman layer and near the zero frequency.
\begin{proof}
\textbf{Failure of observability without the Kalman condition.}\\
Define
\[
\mathcal K_{N-1}:=\operatorname{Ran}(B,AB,\dots,A^{N-1}B),\qquad
\mathcal U:=\mathcal K_{N-1}^\perp\neq\{0\},
\]
and let $\Pi_{\mathcal U}$ denote the orthogonal projector onto $\mathcal U$.
By definition,
\[
v\in\mathcal U
\quad\Longleftrightarrow\quad
B^\top (A^\top)^\ell v=0,\qquad \ell=0,\dots,N-1.
\]
By the Cayley-Hamilton theorem, any power $(A^\top)^\ell$ with $\ell\ge N$
can be written as a linear combination of $(A^\top)^j$, $j=0,\dots,N-1$.
Therefore, the above property extends to all $\ell\ge0$.

Let $L\in\mathcal U$ and $\xi\in\R$. Solving the Fourier adjoint equation
$\partial_t\widehat q+i\xi A^\top\widehat q=0$ with terminal datum
$\widehat q(T,\xi)=L$ yields
\[
\widehat q(t,\xi)
= e^{i\xi(T-t)A^\top}L
= \sum_{\ell\ge0}\frac{(T-t)^\ell}{\ell!}(i\xi)^\ell (A^\top)^\ell L.
\]
Applying $B^\top$ and using $B^\top (A^\top)^\ell L=0$ gives
\[
B^\top e^{i\xi(T-t)A^\top}L \equiv 0,
\qquad \forall\,t\in[0,T],\ \forall\,\xi\in\R.
\]
Hence, if $q_T$ takes values in $\mathcal U$, then $\widehat q_T(\xi)\in\mathcal U$
for a.e.\ $\xi$, and therefore
\[
B^\top \widehat q(t,\xi)=0
\qquad \text{for a.e.\ }\xi\in\R,\ \forall\,t\in[0,T].
\]
By inverse Fourier transform,
$B^\top q\equiv0
\quad\text{on }(0,T)\times\R.$
At the PDE level, decompose $q_T=q_T^{\mathcal U}+q_T^{S}$ with values in
$\mathcal U$ and $\mathcal K_{N-1}$. By linearity,
\[
B^\top q^{\mathcal U}\equiv0
\quad\text{on }(0,T)\times\R,
\]
so that the observation detects only the component in $\mathcal K_{N-1}$.
If a norm $X$ is nondegenerate on $\mathcal U$, choosing
$q_T^{\mathcal U}\not\equiv0$ and $q_T^{S}\equiv0$ violates any observability inequality.
\\
\textbf{Optimality of the weights.}
Assume now that the Kalman rank condition holds. Fix \(k\in\{0,\dots,K\}\)
such that \(\mathcal V_k\neq\{0\}\), and assume that the weight
\(w:\R\to[0,\infty)\) satisfies
\[
\frac{w(\xi)}{|\xi|^{2k}}\xrightarrow[\xi\to0]{}+\infty .
\]
Let \(\mathcal V_k\) be a nontrivial Kalman block and let \(\Pi_k\) be the
orthogonal projector onto \(\mathcal V_k\). 
Using the result \eqref{eq:layerwise-asymptotics} of Corollary~\ref{cor:layerwise-asymptotics}, there exist
constants \(c_1,c_2>0\) and \(\xi_0>0\) such that, for all
\(v\in\mathcal V_k\) and all \(|\xi|\le \xi_0\),
\[
c_1|\xi|^{2k}|v|^2
\le
v^*\mathcal G_T(\xi)v
\le
c_2|\xi|^{2k}|v|^2 .
\]

Fix \(v\in\mathcal V_k\) with \(|v|=1\), and choose
\(\phi\in C_c^\infty(\R;\R)\) nonzero and even. For \(\varepsilon>0\), define
the terminal datum by
\[
\widehat q_T^\varepsilon(\xi)
:=
\varepsilon^{-1/2}\phi(\xi/\varepsilon)\,v .
\]
Since \(\phi\) is real-valued and even, and since \(v\in\R^N\), one has
$\widehat q_T^\varepsilon(-\xi)
=
\overline{\widehat q_T^\varepsilon(\xi)}.$
Thus \(q_T^\varepsilon\in L^2(\R;\R^N)\). Moreover, since
\(v\in\mathcal V_k\), one has
$\Pi_k q_T^\varepsilon=q_T^\varepsilon .$
Finally,
$\|q_T^\varepsilon\|_{L^2(\R;\R^N)}
=
\|\phi\|_{L^2(\R)} .$
For \(\varepsilon>0\) small enough, we also have
$\operatorname{supp}\widehat q_T^\varepsilon
\subset
\{|\xi|\le\xi_0\}.$
The Fourier mass of $q_T^\varepsilon$ concentrates near $\xi=0$ as $\varepsilon\to0$.
Let $q^\varepsilon$ be the corresponding solution of the adjoint system. For
$\varepsilon>0$ small enough so that
$\supp \widehat q_T^\varepsilon\subset\{|\xi|\le \xi_0\}$, the preceding estimate yields
\[
\int_0^T\!\!\int_\R |B^\top q^\varepsilon|^2\,dx\,dt
=
\int_\R (\widehat q_T^\varepsilon)^*\mathcal G_T(\xi)\widehat q_T^\varepsilon\,d\xi
\le C_k\,\varepsilon^{2k}
\]
for some constant $C_k>0$ independent of $\varepsilon$.
Assume by contradiction that there exists $C>0$ such that, for all
$q_T\in L^2(\R;\R^N)$ and corresponding solutions $q$,
\[
\int_\R w(\xi)\,\big|\widehat{\Pi_k q_T}(\xi)\big|^2\,d\xi
\le
C\int_0^T\!\!\int_\mathcal O |B^\top q|^2\,dx\,dt.
\]
Since $\mathcal O\subset\R$, one has
\[
\int_0^T\!\!\int_\mathcal O |B^\top q|^2\,dx\,dt
\le
\int_0^T\!\!\int_\R |B^\top q|^2\,dx\,dt.
\]
Hence the same estimate holds with $\R$ in place of $\mathcal O$. Applying it to
$q_T^\varepsilon$ and using $\Pi_k q_T^\varepsilon=q_T^\varepsilon$, we obtain
\[
\int_\R w(\xi)|\widehat q_T^\varepsilon(\xi)|^2\,d\xi
\le C\,C_k\,\varepsilon^{2k}.
\]
Changing variables $\xi=\varepsilon\eta$ yields
\[
\int_\R w(\varepsilon\eta)|\phi(\eta)|^2\,d\eta
\le C\,C_k\,\varepsilon^{2k}.
\]
By the assumption on $w$, for any $M>0$ there exists $\delta_M>0$ such that $w(\xi)\ge M|\xi|^{2k}$ for all $|\xi|\le\delta_M$. Choosing $\varepsilon>0$ small enough so that $\varepsilon|\eta|\le\delta_M$ on $\supp\phi$, we obtain
\[
\int_\R w(\varepsilon\eta)|\phi(\eta)|^2\,d\eta
\ge M\,\varepsilon^{2k}\int_\R |\eta|^{2k}|\phi(\eta)|^2\,d\eta.
\]
Since $\phi\not\equiv0$, the right-hand side is bounded below by $M c_0\varepsilon^{2k}$ for some $c_0>0$, which yields a contradiction for $M$ large enough. This proves that no uniformly stronger low-frequency weight than $|\xi|^{2k}$
can be admissible on the $k$-th Kalman layer.
\end{proof}

\section{Global observability beyond diagonalizability: the Jordan case}
\label{jordann}

The results proved in Sections~2--5 rely on the assumption that \(A\) is
diagonalizable over \(\mathbb R\). In this section we restrict ourselves to the
whole-line case \(\mathcal O=\mathbb R\) and allow real-spectrum matrices with
nontrivial Jordan blocks.

After Fourier transform in space, each frequency \(\xi\) gives a
finite-dimensional observation problem. Near \(\xi=0\), the degeneracy of the
observation Gramian is still governed by the Kalman filtration of the pair
\((A,B)\). At high frequencies, the new feature is the nilpotent part of the
Jordan decomposition: nontrivial Jordan blocks generate polynomial factors in
\(|\xi|\), as is standard in weakly hyperbolic systems with multiple
characteristics; see, for instance,
\cite{DAnconaSpagnolo1998,GarettoJahRuzhansky2018}. The adapted space therefore
combines Kalman low-frequency weights with Jordan high-frequency weights.

Since \(A\) and \(A^\top\) have the same Jordan block structure, we define the
Jordan indices using a Jordan decomposition of \(A\). Assume
\(\sigma(A)\subset\mathbb R\) and write
\[
A=PJP^{-1},
\qquad
J=\operatorname{diag}\big(J_{m_1}(\lambda_1),\dots,J_{m_q}(\lambda_q)\big),
\qquad
J_m(\lambda)=\lambda I_m+N_m,
\qquad
N_m^m=0.
\]
Set
$m_{\max}:=\max\{m_1,\dots,m_q\},
\qquad
\nu:=m_{\max}-1.$
We now introduce a Jordan-adapted Kalman scale. The construction separates
three frequency regimes: the Kalman-layer norm is used at low frequencies, the
standard \(L^2\) norm at intermediate frequencies, and a Jordan-chain norm with
polynomial weights at high frequencies. Since these regimes are glued together
by scalar Fourier cutoffs, no compatibility between the Kalman decomposition
and the Jordan basis is required.
\begin{proposition}
\label{prop:WP-jordan}
Let \(A\in\mathbb R^{N\times N}\) satisfy \(\sigma(A)\subset\mathbb R\), and let
\(\nu=m_{\max}-1\), where \(m_{\max}\) denotes the maximal size of the Jordan
blocks of \(A\). Then, for every \(s\in\mathbb R\) and every \(T>0\), there
exists \(C_T>0\) such that, for every \(t\in[0,T]\) and every
\(y_0\in H^{s+\nu}(\mathbb R)^N\),
\[
\bigl\|e^{-tA\partial_x}y_0\bigr\|_{H^s}
\leq
C_T\|y_0\|_{H^{s+\nu}}.
\]
\end{proposition}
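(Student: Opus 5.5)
The plan is to pass to the Fourier side, reduce the estimate to a pointwise bound on the matrix symbol, and read that bound off from the Jordan decomposition. With the paper's convention \(\widehat{\partial_x f}=i\xi\widehat f\), one has
\[
\widehat{e^{-tA\partial_x}y_0}(\xi)=e^{-it\xi A}\widehat y_0(\xi).
\]
By Plancherel, the claimed estimate then follows from the symbol bound
\[
\|e^{-it\xi A}\|_{\C^N\to\C^N}\le C_T(1+|\xi|)^{\nu},\qquad t\in[0,T],\ \xi\in\R,
\]
together with \((1+|\xi|^2)^{s/2}(1+|\xi|)^{\nu}\lesssim(1+|\xi|^2)^{(s+\nu)/2}\). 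The estimate should be proved first for Schwartz data, where everything is classical, and then extended by density of \(\mathcal S\) in \(H^{s+\nu}\).

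For the symbol bound I would use \(A=PJP^{-1}\), which gives \(e^{-it\xi A}=Pe^{-it\xi J}P^{-1}\). The block structure of \(J\) then yields, blockwise,
\[
e^{-it\xi J_{m}(\lambda)}=e^{-it\xi\lambda}\sum_{j=0}^{m-1}\frac{(-it\xi)^j}{j!}N_m^j.
\]
This expansion is valid because \(\lambda I_m\) and \(N_m\) commute, and the nilpotent series terminates at \(j=m-1\le\nu\).

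The key step is the use of \(\sigma(A)\subset\R\). Since \(t\xi\lambda\) is real, the scalar factor satisfies \(|e^{-it\xi\lambda}|=1\). The remaining polynomial sum is bounded in operator norm by \(\sum_{j\le\nu}(T|\xi|)^j/j!\,\|N_m\|^j\le C(1+T)^\nu(1+|\xi|)^\nu\). Multiplying by \(\|P\|\,\|P^{-1}\|\) gives the symbol bound with \(C_T\) depending only on \(A\) and \(T\), uniformly in \(t\in[0,T]\).

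Finally, I would integrate \((1+|\xi|^2)^{s}|e^{-it\xi A}\widehat y_0|^2\) in \(\xi\) and use the weight comparison \((1+|\xi|)^{2\nu}\le 2^{\nu}(1+|\xi|^2)^{\nu}\) to conclude. There is no real obstacle in this argument. The only point requiring care is to make explicit that the reality of the spectrum is what removes any exponential growth in \(|\xi|\). If an eigenvalue were complex, the factor \(e^{t|\xi|\,|\operatorname{Im}\lambda|}\) would appear and the estimate would fail; this is why the hypothesis \(\sigma(A)\subset\R\) is needed. A real Jordan form is not needed: the complex Jordan decomposition suffices, because only norms are estimated.
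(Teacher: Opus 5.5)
Your proposal is correct and follows essentially the paper's own argument: reduction to the Fourier symbol, the blockwise Jordan expansion in which $\sigma(A)\subset\mathbb R$ makes the phase $e^{\pm it\xi\lambda}$ unimodular, the resulting $(1+|\xi|)^{\nu}$ bound on the symbol, and Plancherel. You also make explicit the conjugation constant $\|P\|\,\|P^{-1}\|$ and the weight comparison $(1+|\xi|)^{2\nu}\le 2^{\nu}(1+|\xi|^2)^{\nu}$, both of which the paper leaves implicit.
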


\begin{proof}
It is enough to argue block by block. Let
$J_\lambda=\lambda I+N_\lambda$
be a Jordan block of size \(m\), with \(N_\lambda^m=0\). For
\(\tau\in[0,T]\) and \(\xi\in\mathbb R\), one has
\[
e^{i\tau\xi J_\lambda}
=
e^{i\tau\xi\lambda}
\sum_{j=0}^{m-1}
\frac{(i\tau\xi)^j}{j!}N_\lambda^j .
\]
Hence there exists a constant \(C_T>0\), depending on \(T\) and on the block,
such that
\[
\bigl\|e^{i\tau\xi J_\lambda}z\bigr\|
\leq
C_T(1+|\xi|)^{m-1}\|z\|,
\qquad
\tau\in[0,T],\ \xi\in\mathbb R .
\]
Taking the maximum over all Jordan blocks gives
\[
\bigl\|e^{i\tau\xi J}z\bigr\|
\leq
C_T(1+|\xi|)^\nu\|z\|,
\qquad
\tau\in[0,T],\ \xi\in\mathbb R,
\]
where
$\nu=\max_\lambda(m_\lambda-1)=m_{\max}-1.$
By Plancherel,
\[
\begin{aligned}
\bigl\|e^{-tA\partial_x}y_0\bigr\|_{H^s}^2
&\leq
C_T
\int_{\mathbb R}
(1+|\xi|^2)^s(1+|\xi|)^{2\nu}
|\widehat y_0(\xi)|^2\,d\xi  \\
&\leq
C_T
\|y_0\|_{H^{s+\nu}}^2,
\end{aligned}
\]
for every \(t\in[0,T]\). This proves the claimed Sobolev loss.
\end{proof}
\begin{remark}
Proposition~\ref{prop:WP-jordan} is only a coarse Sobolev bound for the free
Jordan flow. It records the maximal polynomial growth generated by the longest
Jordan chain. The observability space used below is sharper: the high-frequency
part of \(\mathcal Y_{K,J}^-\) keeps the chain-wise Jordan weights instead of
replacing them by the uniform Sobolev loss \(\nu\).
\end{remark}

We work on the whole line, so that \(\mathcal O=\mathbb R\).
Let \(K\) be the Kalman index of \((A,B)\). In the Jordan case, the low
frequencies are still governed by the Kalman filtration, while the high
frequencies are governed by the Jordan chains. Thus the natural global
observability space combines the low-frequency Kalman weights with
high-frequency Jordan-chain weights.
Choose two thresholds \(0<\xi_0<\xi_1\), with \(\xi_0\) small enough for the low-frequency Kalman estimate and \(\xi_1\) large enough for the high-frequency Jordan estimate. We fix once and for all a
real Jordan decomposition
\[
A^\top=SJS^{-1},
\qquad
\widetilde B^\top:=B^\top S.
\]
For each \(\lambda\in\sigma(A)\), let \(\mathcal E_\lambda\) be the generalized
eigenspace of \(J\), let
$N_\lambda:=(J-\lambda I)|_{\mathcal E_\lambda},$
and let \(m_\lambda\) be the nilpotency index of \(N_\lambda\), namely
\(N_\lambda^{m_\lambda}=0\) and \(N_\lambda^{m_\lambda-1}\neq0\) whenever
\(m_\lambda>1\). We also set
$\widetilde B_\lambda^\top:=\widetilde B^\top|_{\mathcal E_\lambda}.$
For \(u\in\mathbb C^N\), writing \(u_\lambda\) for its component in
\(\mathcal E_\lambda\), define
\[
a_{\lambda,j}(u)
:=
\frac1{j!}\widetilde B_\lambda^\top N_\lambda^j u_\lambda,
\qquad
0\leq j\leq m_\lambda-1.
\]
For \(|\xi|\geq\xi_1\), the high-frequency Jordan-chain norm is
\[
\|u\|_{\mathcal J,\xi,J}^{2}
:=
\sum_{\lambda\in\sigma(A)}
\sum_{j=0}^{m_\lambda-1}
|\xi|^{2j}
\|a_{\lambda,j}(u)\|_{\mathbb C^m}^{2}.
\]
In the original variables we set
$\|z\|_{\mathcal J,\xi}^{2}
:=
\|S^{-1}z\|_{\mathcal J,\xi,J}^{2}.$
We define the Jordan-adapted observability space
\(\mathcal Y_{K,J}^{-}\) by
\[
\|q_T\|_{\mathcal Y_{K,J}^{-}}^{2}
:=
\int_{|\xi|\le\xi_0}
\sum_{k=0}^{K}
|\xi|^{2k}
|\Pi_k\widehat q_T(\xi)|^{2}\,d\xi
+
\int_{\xi_0<|\xi|<\xi_1}
|\widehat q_T(\xi)|^{2}\,d\xi
+
\int_{|\xi|\ge\xi_1}
\|\widehat q_T(\xi)\|_{\mathcal J,\xi}^{2}\,d\xi .
\]
\begin{remark}
\label{rem:jordan-cutoffs}
The Hilbert space \(\mathcal Y_{K,J}^{-}\) is independent, up to equivalence
of norms, of the admissible choice of the thresholds \(0<\xi_0<\xi_1\).
More precisely, if two pairs of thresholds are chosen so that the low-frequency
Kalman estimate and the high-frequency Jordan estimate hold in the corresponding
regimes, then the resulting norms are equivalent. Indeed, the two definitions
can differ only on bounded nonzero frequency annuli and on transition regions.
On such sets all scalar weights are bounded above and below by positive
constants, and all finite-dimensional norms are equivalent.
\end{remark}
We now prove the converse estimate. Together with the admissibility result above, it shows that the observation norm is equivalent to the Jordan-adapted Kalman norm on the whole line. The proof combines the usual Kalman low-frequency mechanism with the high-frequency Jordan-chain coercivity.
\begin{proposition}
\label{prop:obs-jordan}
Assume that \(\sigma(A)\subset\mathbb R\) and that the pair \((A,B)\)
satisfies the Kalman rank condition. Let \(K\) be the Kalman index. Then, for
every \(T>0\), there exists \(C_T>0\) such that every terminal datum
\(q_T\in \mathcal Y_{K,J}^{-}\) and the corresponding solution of
\eqref{adjoint-system} satisfy
\[
\|q_T\|_{\mathcal Y_{K,J}^{-}}^2
\le
C_T
\int_0^T\int_{\mathbb R}
|B^\top q(t,x)|^2\,dx\,dt .
\]
\end{proposition}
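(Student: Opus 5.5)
The plan is to argue frequency by frequency, exactly as in Section~\ref{sec:whole-line}, and to establish the pointwise Gramian bound
\[
\widehat q_T(\xi)^*\mathcal G_T(\xi)\widehat q_T(\xi)\ \ge\ c_T\,\mathcal N_\xi\bigl(\widehat q_T(\xi)\bigr)^2,\qquad \xi\in\mathbb R,
\]
where \(\mathcal N_\xi\) is the integrand of the \(\mathcal Y_{K,J}^-\)-norm in the regime containing \(\xi\). Since \(\int_0^T\!\int_{\mathbb R}|B^\top q|^2\,dx\,dt=(2\pi)^{-1}\int_{\mathbb R}\widehat q_T^*\mathcal G_T\widehat q_T\,d\xi\) by Plancherel, integrating in \(\xi\) gives the claim for Schwartz data. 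The general case \(q_T\in\mathcal Y_{K,J}^-\) then follows by density, as in Corollary~\ref{recovvv}. We fix \(T>0\) and allow the thresholds to depend on \(T\); this is permitted by Remark~\ref{rem:jordan-cutoffs}.

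\emph{Low frequencies.} The proof of Lemma~\ref{lem:lowfreq} uses only the Taylor expansion of \(e^{is\xi A^\top}\), the triangular Kalman relations, and Lemma~\ref{lem:kalman}. None of these uses diagonalizability. Hence, for \(|\xi|\le\xi_0\le\eta_0/T\), we obtain \(z^*\mathcal G_T(\xi)z\ge c\min\{T,T^{2K+1}\}\sum_k|\xi|^{2k}|\Pi_kz|^2\), which is exactly the first part of the norm.

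\emph{Intermediate frequencies.} The argument of Step~A in Section~\ref{subsec:medium-high} also does not use diagonalizability. If \(z^*\mathcal G_T(\xi)z=0\) with \(\xi\neq0\), differentiating at \(s=0\) gives \(B^\top(A^\top)^\ell z=0\) for all \(\ell\), hence \(z=0\) by the Kalman condition. Continuity of \(\xi\mapsto\lambda_{\min}(\mathcal G_T(\xi))\) on the compact annulus \(\xi_0\le|\xi|\le\xi_1\) then gives \(z^*\mathcal G_T(\xi)z\ge c_{\rm med}|z|^2\) there.

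\emph{High frequencies (the main step).} Set \(u=S^{-1}z\). Using \(e^{is\xi J}=\bigoplus_\lambda e^{is\xi\lambda}e^{is\xi N_\lambda}\), we get
\[
B^\top e^{is\xi A^\top}z=\widetilde B^\top e^{is\xi J}u=\sum_{\lambda}e^{is\xi\lambda}\sum_{j=0}^{m_\lambda-1}(i s)^j\,\xi^j a_{\lambda,j}(u).
\]
Put \(b_{\lambda,j}:=i^j\xi^ja_{\lambda,j}(u)\in\mathbb C^m\). Then \(z^*\mathcal G_T(\xi)z=\int_0^T\|\sum_{\lambda,j}\phi_{\lambda,j}(s)b_{\lambda,j}\|^2ds\) with \(\phi_{\lambda,j}(s)=s^je^{is\xi\lambda}\). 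By the tensor identity of Step~2, this equals \(\mathbf b^*(M(\xi)\otimes I_m)\mathbf b\), where \(M(\xi)\) is the Gram matrix of the \(\phi_{\lambda,j}\) in \(L^2(0,T)\).

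The diagonal blocks \(\lambda=\mu\) are the \(\xi\)-independent Gram matrices of the monomials \(1,\dots,s^{m_\lambda-1}\), which are positive definite. For an off-diagonal entry, \(\int_0^Ts^{j+j'}e^{is\xi(\lambda-\mu)}ds\), one integration by parts bounds it by \(C_T/(|\xi||\lambda-\mu|)\). Since \(\sigma(A)\) is finite and its spectral gaps are bounded below, \(M(\xi)\to\operatorname{diag}_\lambda M_\lambda\) in norm as \(|\xi|\to\infty\). Choosing \(\xi_1\) large therefore gives \(\lambda_{\min}(M(\xi))\ge\frac12\min_\lambda\lambda_{\min}(M_\lambda)\). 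This yields \(z^*\mathcal G_T(\xi)z\ge c\sum_{\lambda,j}|\xi|^{2j}\|a_{\lambda,j}(u)\|^2=c\|z\|_{\mathcal J,\xi}^2\).

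I expect this step to be the main obstacle. The difficulty is to keep the cross-term smallness uniform with polynomial weights. It is handled by absorbing the powers \(\xi^j\) into the coefficients \(b_{\lambda,j}\), so that only the \(\xi\)-dependence of the oscillatory phases remains in the Gram matrix. Note that no use of the Hautus condition is needed here, since the target norm is expressed directly through \(a_{\lambda,j}\).

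Combining the three regimes with \(c_T=\min\) of the three constants and integrating completes the proof.
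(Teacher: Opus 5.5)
Your proposal is correct and follows the paper's proof essentially step by step: the Taylor--Kalman low-frequency bound (valid without diagonalizability), compactness with positive definiteness of \(\mathcal G_T(\xi)\) on the annulus, and at high frequency the Jordan expansion, with \(\xi\)-independent monomial Gram blocks on the diagonal and \(O(1/|\xi|)\) off-diagonal entries obtained by integration by parts. The only difference is presentational. You absorb \(\xi^j\) into the coefficients and treat the full Gram matrix of \(s^j e^{is\xi\lambda}\) as a small perturbation of its block diagonal. The paper instead bounds the cross terms directly and uses \(2ab\le a^2+b^2\), which is the same estimate.
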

\begin{lemma}\label{lem:partB-JK}
Assume that $(A,B)$ satisfies the Kalman condition with Kalman index $K$.
Fix $\lambda\in\sigma(A)$ and let
$\mathcal E_\lambda:=\ker((A^\top-\lambda I)^{m_\lambda})$.
Then
\begin{equation}\label{eq:partB-JK}
\Bigg(\bigcap_{j=0}^{m_\lambda-1}\ker\big(B^\top (A^\top-\lambda I)^j\big)\Bigg)
\cap \mathcal E_\lambda=\{0\}.
\end{equation}
Equivalently, there exists $\beta_\lambda>0$ such that for all $z\in\mathcal E_\lambda$,
\begin{equation}\label{eq:partB-JK-beta}
\sum_{j=0}^{m_\lambda-1}\big\|B^\top (A^\top-\lambda I)^j z\big\|^2
\ge \beta_\lambda \|z\|^2.
\end{equation}
\end{lemma}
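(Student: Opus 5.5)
The plan is to show the intersection in \eqref{eq:partB-JK} is invariant under $A^\top$. Any nonzero invariant subspace contains an eigenvector, so a nonzero intersection would produce an eigenvector of $A^\top$ in $\ker B^\top$. That contradicts the Hautus form of the Kalman condition. The quantitative bound \eqref{eq:partB-JK-beta} then follows from compactness of the unit sphere in the finite-dimensional space $\mathcal E_\lambda$.

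Set $M:=A^\top-\lambda I$ and
\[
W:=\Bigl(\bigcap_{j=0}^{m_\lambda-1}\ker\bigl(B^\top M^j\bigr)\Bigr)\cap\mathcal E_\lambda .
\]
The key observation is that on $\mathcal E_\lambda$ one has $M^{m_\lambda}=0$. Hence for $z\in W$ the identity $B^\top M^j z=0$ holds for every $j\ge0$: for $j<m_\lambda$ by definition of $W$, and for $j\ge m_\lambda$ because $M^jz=0$. Since $\mathcal E_\lambda$ is $M$-invariant, $Mz$ again lies in $\mathcal E_\lambda$ and satisfies $B^\top M^j(Mz)=B^\top M^{j+1}z=0$ for all $j$. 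Thus $MW\subset W$, and so $A^\top W=(M+\lambda I)W\subset W$.

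Alternatively, I can avoid invariance altogether. Expanding $(A^\top)^\ell=(M+\lambda I)^\ell$ by the binomial formula (the two summands commute) gives $B^\top(A^\top)^\ell z=\sum_j\binom{\ell}{j}\lambda^{\ell-j}B^\top M^jz=0$ for all $\ell\ge0$. Then $z\in\mathcal K_K^\perp=\{0\}$ by \eqref{eq:Sk-orth-single} and the Kalman condition $\mathcal K_K=\mathbb R^N$. This direct route is cleaner, and I would present it as the main argument. It also works after complexification, because the relations are linear with real coefficients.

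The equivalence with \eqref{eq:partB-JK-beta} is the standard compactness step. The map
\[
z\mapsto\sum_{j=0}^{m_\lambda-1}\|B^\top M^jz\|^2
\]
is a continuous nonnegative quadratic form on $\mathcal E_\lambda$ that vanishes only at $z=0$, by \eqref{eq:partB-JK}. Its minimum $\beta_\lambda$ on the unit sphere is therefore positive, and homogeneity gives the bound for all $z$. Conversely, \eqref{eq:partB-JK-beta} trivially forces \eqref{eq:partB-JK}.

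The only point needing care is that $\lambda$ is real, so $M$ and $\mathcal E_\lambda$ are real objects. The argument therefore stays in $\mathbb R^N$ or extends verbatim to $\mathbb C^N$. I see no real obstacle here; the main thing to verify is that the truncation at $j=m_\lambda-1$ loses nothing, which is exactly the nilpotency $M^{m_\lambda}|_{\mathcal E_\lambda}=0$.
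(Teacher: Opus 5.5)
Your proposal is correct, and your main (binomial-expansion) argument is essentially the paper's proof. Nilpotency of $A^\top-\lambda I$ on $\mathcal E_\lambda$ makes each $B^\top(A^\top)^\ell z$ a linear combination of the vanishing terms $B^\top(A^\top-\lambda I)^j z$ with $j<m_\lambda$, so the Kalman condition forces $z=0$, and the quantitative bound then follows by compactness of the unit sphere of $\mathcal E_\lambda$ (your invariant-subspace/Hautus alternative is also valid but not needed).
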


\begin{proof}
Assume that $(A,B)$ satisfies the Kalman condition, i.e.
$\bigcap_{\ell=0}^{K}\ker(B^\top (A^\top)^\ell)=\{0\}$.
Let $z\in\mathcal E_\lambda$ satisfy $B^\top(A^\top-\lambda I)^j z=0$ for
$j=0,\dots,m_\lambda-1$.
Since $(A^\top-\lambda I)^{m_\lambda}z=0$, any polynomial in $A^\top$ applied to $z$
can be written as a linear combination of $(A^\top-\lambda I)^j z$, $0\le j\le m_\lambda-1$,
hence is annihilated by $B^\top$.
In particular, $B^\top(A^\top)^\ell z=0$ for all $\ell=0,\dots,K$, which implies $z=0$
by the Kalman condition.
The norm inequality follows by compactness on the unit sphere of $\mathcal E_\lambda$.
\end{proof}
\subsection{Proof of the global observability estimate: high-frequency Jordan analysis}
\begin{proof}[Proof of Proposition~\ref{prop:obs-jordan}]
It is enough to prove the estimate for smooth terminal data. The general case
then follows by density in \(\mathcal Y_{K,J}^{-}\) and by the admissibility
estimate of Lemma~\ref{lem:jordan-upper}, which ensures the continuity of the
observation map
\[
q_T\mapsto B^\top q
\]
from \(\mathcal Y_{K,J}^{-}\) into \(L^2((0,T)\times\mathbb R)^m\).

Assume throughout that $\sigma(A)\subset\mathbb R$ and that $(A,B)$ satisfies
the Kalman rank condition. We only discuss the new point with respect to the
diagonalizable case, namely the high-frequency contribution of non-trivial
Jordan blocks. The low-frequency Taylor--Kalman mechanism is unchanged, since
it only uses $B^\top(A^\top)^j\Pi_k=0,\hbox{ } j<k .$
Let $q$ be the adjoint solution with terminal datum $q_T$. In Fourier
variables, with $s=T-t$,
$\widehat q(t,\xi)=e^{is\xi A^\top}\widehat q_T(\xi).$
Thus Plancherel gives
\begin{equation}\label{eq:J-obs-gramian}
\int_0^T\!\!\int_{\mathbb R}|B^\top q(t,x)|^2\,dx\,dt
=
\int_{\mathbb R}
\widehat q_T(\xi)^*\mathcal G_T(\xi)\widehat q_T(\xi)\,d\xi,
\end{equation}
where
\[
\mathcal G_T(\xi)
:=
\int_0^T e^{-is\xi A}BB^\top e^{is\xi A^\top}\,ds,
\quad
\text{or equivalently}
\quad
\widehat q_T(\xi)^*\mathcal G_T(\xi)\widehat q_T(\xi)
=
\int_0^T
\left\|
B^\top e^{is\xi A^\top}\widehat q_T(\xi)
\right\|_{\C^m}^2\,ds .
\]
With the notation fixed above, set
$u_T:=S^{-1}\widehat q_T(\xi).$
The Jordan expansion gives
\begin{equation}\label{eq:J-expansion}
\widetilde B^\top e^{is\xi J}u_T
=
\sum_{\lambda\in\sigma(A)}
e^{is\xi\lambda}
\sum_{j=0}^{m_\lambda-1}
(i\xi)^j s^j a_{\lambda,j}(u_T).
\end{equation}
Set
\[
F_\lambda(s)
:=
e^{is\xi\lambda}
\sum_{j=0}^{m_\lambda-1}
(i\xi)^j s^j a_{\lambda,j}(u_T),
\qquad
F(s):=\sum_{\lambda\in\sigma(A)}F_\lambda(s).
\]
Then
$F(s)=B^\top e^{is\xi A^\top}\widehat q_T(\xi).$
For each fixed $\lambda$, since $|e^{is\xi\lambda}|=1$, the polynomial
coercivity estimate \eqref{gramm} yields
\begin{equation}\label{eq:J-diagonal}
\int_0^T\|F_\lambda(s)\|_{\C^m}^2\,ds
\geq
c_T
\sum_{j=0}^{m_\lambda-1}
|\xi|^{2j}\|a_{\lambda,j}(u_T)\|_{\C^m}^2 .
\end{equation}
For \(\lambda\neq\mu\), write
\[
P_\lambda(s,\xi):=\sum_{j=0}^{m_\lambda-1}(i\xi)^j s^j a_{\lambda,j}(u_T),
\qquad
F_\lambda(s)=e^{is\xi\lambda}P_\lambda(s,\xi).
\]
Set $\gamma:=\min_{\lambda\neq\mu}|\lambda-\mu|>0 .$
Then
\[
\int_0^T\langle F_\lambda(s),F_\mu(s)\rangle_{\C^m}\,ds
=
\int_0^T e^{is\xi(\lambda-\mu)}
\langle P_\lambda(s,\xi),P_\mu(s,\xi)\rangle_{\C^m}\,ds ,
\]
up to the harmless sign convention of the Hermitian product. Integrating by
parts in \(s\), and using that \(P_\lambda\) and \(P_\mu\) are polynomials in
\(s\) of bounded degree, we obtain, for \(|\xi|\ge1\),
\[
\left|
\int_0^T
\langle F_\lambda(s),F_\mu(s)\rangle_{\C^m}\,ds
\right|
\le
\frac{C_T}{|\xi|}
\sum_{j,\ell}
|\xi|^{j+\ell}
\|a_{\lambda,j}(u_T)\|_{\C^m}
\|a_{\mu,\ell}(u_T)\|_{\C^m}.
\]
By \(2ab\le a^2+b^2\), this gives
\begin{equation}\label{eq:J-cross}
\left|
\int_0^T
\langle F_\lambda(s),F_\mu(s)\rangle_{\C^m}\,ds
\right|
\le
\frac{C_T}{|\xi|}
\left(
\sum_{j=0}^{m_\lambda-1}
|\xi|^{2j}\|a_{\lambda,j}(u_T)\|_{\C^m}^{2}
+
\sum_{j=0}^{m_\mu-1}
|\xi|^{2j}\|a_{\mu,j}(u_T)\|_{\C^m}^{2}
\right).
\end{equation}
Summing over \(\lambda\neq\mu\), we get
\begin{equation}\label{eq:J-cross-sum}
\sum_{\lambda\neq\mu}
\left|
\int_0^T
\langle F_\lambda(s),F_\mu(s)\rangle_{\C^m}\,ds
\right|
\le
\frac{C_T}{|\xi|}
\sum_{\lambda\in\sigma(A)}
\sum_{j=0}^{m_\lambda-1}
|\xi|^{2j}\|a_{\lambda,j}(u_T)\|_{\C^m}^{2}.
\end{equation}
Combining \eqref{eq:J-diagonal} and \eqref{eq:J-cross-sum}, and choosing
$\xi_1\geq1$ large enough, the cross terms are absorbed. Therefore, for
$|\xi|\geq\xi_1$,
\begin{equation}\label{eq:J-HF-coercive}
\int_0^T\|F(s)\|_{\C^m}^2\,ds
\geq
c_{\mathrm{HF}}
\sum_{\lambda\in\sigma(A)}
\sum_{j=0}^{m_\lambda-1}
|\xi|^{2j}\|a_{\lambda,j}(u_T)\|_{\C^m}^2 .
\end{equation}
By the definition of the Jordan-chain norm in Jordan coordinates,
\eqref{eq:J-HF-coercive} gives
\begin{equation}\label{eq:J-HF-u}
\int_0^T
\|\widetilde B^\top e^{is\xi J}u_T\|_{\C^m}^2\,ds
\geq
c_{\mathrm{HF}}\|u_T\|_{\mathcal J,\xi,J}^2,
\qquad |\xi|\geq\xi_1 .
\end{equation}
We now recall why this high-frequency quantity is non-degenerate. Let
$\mathscr A u_T
:=
\bigl(a_{\lambda,j}(u_T)\bigr)_{
\lambda\in\Lambda,\ 0\leq j\leq m_\lambda-1}.$
If \(\mathscr A u_T=0\), then
\[
\widetilde B_\lambda^\top N_\lambda^j u_{T,\lambda}=0,
\qquad
\lambda\in\sigma(A),\quad 0\leq j\leq m_\lambda-1 .
\]
Returning to the original variables, this means that for each generalized
eigencomponent \(z_\lambda\),
\[
B^\top(A^\top-\lambda I)^jz_\lambda=0,
\qquad
0\leq j\leq m_\lambda-1 .
\]
By Lemma~\ref{lem:partB-JK}, \(z_\lambda=0\) for every \(\lambda\). Hence
\(u_T=0\), so \(\mathscr A\) is injective. By finite dimensionality, there exists
\(\beta>0\) such that
\begin{equation}\label{eq:J-nondegenerate}
\sum_{\lambda\in\sigma(A)}
\sum_{j=0}^{m_\lambda-1}
\|a_{\lambda,j}(u_T)\|_{\C^m}^2
\geq
\beta\|u_T\|_{\C^N}^2 .
\end{equation}
Since \(|\xi|\geq\xi_1\geq1\), it follows that
\begin{equation}\label{eq:J-weighted-nondegenerate}
\|u_T\|_{\mathcal J,\xi,J}^2
\geq
\beta\|u_T\|_{\C^N}^2 .
\end{equation}
We now return to the original variables. Since
$u_T=S^{-1}\widehat q_T(\xi),$
the original-variable Jordan norm satisfies
\[
\|\widehat q_T(\xi)\|_{\mathcal J,\xi}
=
\|u_T\|_{\mathcal J,\xi,J}.
\quad
\text{Moreover,}
\quad
\widetilde B^\top e^{is\xi J}u_T
=
B^\top e^{is\xi A^\top}\widehat q_T(\xi).
\]
Thus \eqref{eq:J-HF-u} yields
\begin{equation}\label{eq:J-HF-fibre}
\int_0^T
\left\|
B^\top e^{is\xi A^\top}\widehat q_T(\xi)
\right\|_{\C^m}^2\,ds
\geq
c_{\mathrm{HF}}
\|\widehat q_T(\xi)\|_{\mathcal J,\xi}^2,
\qquad |\xi|\geq\xi_1 .
\end{equation}
Integrating over \(|\xi|\geq\xi_1\), we obtain
\begin{equation}\label{eq:J-HF-integrated}
\int_{|\xi|\geq\xi_1}
\|\widehat q_T(\xi)\|_{\mathcal J,\xi}^2\,d\xi
\leq
C_T
\int_0^T\!\!\int_{\mathbb R}|B^\top q(t,x)|^2\,dx\,dt .
\end{equation}
We now add the low- and medium-frequency regimes. The low-frequency estimate is
the same as in the diagonalizable case: the Taylor--Kalman expansion gives,
for some $\xi_0>0$,
\begin{equation}\label{eq:J-LF}
\int_{|\xi|\leq\xi_0}
\sum_{k=0}^{K}
|\xi|^{2k}
|\Pi_k\widehat q_T(\xi)|^2\,d\xi
\leq
C_T
\int_0^T\!\!\int_{\mathbb R}|B^\top q(t,x)|^2\,dx\,dt .
\end{equation}
On the compact annulus $\xi_0<|\xi|<\xi_1$, the Gramian
$\mathcal G_T(\xi)$ is continuous and positive definite for $\xi\neq0$ by the
Kalman condition. Hence compactness gives
\begin{equation}\label{eq:J-MF}
\int_{\xi_0<|\xi|<\xi_1}
|\widehat q_T(\xi)|^2\,d\xi
\leq
C_T
\int_0^T\!\!\int_{\mathbb R}|B^\top q(t,x)|^2\,dx\,dt .
\end{equation}
Recall that the Jordan-adapted observation norm is defined by
\[
\|q_T\|_{\mathcal Y_{K,J}^{-}}^2
:=
\int_{|\xi|\leq\xi_0}
\sum_{k=0}^{K}|\xi|^{2k}|\Pi_k\widehat q_T(\xi)|^2\,d\xi
+
\int_{\xi_0<|\xi|<\xi_1}
|\widehat q_T(\xi)|^2\,d\xi
+
\int_{|\xi|\geq\xi_1}
\|\widehat q_T(\xi)\|_{\mathcal J,\xi}^2\,d\xi .
\]
Here the first term is the Kalman low-frequency scale, the second one is the
usual $L^2$ norm on a compact nonzero frequency annulus, and the last one is
the high-frequency Jordan-chain norm. Since these regimes are separated by
frequency cutoffs, the Kalman filtration and the Jordan decomposition need not
be simultaneously diagonal or mutually orthogonal.
Combining \eqref{eq:J-HF-integrated}, \eqref{eq:J-LF}, and \eqref{eq:J-MF},
we obtain
\begin{equation}\label{eq:J-global-observability}
\|q_T\|_{\mathcal Y_{K,J}^{-}}^2
\leq
C_T
\int_0^T\!\!\int_{\mathbb R}|B^\top q(t,x)|^2\,dx\,dt .
\end{equation}
This proves the desired estimate for smooth terminal data. The extension to
all \(q_T\in\mathcal Y_{K,J}^{-}\) follows by density and by the continuity of
the observation map. This completes the proof of
Proposition~\ref{prop:obs-jordan}.
\end{proof}
\begin{corollary}
\label{cor:jordan-controllability}
Assume that \(\sigma(A)\subset\mathbb R\) and that the pair \((A,B)\)
satisfies the Kalman rank condition. Let
\[
\mathcal Y_{K,J}^{+}:=(\mathcal Y_{K,J}^{-})'
\]
be the Hilbert dual of \(\mathcal Y_{K,J}^{-}\) with respect to the
\(L^2\)-Fourier pairing. Then, for every \(T>0\), the primal system
\[
y_t+Ay_x=Bu,
\qquad
u\in L^2((0,T)\times\mathbb R;\mathbb R^m),
\]
is exactly controllable, equivalently null controllable, in the state space
\(\mathcal Y_{K,J}^{+}\).
\end{corollary}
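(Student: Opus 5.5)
The plan is to apply the Hilbert Uniqueness Method, taking Proposition~\ref{prop:obs-jordan} as the observability inequality and an admissibility estimate for the upper bound. I will work on the Fourier side, where everything reduces to fibrewise statements about the Gramian $\mathcal G_T(\xi)$.

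\textbf{Step 1: admissibility.} I first record the continuity of the observation map $q_T\mapsto B^\top q$ from $\mathcal Y_{K,J}^-$ into $L^2((0,T)\times\mathbb R)^m$; this is the content of Lemma~\ref{lem:jordan-upper}, already invoked in the proof of Proposition~\ref{prop:obs-jordan}. If it needs rechecking, it is a fibrewise bound $z^*\mathcal G_T(\xi)z\lesssim$ the local weight, handled regime by regime:
\begin{itemize}
\item for $|\xi|\le\xi_0$, use the Taylor expansion of Lemma~\ref{lem:lowfreq}, which gives the upper bound $C\sum_k|\xi|^{2k}|\Pi_kz|^2$;
\item for $\xi_0<|\xi|<\xi_1$, use boundedness of the Gramian on the annulus;
\item for $|\xi|\ge\xi_1$, use the expansion \eqref{eq:J-expansion} together with Cauchy--Schwarz, which gives $\int_0^T\|F\|^2\,ds\le C_T\|z\|_{\mathcal J,\xi}^2$.
\end{itemize}
Together with Proposition~\ref{prop:obs-jordan}, this shows that $\|q_T\|_{\mathcal O_T}^2:=\int_0^T\!\int|B^\top q|^2$ is a norm on $\mathcal Y_{K,J}^-$ equivalent to $\|\cdot\|_{\mathcal Y_{K,J}^-}$.

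\textbf{Step 2: HUM functional.} Given a target $f:=y_d-S(T)y_0\in\mathcal Y_{K,J}^+$, I minimize
\[
J(q_T)=\tfrac12\int_0^T\!\!\int_{\mathbb R}|B^\top q|^2\,dx\,dt-\langle f,q_T\rangle
\]
over $\mathcal Y_{K,J}^-$. The functional is continuous and strictly convex, and it is coercive by Proposition~\ref{prop:obs-jordan}. Equivalently, Lax--Milgram applies to the bilinear form $\int_0^T\!\int B^\top q\cdot\overline{B^\top p}$, which is coercive on $\mathcal Y_{K,J}^-$, and the right-hand side $\langle f,\cdot\rangle$ is bounded precisely because $\mathcal Y_{K,J}^+$ is the dual space. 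I then set $u:=B^\top q_{\min}$, which lies in $L^2$. The Euler--Lagrange equation, combined with the duality identity
\[
\langle y(T),q_T\rangle-\langle y(0),q(0)\rangle=\int_0^T\!\!\int Bu\cdot q
\]
for transposition solutions, gives $y(T)=y_d$. The identity is verified first for Schwartz data and extended by density, using admissibility. The same estimate yields $\|u\|_{L^2}\le C\|f\|_{\mathcal Y_{K,J}^+}$.

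\textbf{Step 3: main obstacle.} The main obstacle is making sense of the primal dynamics on $\mathcal Y_{K,J}^+$. First, $S(T)y_0$ must be defined, i.e.\ the free flow must act boundedly on $\mathcal Y_{K,J}^+$. Second, $Bu$ with $u\in L^2$ must produce a state in $\mathcal Y_{K,J}^+$. For the first point, I would show by duality that $S(t)^*=e^{itA^\top\partial_x}$ is bounded on $\mathcal Y_{K,J}^-$ uniformly for $t\in[0,T]$. This is fibrewise:
\begin{itemize}
\item at low frequency, a Taylor argument shows that $e^{is\xi A^\top}$ maps layer $\mathcal V_k$ into $\bigoplus_{j\le k}\mathcal V_j$ up to $O(|\xi|)$ factors, which matches the weights;
\item at high frequency, the chain coefficients $a_{\lambda,j}$ of $e^{is\xi J}u$ mix by binomial factors $(is\xi)^{i}$, which are exactly compensated by the weights $|\xi|^{2j}$.
\end{itemize}
The second point is the dual of Step 1. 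Finally, exact controllability and null controllability are equivalent by linearity and invertibility of the flow: replace $y_d$ by $y_d-S(T)y_0$, or conversely steer $y_0$ to $0$ using the reversed time.
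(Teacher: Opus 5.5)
Your proposal follows the paper's route: admissibility of the observation map (Lemma~\ref{lem:jordan-upper}), boundedness and reversibility of the flow on $\mathcal Y_{K,J}^-$ and hence, by duality, on $\mathcal Y_{K,J}^+$ (Lemma~\ref{prop:jordan-reversibility}), then HUM combined with Proposition~\ref{prop:obs-jordan}. The only difference is that you construct the control by minimizing the HUM functional (Lax--Milgram) rather than invoking the closed-range form of Proposition~\ref{prop:abstract-HUM}, which has the small advantage of giving the explicit cost bound $\|u\|_{L^2}\le C\|y_d-S(T)y_0\|_{\mathcal Y_{K,J}^+}$. One precision in your low-frequency stability sketch: the block of $e^{is\xi A^\top}$ from $\mathcal V_k$ into a shallower layer $\mathcal V_j$ must be $O(|\xi|^{k-j})$, not merely $O(|\xi|)$; this follows from $\Pi_j(A^\top)^m\Pi_k=0$ for $m<k-j$, and the blocks into deeper layers are only bounded, which is harmless because their weights are smaller.
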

\begin{proof}
By Lemma~\ref{lem:jordan-upper}, the observation map is continuous on
\(\mathcal Y_{K,J}^{-}\). Moreover,
Proposition~\ref{prop:jordan-reversibility} shows that the homogeneous adjoint
dynamics is continuous and reversible on \(\mathcal Y_{K,J}^{-}\), and hence,
by Hilbert duality, that the homogeneous primal dynamics is continuous and
reversible on \(\mathcal Y_{K,J}^{+}\). Therefore the abstract HUM equivalence
of Proposition~\ref{prop:abstract-HUM} applies with
\[
X=\mathcal Y_{K,J}^{+},
\qquad
X'=\mathcal Y_{K,J}^{-}.
\]
The observability estimate of Proposition~\ref{prop:obs-jordan} then yields
exact controllability, equivalently null controllability, in
\(\mathcal Y_{K,J}^{+}\).
\end{proof}
The controllability space is the $L^2$-pivot dual
$(\mathcal Y_{K,J}^{-})'$, defined by the Fourier-side pairing
\[
\|\varphi_T\|_{(\mathcal Y_{K,J}^{-})'}
:=
\sup_{q_T\neq0}
\frac{
\left|
\int_{\mathbb R}
\langle\widehat q_T(\xi),\widehat\varphi_T(\xi)\rangle_{\C^N}\,d\xi
\right|
}{
\|q_T\|_{\mathcal Y_{K,J}^{-}}
}.
\]
Equivalently, this dual norm is obtained by using the dual Kalman weights
$|\xi|^{-2k}$ at low frequency, the usual $L^2$ norm at medium frequency, and the dual Jordan-chain norm at high frequency.

More precisely, the high-frequency dual norm is
\[
\|w\|_{\mathcal J,\xi,*}^2
:=
\inf
\left\{
\sum_{\lambda\in\sigma(A)}
\sum_{j=0}^{m_\lambda-1}
|\xi|^{-2j}\|\eta_{\lambda,j}\|_{\C^m}^2
\ ;\
w=
\sum_{\lambda\in\sigma(A)}
\sum_{j=0}^{m_\lambda-1}
\mathscr A_{\lambda,j}^*\eta_{\lambda,j}
\right\},
\]
where
$\mathscr A_{\lambda,j}z
:=
\frac1{j!}\,
\widetilde B_\lambda^\top N_\lambda^j
\bigl(S^{-1}z\bigr)_\lambda .$
The adjoint $\mathscr A_{\lambda,j}^*$ is taken with respect to the Hermitian inner product.
The space $\mathcal Y_{K,J}^{-}$ retains the chain-wise high-frequency weights, rather than using the uniform maximal loss
$\nu=\max_{\lambda\in\sigma(A)}(m_\lambda-1)$.
\begin{remark}
Although we do not address the sharpness of the observability cost in the
Jordan case, the proof yields a polynomial upper bound. If
\(C_{\mathrm{obs},J}(T)\) denotes the best constant in
\[
\|q_T\|_{\mathcal Y_{K,J}^{-}}^2
\le
C_{\mathrm{obs},J}(T)
\int_0^T\int_{\mathbb R}|B^\top q(t,x)|^2\,dx\,dt,
\]
then
\[
C_{\mathrm{obs},J}(T)
\le
C
\max\left\{
T^{-1},T^{-(2K+1)},T^{-(2\nu+1)}
\right\},
\qquad
\nu=m_{\max}-1,
\]
where \(C>0\) is independent of \(T\). Equivalently, with
\(M=\max\{K,\nu\}\),
\[
C_{\mathrm{obs},J}(T)
\le
C\max\left\{T^{-1},T^{-(2M+1)}\right\}.
\]
We do not claim that this bound is optimal.
\end{remark}
\subsection{A physical Jordan example: a third-order pressureless moment closure}

We consider the closed third-order moment system
\[
\partial_t M+\partial_xF(M)=0,\quad M=(m_0,m_1,m_2)^\top,
\quad
\text{with}
\quad
F(M)=
\left(
m_1,\,
m_2,\,
\frac{3m_1m_2}{m_0}-\frac{2m_1^3}{m_0^2}
\right)^\top .
\]
This closure is compatible with the monokinetic manifold
\((m_0,m_1,m_2)=(\rho,\rho u,\rho u^2)\), where the third flux becomes
\(\rho u^3\).
Linearizing around
\[
M_0=(\rho_0,\rho_0\lambda,\rho_0\lambda^2),\qquad \rho_0>0,
\quad
\text{gives}
\quad
U_t+A U_x=0,\qquad
A=
\begin{pmatrix}
0&1&0\\
0&0&1\\
\lambda^3&-3\lambda^2&3\lambda
\end{pmatrix}.
\]
The characteristic polynomial is \((\mu-\lambda)^3\), and \(A-\lambda I\) is
nilpotent of order three. Hence \(A^\top\) is similar to a single Jordan block
\(J=\lambda I+N\). More precisely, with
\[
S=
\big(
(\lambda^2,-2\lambda,1)^\top,\,
(-\lambda,1,0)^\top,\,
(1,0,0)^\top
\big),
\]
one has
$S^{-1}A^\top S=J.$
We observe the third physical adjoint component, \(B=e_3\). Since
$B^\top S=(1,0,0),$
the observation in Jordan coordinates is \(\widetilde B^\top z=z_1\). Moreover,
$\operatorname{rank}(B,AB,A^2B)=3,$
so the Kalman condition holds with index \(K=2\).
Since \(N^3=0\), the adjoint flow in Jordan coordinates satisfies
\[
\widetilde B^\top e^{is\xi J}z
=
e^{is\xi\lambda}
\left(
z_1+is\xi z_2-\frac{s^2\xi^2}{2}z_3
\right),
\qquad z=(z_1,z_2,z_3)^\top .
\]
The phase has modulus one, and the moment Gramian associated with
\(1,s,s^2\) yields, for \(|\xi|\ll1\),
\[
z^*\widetilde{\mathcal G}_T(\xi)z
\asymp_T
|z_1|^2+|\xi|^2|z_2|^2+|\xi|^4|z_3|^2 .
\]
Thus the Kalman layers in Jordan coordinates are
\[
\mathcal V_0=\operatorname{span}\{e_1\},\qquad
\mathcal V_1=\operatorname{span}\{e_2\},\qquad
\mathcal V_2=\operatorname{span}\{e_3\}.
\]

At high frequency, the corresponding Jordan-chain jets are
\[
a_0(z)=z_1,\qquad
a_1(z)=z_2,\qquad
a_2(z)=\frac{z_3}{2},
\]
and hence
\[
\|z\|_{\mathcal J,\xi,J}^2
=
|z_1|^2+|\xi|^2|z_2|^2+\frac{|\xi|^4}{4}|z_3|^2.
\]
This example shows that the same powers of \(|\xi|\) appear at low and high
frequencies, but for different reasons: the low-frequency weights come from
the Kalman filtration, whereas the high-frequency weights come from the
Jordan-chain jets.
\section{Conclusion and perspectives}
\label{sec:conclusion}
This work shows that low-frequency observability losses in partially observed
transport systems are governed by the Kalman filtration of the pair $(A,B)$.
The finite-time observation Gramian generates the corresponding anisotropic
Fourier scale, which provides the natural functional framework for whole-line
observability and exact controllability.

In the diagonalizable case, the Kalman filtration determines the sharp
low-frequency weights on each visibility layer. Combined with
Logvinenko--Sereda localization estimates and characteristic propagation, this
yields localized observability on thick sets above the sharp geometric control
threshold.

For real-spectrum systems with nontrivial Jordan structure, the same
low-frequency Kalman mechanism persists. The additional feature is the
high-frequency polynomial growth along Jordan chains, leading to a
Jordan-adapted observability scale in the whole-line setting.

The main open problem is the localized Jordan case. Although the low-frequency
Kalman mechanism should remain unchanged, localization interacts with
generalized Jordan chains in a genuinely new way: components in the same Jordan
block propagate with the same characteristic speed and remain coupled through
the nilpotent part. Hence the diagonal separation argument used in the localized
proof is no longer sufficient. A localized Jordan theory would require a
triangular recovery mechanism along Jordan chains, the high-frequency polynomial
weights of the whole-line Jordan-adapted scale, and a careful treatment of the
interaction between Kalman projectors and nilpotent parts.

Other natural directions include lower-order couplings, variable-coefficient
systems, boundary effects, higher-dimensional transport geometries, and a
microlocal interpretation of the Kalman--Gramian scale. These extensions require
understanding how the layerwise Kalman structure interacts with perturbative,
geometric, and microlocal effects beyond the constant-coefficient
one-dimensional setting.
\section*{Acknowledgments}

The author is deeply grateful to Enrique Zuazua for suggesting the original problem and for the fundamental insight that led to the main results of this work. He also thanks him for his constant guidance, insightful discussions, and encouragement throughout the development of the paper.

The author is grateful to Haitian Yang from Tsinghua University and to Lorenzo Liverani from Friedrich-Alexander-Universit\"at Erlangen--N\"urnberg for their helpful comments.

The author acknowledges that this work was carried out during a research stay at the Chair for Dynamics, Control, Machine Learning and Numerics, Friedrich-Alexander-Universit\"at Erlangen--N\"urnberg, and was supported by the Alexander von Humboldt Foundation through an Alexander von Humboldt
Fellowship.
\clearpage
\appendix
\section{Duality, evolution and stability properties of the Kalman-adapted spaces}
\label{app:HUM}
This appendix records the abstract HUM principle used in the paper and verifies
that it applies to the anisotropic spaces introduced above. We use the dual
spaces
\[
\mathcal Y_K^{+}:=(\mathcal Y_K^{-})',
\qquad
\mathcal Y_{K,J}^{+}:=(\mathcal Y_{K,J}^{-})'.
\]
Thus the relevant adjoint/primal pairs are
\[
(\mathcal Y_K^{-},\mathcal Y_K^{+})
\quad\text{in the diagonalizable case,}
\qquad
(\mathcal Y_{K,J}^{-},\mathcal Y_{K,J}^{+})
\quad\text{in the real-spectrum non-diagonalizable whole-line case.}
\]
The HUM argument is standard: an observability estimate for the adjoint equation
in the negative space yields exact controllability of the primal equation in the
corresponding dual space. Thus the global and localized observability estimates
in $\mathcal Y_K^{-}$ imply controllability in $\mathcal Y_K^{+}$, while the
global Jordan estimate in $\mathcal Y_{K,J}^{-}$ gives controllability in
$\mathcal Y_{K,J}^{+}$. 
\begin{proposition}
\label{prop:abstract-HUM}
Let $X$ be a Hilbert space, let $X'$ be its dual, and let
$\mathcal O\subset\mathbb R$ be the observation set. Set
$U:=L^2((0,T)\times\mathcal O;\mathbb R^m).$
Assume that the homogeneous primal evolution $S(T)$ is an isomorphism on $X$.
Assume moreover that, for every $q_T\in X'$, the adjoint solution with terminal datum $q_T$ is well defined and that the observation map
\[
O_T:X'\longrightarrow U,
\qquad
O_Tq_T:=B^\top q,
\]
is bounded. Define $W_T\in\mathcal L(U,X)$ by transposition:
\[
\langle W_T\Theta,q_T\rangle_{X,X'}
=
(\Theta,O_Tq_T)_U
=
\int_0^T\!\!\int_{\mathcal O}
\Theta(t,x)\cdot B^\top q(t,x)\,dx\,dt.
\]
Then the following properties are equivalent:
\begin{enumerate}
\item the primal system is exactly controllable in $X$ at time $T$;
\item the primal system is null controllable in $X$ at time $T$;
\item there exists $C_T>0$ such that every adjoint solution satisfies
\[
\|q_T\|_{X'}^2
\le
C_T
\int_0^T\!\!\int_{\mathcal O}|B^\top q(t,x)|^2\,dx\,dt .
\]
\end{enumerate}
\end{proposition}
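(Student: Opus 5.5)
The plan is the classical HUM argument, with the controllability map $W_T$ as the central object. First we rewrite exact and null controllability in terms of $W_T$.

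By Duhamel's formula in transposition form, the solution of the primal system with initial datum $y_0\in X$ and control $u\in U$ satisfies
\[
y(T)=S(T)y_0+W_Tu .
\]
This identity is obtained by pairing $y(T)$ with a terminal datum $q_T\in X'$. The computation is done first for smooth data, where one integrates by parts against the adjoint solution. It then extends by the continuity of $O_T$ and by the definition of $W_T$ as the transpose of $O_T$.

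With this formula the equivalences become range statements.
\begin{itemize}
\item Exact controllability at time $T$ means $\operatorname{Ran}W_T=X$, since $y_d-S(T)y_0$ ranges over all of $X$.
\item Null controllability means $\operatorname{Ran}S(T)\subset\operatorname{Ran}W_T$.
\end{itemize}
Because $S(T)$ is assumed to be an isomorphism on $X$, we have $\operatorname{Ran}S(T)=X$. So (1) and (2) are both equivalent to the surjectivity of $W_T$. This disposes of (1)$\Leftrightarrow$(2).

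It remains to show that surjectivity of $W_T$ is equivalent to (3). This is the closed range theorem, or more concretely the Banach duality lemma: for a bounded operator $W_T\in\mathcal L(U,X)$ between Hilbert (or Banach) spaces, $W_T$ is onto if and only if its adjoint $W_T^*:X'\to U$ is bounded below. By the defining identity, $W_T^*=O_T$, after identifying $U$ with its own dual through the $L^2$ inner product. Estimate (3) is exactly $\|q_T\|_{X'}\le C_T^{1/2}\|O_Tq_T\|_U$, so the equivalence follows.

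For completeness I will also give the constructive HUM direction, (3)$\Rightarrow$(1), since it yields the cost bound used in Corollary~\ref{cor:control}. Let $R_X:X'\to X$ denote the Riesz isomorphism. Consider the bounded operator
\[
\Lambda_T:=W_TO_T:X'\to X .
\]
It satisfies
\[
\langle\Lambda_Tq_T,q_T\rangle=\|O_Tq_T\|_U^2\ge C_T^{-1}\|q_T\|_{X'}^2 .
\]
Hence $R_X^{-1}\Lambda_T$, which is an operator on $X'$, is coercive, and Lax--Milgram shows that $\Lambda_T$ is invertible. For a target $z:=y_d-S(T)y_0\in X$, the control $u:=O_T\Lambda_T^{-1}z$ steers the system to $y_d$. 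Moreover it satisfies
\[
\|u\|_U^2=\langle z,\Lambda_T^{-1}z\rangle\le C_T\|z\|_X^2 .
\]
For the converse, (1)$\Rightarrow$(3), if $W_T$ is onto, the open mapping theorem gives $C>0$ such that every $z\in X$ equals $W_Tu$ for some $u$ with $\|u\|_U\le C\|z\|_X$. Pairing with $q_T$ gives
\[
\langle z,q_T\rangle=(u,O_Tq_T)_U\le C\|z\|_X\|O_Tq_T\|_U .
\]
Taking the supremum over $\|z\|_X\le1$ yields (3) with constant $C^2$.

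The main obstacle is the rigorous justification of the Duhamel/transposition identity. The adjoint evolution is only defined by completion on $X'$, and the primal solution is defined by transposition. So one must check that the transposition solution $y(T)$ actually coincides with $S(T)y_0+W_Tu$. I would do this by density of smooth, rapidly decaying data: for such data, integrating $\partial_t\langle y,q\rangle=\langle B\chi_{\mathcal O}u,q\rangle$ over $(0,T)$ holds classically. Both sides then extend by continuity, using the boundedness of $O_T$ and the assumption that $S(T)$ is an isomorphism. Everything else is standard functional analysis.
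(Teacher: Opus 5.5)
Your proposal is correct and follows the paper's proof. The transposition/Duhamel identity $y(T)=S(T)y_0+W_Tu$ reduces both (1) and (2) to $\operatorname{Ran}W_T=X$, using that $S(T)$ is onto, and the closed range theorem with $W_T'=O_T$ turns this into the observability inequality. Your Lax--Milgram construction and open-mapping converse only make the closed-range step explicit, but they have the merit of producing the cost bound $\|u\|_U^2\le C_T\|y_d-S(T)y_0\|_X^2$ used in Corollary~\ref{cor:control}, which the paper's proof leaves implicit.
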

\begin{proof}
This is the standard HUM duality argument. By transposition,
\(W_T' q_T=O_Tq_T\). Moreover
\[
y(T)=S(T)y_0+W_T\Theta .
\]
Hence exact controllability is equivalent to \(\operatorname{Ran}W_T=X\). Null controllability is equivalent to \(\operatorname{Ran}S(T)\subset \operatorname{Ran}W_T\), and since \(S(T)\) is onto, this is again equivalent to
\(\operatorname{Ran}W_T=X\). Finally, by the closed range theorem,
\[
\operatorname{Ran}W_T=X
\quad\Longleftrightarrow\quad
\exists C>0,\quad
\|q_T\|_{X'}\le C\|W_T' q_T\|_U
\quad \forall q_T\in X'.
\]
Since \(W_T'q_T=O_Tq_T=B^\top q\), this is precisely the observability
inequality.
\end{proof}
The following lemma provides the bounded reversible evolution of the homogeneous dynamics, which will be used in the proof of Proposition \ref{prop:abstract-HUM}.
\begin{lemma}\label{bounded}
Assume that $A$ is diagonalizable over $\mathbb R$ with real eigenvalues. Then the homogeneous adjoint dynamics associated with \eqref{adjoint-system} defines a bounded reversible evolution on $\mathcal Y_K^-$ over $[0,T]$, and the homogeneous primal dynamics defines a bounded reversible evolution on $\mathcal Y_{K}^+$ over $[0,T]$. More precisely, there exists $C_T>0$ such that for every $t\in[0,T]$
\[
\|q(t)\|_{\mathcal Y_K^-}\le C_T\|q_T\|_{\mathcal Y_K^-},
\qquad
\|q_T\|_{\mathcal Y_K^-}\le C_T\|q(t)\|_{\mathcal Y_K^-}, \quad \text{and}
\]
\[
\|y(t)\|_{\mathcal Y_{K}^+}\le C_T\|y_0\|_{\mathcal Y_{K}^+},
\qquad
\|y_0\|_{\mathcal Y_{K}^+}\le C_T\|y(t)\|_{\mathcal Y_{K}^+}.
\]
\end{lemma}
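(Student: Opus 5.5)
The plan is to work entirely on the Fourier side. For every \(\tau\in\mathbb R\) the adjoint flow is the multiplier \(\widehat q(t,\xi)=e^{i\tau\xi A^\top}\widehat q_T(\xi)\) with \(\tau=T-t\). Reversibility is the same statement with \(\tau\) replaced by \(-\tau\), so it suffices to prove one uniform bound for \(|\tau|\le T\). Since \(A\) is diagonalizable over \(\mathbb R\), we have \(e^{i\tau\xi A^\top}=P^{-\top}e^{i\tau\xi D}P^{\top}\) with \(D\) real diagonal. Hence \(\|e^{i\tau\xi A^\top}\|\le \kappa(P)\) uniformly in \((\tau,\xi)\in\mathbb R^2\). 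Because \(\rho_k\equiv1\) on \(\{|\xi|\ge1\}\), the high-frequency part of the \(\mathcal Y_K^-\)-norm is an \(L^2\) norm, and this bound settles it. Everything reduces to the block estimates
\[
\rho_j(\xi)^{1/2}\,\|\Pi_j e^{i\tau\xi A^\top}\Pi_k\|\le C_T\,\rho_k(\xi)^{1/2},\qquad |\xi|\le1,\ |\tau|\le T,
\]
for all \(0\le j,k\le K\). Once these hold, summing over \(j,k\) (finitely many terms, Cauchy--Schwarz) and integrating in \(\xi\) gives \(\|q(t)\|_{\mathcal Y_K^-}\le C_T\|q_T\|_{\mathcal Y_K^-}\) on Schwartz data, and then on \(\mathcal Y_K^-\) by density.

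For \(j\ge k\) the block estimate is immediate: \(|\xi|^{2j}\le|\xi|^{2k}\) for \(|\xi|\le1\), and the propagator is uniformly bounded. The key step is the case \(j<k\), where we need the off-diagonal decay \(\|\Pi_j e^{i\tau\xi A^\top}\Pi_k\|\lesssim_T |\xi|^{k-j}\). I would derive this from an algebraic triangularity. Since \(A\mathcal K_{k-2}\subset\mathcal K_{k-1}\), the operator \(A^\top\) maps \(\mathcal K_{k-1}^\perp\) into \(\mathcal K_{k-2}^\perp\). Iterating gives
\[
(A^\top)^\ell\mathcal V_k\subset \mathcal K_{k-1-\ell}^\perp .
\]
This space is orthogonal to \(\mathcal V_j\subset\mathcal K_j\) whenever \(j\le k-1-\ell\). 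Therefore \(\Pi_j(A^\top)^\ell\Pi_k=0\) for \(\ell<k-j\). In the Taylor expansion of \(e^{i\tau\xi A^\top}\), only terms with \(\ell\ge k-j\) survive between the layers \(\mathcal V_k\) and \(\mathcal V_j\). Since \(|\tau\xi|\le T\), the remainder of order \(k-j\) gives \(\|\Pi_j e^{i\tau\xi A^\top}\Pi_k\|\le C(\tau|\xi|)^{k-j}e^{C T}\). This yields exactly the required factor \(|\xi|^{2(k-j)}\) after squaring, and it is the same Kalman triangular structure already used in Lemma~\ref{lem:lowfreq}.

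For the primal flow on \(\mathcal Y_K^+\), with propagator \(e^{-i t\xi A}\) and weights \(\rho_k^{-1}\), I would argue either by duality or directly. By duality, \(e^{-it\xi A}\) is the \(L^2\)-Fourier adjoint of \(e^{it\xi A^\top}\), and \(\mathcal Y_K^+\) is the pivot dual, so the bound transfers with the same constant. The direct argument is symmetric. Now one needs \(\|\Pi_j e^{-it\xi A}\Pi_k\|\lesssim|\xi|^{j-k}\) for \(j>k\). This follows from \(A^\ell\mathcal V_k\subset A^\ell\mathcal K_k\subset\mathcal K_{k+\ell}\perp\mathcal V_j\) for \(j>k+\ell\), by the same Taylor argument; the weights \(|\xi|^{-2j}\le|\xi|^{-2k}\) handle the case \(j<k\).

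The main obstacle is the off-diagonal bound for \(j<k\), specifically verifying the inclusion \(A^\top\mathcal K_{k-1}^\perp\subset\mathcal K_{k-2}^\perp\). The Kalman projectors do not commute with \(A^\top\), and this estimate fails for arbitrary orthogonal splittings. All remaining steps (uniform boundedness of the propagator from real diagonalizability, saturation of the weights at \(|\xi|\ge1\), and the density extension) are routine.
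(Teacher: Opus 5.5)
Your proposal is correct and follows essentially the same route as the paper. Both arguments work with Fourier-side block estimates $\|\Pi_j e^{i\tau\xi A^\top}\Pi_k\|\lesssim_T \min\{|\xi|^{(k-j)_+},1\}$, derived from the triangular relation $A^\top\mathcal K_{k-1}^\perp\subset\mathcal K_{k-2}^\perp$ and Taylor expansion at low frequency, with the uniform propagator bound from real diagonalizability elsewhere, the same argument for the inverse flow, and the mirror relation $A\mathcal K_k\subset\mathcal K_{k+1}$ with reciprocal weights for the primal flow on $\mathcal Y_K^+$ (your alternative, taking the pointwise adjoint of a bounded multiplier on the pivot dual, is also valid).
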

\begin{proof}
We treat first the adjoint evolution. In Fourier variables, $\widehat q(t,\xi)=e^{i\xi(T-t)A^\top}\widehat q_T(\xi).$
Let
\[
\mathcal K_k=\operatorname{Ran}(B,AB,\dots,A^kB),\qquad \mathcal V_k=\mathcal K_k\cap \mathcal K_{k-1}^\perp,
\]
and let $\Pi _k$ be the orthogonal projector onto $\mathcal V_k$.
Since $A(\mathcal K_{k-1})\subset \mathcal K_k$, one has by duality
$A^\top(\mathcal K_k^\perp)\subset \mathcal K_{k-1}^\perp.$
Iterating this inclusion yields
$ \Pi_k (A^\top)^m \Pi_j=0
\quad\text{for } m<j-k,\quad k<j.$
This is the triangular relation. It expresses that, at low frequency, the
propagator can transfer components only from deeper Kalman layers to shallower
ones. The off-diagonal block from $\mathcal V_j$ to $\mathcal V_k$, with
$k<j$, carries the factor $|\xi|^{j-k}$, which is exactly what is needed to
match the anisotropic weights defining $\mathcal Y_K^-$ and, by duality,
$\mathcal Y_K^+$.
Set
$F_{kj}(t,\xi):=\Pi_k e^{i\xi(T-t)A^\top}\Pi_j.$
Since $A$ is diagonalizable over $\mathbb R$ with real spectrum,
\[
\sup_{t\in[0,T],\,\xi\in\R}\|e^{i\xi(T-t)A^\top}\|\le C_T.
\]
Moreover, for $k<j$, the triangular relation implies that all $\xi$-derivatives
of $F_{kj}(t,\xi)$ up to order $j-k-1$ vanish at $\xi=0$.
Using Taylor's formula for $|\xi|\le 1$, uniformly in $t\in[0,T]$, we obtain
\[
\|F_{kj}(t,\xi)\|\le C_T |\xi|^{j-k},\qquad k<j,
\]
while for $|\xi|\ge 1$, or if $k\ge j$, the uniform bound gives
$\|F_{kj}(t,\xi)\|\le C_T.$
Hence, for all $j,k$,
\[
\|F_{kj}(t,\xi)\|\le C_T\min(|\xi|^{(j-k)_+},1).
\]
With
$\rho_k(\xi):=\min(|\xi|^{2k},1),$
it follows that
\[
\rho_k(\xi)\,|F_{kj}(t,\xi)v|^2
\le C_T\, \rho_j(\xi)\,|v|^2.
\quad
\text{Since}
\quad 
\Pi_k\widehat q(t,\xi)=\sum_{j=0}^K F_{kj}(t,\xi)\Pi_j\widehat q_T(\xi),
\]
and the number of blocks is finite, we infer
\[
\sum_{k=0}^K \rho_k(\xi)|\Pi_k\widehat q(t,\xi)|^2
\le
C_T\sum_{j=0}^K \rho_j(\xi)|\Pi_j\widehat q_T(\xi)|^2.
\]
Integrating in $\xi$ yields
$\|q(t)\|_{\mathcal Y_K^-}\le C_T\|q_T\|_{\mathcal Y_K^-}.$
Since the inverse evolution is given by $e^{-i\xi(T-t)A^\top}$, the same argument
applies verbatim and yields
$\|q_T\|_{\mathcal Y_K^-}\le C_T\|q(t)\|_{\mathcal Y_K^-}.$
We now turn to the primal evolution. In Fourier variables,
$\widehat y(t,\xi)=e^{i\xi tA}\widehat y_0(\xi).$
Using again $A(\mathcal K_{k-1})\subset \mathcal K_k$, one obtains the primal triangular relation
\[
\Pi_k A^m \Pi_j=0
\qquad\text{for } m<k-j,\quad k>j.
\]
Arguing as above, but now with the reciprocal weights defining $\mathcal Y_K^+$, we obtain
\[
\|y(t)\|_{\mathcal Y_K^+}\le C_T\|y_0\|_{\mathcal Y_K^+}.
\]
Since the system is linear and autonomous, the same estimate holds for the inverse evolution, hence
\[
\|y_0\|_{\mathcal Y_K^+}\le C_T\|y(t)\|_{\mathcal Y_K^+}.
\]

This shows that both the homogeneous adjoint and primal dynamics define bounded
reversible evolutions on $\mathcal Y_K^-$ and $\mathcal Y_K^+$, respectively.
\end{proof}
We prove that the homogeneous dynamics is continuous and reversible on the Jordan-adapted observability space introduced in Section~6.
\begin{lemma}
\label{prop:jordan-reversibility}
Let $A^\top$ have real spectrum, and let $\mathcal Y_{K, J}^{-}$ be defined
as in Section~6. Then, for every $T>0,$ there exists $C_T>0$ such that
for every $t\in[0,T],$
\[
C_T^{-1}\|q_T\|_{\mathcal Y_{K, J}^{-}}
\le
\|e^{i\xi tA^\top}q_T\|_{\mathcal Y_{K, J}^{-}}
\le
C_T\|q_T\|_{\mathcal Y_{K, J}^{-}}.
\]
Consequently, the homogeneous primal dynamics is continuous and reversible on
the dual space
$\mathcal Y_{K, J}^{+}:=(\mathcal Y_{K, J}^{-})'.$
\end{lemma}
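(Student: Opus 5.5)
The plan is to argue fibrewise in Fourier variables, as in Lemma~\ref{bounded}, and to treat separately the three frequency regimes that enter the definition of \(\mathcal Y_{K,J}^-\). Set \(M_t(\xi):=e^{i\xi tA^\top}\) (the operator in the statement acts on \(\widehat q_T\)). It suffices to prove the upper bound
\[
\|M_t\widehat q_T\|_{\mathcal Y_{K,J}^-}\le C_T\|q_T\|_{\mathcal Y_{K,J}^-},\qquad |t|\le T.
\]
The lower bound then follows by applying this to \(M_{-t}\), since \(M_{-t}M_t=I\) and the argument will be symmetric in the sign of \(t\). Because \(M_t(\xi)\) acts pointwise in \(\xi\) and the cutoffs are scalar, each regime can be handled separately. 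The only exception is that on the annulus, and on the transitions between regimes, we use that all fibre norms are equivalent there, as in Remark~\ref{rem:jordan-cutoffs}.

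\emph{Low frequencies, \(|\xi|\le\xi_0\).} Here I would repeat the triangular argument of Lemma~\ref{bounded}. The inclusion \(A^\top(\mathcal K_k^\perp)\subset\mathcal K_{k-1}^\perp\) uses only \(A\mathcal K_{k-1}\subset\mathcal K_k\), and does not use diagonalizability. It gives \(\Pi_k(A^\top)^m\Pi_j=0\) for \(m<j-k\). The one point where diagonalizability was used before is the uniform bound on \(\|M_t(\xi)\|\). For \(|\xi|\le\xi_0\) and \(|t|\le T\), however, \(\|M_t(\xi)\|\le e^{T\xi_0\|A\|}\) trivially. Taylor's formula then gives
\[
\|\Pi_kM_t(\xi)\Pi_j\|\le C_T|\xi|^{(j-k)_+},
\]
and therefore
\[
\sum_k|\xi|^{2k}|\Pi_kM_t\widehat q|^2\le C_T\sum_j|\xi|^{2j}|\Pi_j\widehat q|^2 .
\]

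\emph{Intermediate frequencies, \(\xi_0<|\xi|<\xi_1\).} Here \(\|M_t(\xi)\|\) is bounded on the compact set \([-T,T]\times\{\xi_0\le|\xi|\le\xi_1\}\), which settles this regime.

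\emph{High frequencies, \(|\xi|\ge\xi_1\).} This is the main step. I would pass to Jordan coordinates \(u=S^{-1}z\), in which \(S^{-1}M_tS=e^{i\xi tJ}\) acts block by block as
\[
e^{i\xi t\lambda}\sum_{i}\frac{(i\xi t)^i}{i!}N_\lambda^i .
\]
Using \(N_\lambda^{j}N_\lambda^{i}=N_\lambda^{i+j}\), I would compute
\[
a_{\lambda,j}(e^{i\xi tJ}u)=e^{i\xi t\lambda}\sum_{i\ge0}\binom{i+j}{j}(i\xi t)^i\,a_{\lambda,i+j}(u).
\]
Hence, for \(|\xi|\ge1\),
\[
|\xi|^{j}\|a_{\lambda,j}(e^{i\xi tJ}u)\|\le C_T\sum_{l\ge j}|\xi|^{l}\|a_{\lambda,l}(u)\| ,
\]
since \(|\xi|^{j}|\xi|^{i}=|\xi|^{l}\) with \(l=i+j\) and \(|t|\le T\). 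Summing over \(\lambda\) and \(j\) gives
\[
\|e^{i\xi tJ}u\|_{\mathcal J,\xi,J}\le C_T\|u\|_{\mathcal J,\xi,J}.
\]
The crucial feature is that the chain-jet norm is stable under the flow because the jets form an upper-triangular system with exactly compensating powers of \(|\xi|\). This is the step I expect to be the main obstacle. The concern is whether \(\|\cdot\|_{\mathcal J,\xi,J}\) is really a norm, but \eqref{eq:J-weighted-nondegenerate} guarantees this. What remains is to make the index bookkeeping in the shifted jets rigorous.

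Finally, I would integrate the three fibrewise bounds and sum them. The duality statement then follows by the Hilbert transposition used in Lemma~\ref{bounded}: the primal propagator \(e^{-i\xi tA}\) is the \(L^2\)-Fourier adjoint of \(M_{-t}\), so its operator norm on \((\mathcal Y_{K,J}^-)'\) equals that of \(M_{-t}\) on \(\mathcal Y_{K,J}^-\), which is bounded by \(C_T\).
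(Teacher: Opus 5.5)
Your proposal is correct and follows essentially the paper's proof: the same three-regime fibrewise argument, with the upper-triangular transformation of the chain jets $a_{\lambda,j}$ at high frequency (your explicit coefficients $\binom{i+j}{j}t^i$ are the paper's $c_{j,r}(t)$) and the inverse flow giving the reverse bound. Your low-frequency step, via $\Pi_k(A^\top)^m\Pi_j=0$ for $m<j-k$ and the trivial bound on $e^{i\xi tA^\top}$ for bounded $\xi$, is stated more precisely than in the paper; one harmless slip is that the primal propagator $e^{-i\xi tA}$ is the Hermitian adjoint of $M_t$, not of $M_{-t}$, which does not matter since you bound $M_t$ for all $|t|\le T$.
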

\begin{proof}
We argue separately in the three frequency regimes used in the definition of
$\mathcal Y_{K, J}^{-}.$
On the low-frequency region $|\xi|\le\xi_0,$ the proof is the same as for the Kalman space $\mathcal Y_K^-.$ The Taylor expansion of the matrix exponential and
the triangular Kalman relations
\[
B^\top(A^\top)^j\Pi_k=0,\qquad j<k,
\]
show that the low-frequency Kalman norm is stable under the homogeneous flow,
with constants depending only on $A$, $B$, $K$, and $T$.
On the fixed annulus $\xi_0<|\xi|<\xi_1,$ all norms are equivalent to the
Euclidean norm. Since $\xi$ remains in a compact set and $t\in[0,T],$ the
matrices $e^{i\xi tA^\top}$ and $e^{-i\xi tA^\top}$ are uniformly bounded. This gives the desired two-sided estimate on the annulus.

It remains to consider the high-frequency region $|\xi|\ge\xi_1.$ In Jordan
coordinates, write
\[
A^\top=SJS^{-1},
\qquad
u=S^{-1}z.
\]
For each eigenvalue $\lambda,$ the restriction of the flow to the generalized
eigenspace has the form
\[
e^{i\xi tJ_\lambda}
=
e^{i\xi t\lambda}
\sum_{r=0}^{m_\lambda-1}
\frac{(i\xi t)^r}{r!}N_\lambda^r.
\]
Thus the coefficients
$a_{\lambda,j}(u)$
appearing in the expansion of
$\widetilde B^\top e^{i\xi sJ}u$
are transformed by a finite triangular system. More precisely, for
\(0\le j\le m_\lambda-1\),
\[
a_{\lambda,j}(e^{i\xi tJ}u)
=
e^{i\xi t\lambda}
\sum_{r=0}^{m_\lambda-1-j}
c_{j,r}(t)(i\xi)^r
a_{\lambda,j+r}(u),
\]
where the coefficients $c_{j,r}(t)$ are bounded uniformly for
$t\in[0,T].$ Multiplying by $|\xi|^j,$ we obtain
\[
|\xi|^j
|a_{\lambda,j}(e^{i\xi tJ}u)|
\le
C_T
\sum_{r=0}^{m_\lambda-1-j}
|\xi|^{j+r}
|a_{\lambda,j+r}(u)|.
\]
After summing over \(j\) and \(\lambda\), this yields
\[
\|e^{i\xi tJ}u\|_{\mathcal J,\xi}
\le
C_T\|u\|_{\mathcal J,\xi},
\qquad |\xi|\ge\xi_1.
\]
Applying the same estimate to the inverse flow $e^{-i\xi tJ}$ gives the
reverse inequality. Returning to the original variables through the fixed
matrix $S$ gives
\[
C_T^{-1}\|z\|_{\mathcal J,\xi}
\le
\|e^{i\xi tA^\top}z\|_{\mathcal J,\xi}
\le
C_T\|z\|_{\mathcal J,\xi},
\qquad |\xi|\ge\xi_1.
\]
Integrating in $\xi$ over the high-frequency region and combining with the
low- and medium-frequency estimates proves the two-sided bound on
$\mathcal Y_{K, J}^{-}.$
The statement on $\mathcal Y_{K, J}^{+}$ follows by Hilbert duality.
\end{proof}
\section{Observation-induced norm equivalences and admissibility estimates}
\label{app:norm-equivalence}
\begin{proof}[Proof of Corollary~\ref{equivalent-norm}]
The lower bound is precisely the localized observability estimate
\eqref{eq:local}. We only prove the upper bound.

Since \(\mathcal O\subset\mathbb R\), Plancherel's theorem and the change of
variables \(s=T-t\) give
\[
\int_0^T\int_{\mathcal O}|B^\top q(t,x)|^2\,dx\,dt
\leq
C\int_{\mathbb R}\int_0^T
\left|
B^\top e^{is\xi A^\top}\widehat q_T(\xi)
\right|^2\,ds\,d\xi ,
\]
where the harmless Plancherel constant is absorbed into \(C\).

We claim that, for every \(k=0,\ldots,K\), there exists \(\mathcal C_T>0\) such that
\[
\left\|B^\top e^{is\xi A^\top}\Pi_k\right\|
\leq
\mathcal C_T\rho_k(\xi)^{1/2},
\qquad
0\leq s\leq T,\quad \xi\in\mathbb R,
\]
where $\rho_k(\xi)=\min\{|\xi|^{2k},1\}.$
Indeed, for \(|\xi|\leq1\), the Kalman-layer cancellation
\[
B^\top(A^\top)^j\Pi_k=0,
\qquad j<k,
\]
and Taylor's formula yield
\[
B^\top e^{is\xi A^\top}\Pi_k
=
O_T(|\xi|^k),
\qquad 0\leq s\leq T .
\]
Thus
\[
\left\|B^\top e^{is\xi A^\top}\Pi_k\right\|
\leq
\mathcal C_T|\xi|^k,
\qquad |\xi|\leq1.
\]
For \(|\xi|\geq1\), since \(A\) is diagonalizable over \(\mathbb R\), the
group \(e^{is\xi A^\top}\) is uniformly bounded for \(0\leq s\leq T\) and
\(\xi\in\mathbb R\). Hence
\[
\left\|B^\top e^{is\xi A^\top}\Pi_k\right\|
\leq
\mathcal C_T,
\qquad |\xi|\geq1.
\]
Combining the two estimates gives the claim.

Now let \(z\in\mathbb C^N\). Since \(z=\sum_{k=0}^K\Pi_k z\), the previous
estimate and the finiteness of the number of Kalman layers imply
\[
\int_0^T
\left|
B^\top e^{is\xi A^\top}z
\right|^2\,ds
\leq
\mathcal C_T
\sum_{k=0}^K
\rho_k(\xi)|\Pi_k z|^2 .
\]
Applying this with \(z=\widehat q_T(\xi)\) and integrating in \(\xi\), we get
\[
\int_0^T\int_{\mathcal O}|B^\top q(t,x)|^2\,dx\,dt
\leq
\mathcal C_T
\sum_{k=0}^K
\int_{\mathbb R}
\rho_k(\xi)|\Pi_k\widehat q_T(\xi)|^2\,d\xi .
\]
Therefore,
\[
\int_0^T\int_{\mathcal O}|B^\top q(t,x)|^2\,dx\,dt
\leq
\mathcal C_T\|q_T\|_{\mathcal Y_K^-}^2 .
\]
This proves the upper bound and completes the proof of the norm equivalence.
\end{proof}
We also record the corresponding upper bound in the Jordan-adapted whole-line
scale.
\begin{lemma}\label{lem:jordan-upper}
For every $T>0$, there exists $C_T>0$ such that every smooth terminal datum
$q_T$ satisfies
\[
\int_0^T\int_{\mathbb R}|B^\top q(t,x)|^2\,dx\,dt
\leq
C_T
\|q_T\|_{\mathcal Y_{K,J}^{-}}^2,
\]
where $q$ solves \eqref{adjoint-system}. Consequently, the observation map
$q_T\mapsto B^\top q$ extends continuously from $\mathcal Y_{K,J}^{-}$ into
$L^2((0,T)\times\mathbb R)^m$.
\end{lemma}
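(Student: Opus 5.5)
The plan is to argue fibrewise in Fourier variables, exactly as in the proof of Corollary~\ref{equivalent-norm}, and to show that the symbol bound for $B^\top e^{is\xi A^\top}$ is dominated, in each of the three frequency regimes, by the corresponding piece of the $\mathcal Y_{K,J}^-$-norm. By Plancherel and $s=T-t$,
\[
\int_0^T\int_{\mathbb R}|B^\top q|^2\,dx\,dt
=
\int_{\mathbb R}\int_0^T\bigl\|B^\top e^{is\xi A^\top}\widehat q_T(\xi)\bigr\|^2\,ds\,d\xi ,
\]
up to the Plancherel constant. It therefore suffices to prove a pointwise-in-$\xi$ bound
\[
\int_0^T\|B^\top e^{is\xi A^\top}z\|^2ds\le C_T\,N_\xi(z)^2,
\]
where $N_\xi$ is the weighted Kalman norm for $|\xi|\le\xi_0$, the Euclidean norm for $\xi_0<|\xi|<\xi_1$, and $\|\cdot\|_{\mathcal J,\xi}$ for $|\xi|\ge\xi_1$. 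The continuous extension then follows by density of smooth data.

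\emph{Low frequencies, $|\xi|\le\xi_0$.} Write $z=\sum_k\Pi_kz$. The Jordan case requires no diagonalizability at all here. Since $|s\xi|\le T\xi_0$ is bounded, the series $e^{is\xi A^\top}=\sum_\ell (is\xi)^\ell(A^\top)^\ell/\ell!$ converges uniformly. Combined with the cancellation $B^\top(A^\top)^\ell\Pi_k=0$ for $\ell<k$ (Lemma~\ref{lem:kalman}), this gives $\|B^\top e^{is\xi A^\top}\Pi_k\|\le C_T|\xi|^k$. Summing over the finitely many layers yields the bound by $\sum_k|\xi|^{2k}|\Pi_kz|^2$.

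\emph{Medium frequencies, $\xi_0<|\xi|<\xi_1$.} On this annulus, with $s\in[0,T]$, the matrix exponential is continuous on a compact set and hence uniformly bounded. This gives the bound $C_T|z|^2$.

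\emph{High frequencies, $|\xi|\ge\xi_1$.} Here the growth of $e^{is\xi J}$ is polynomial, so we cannot bound the propagator alone; this step is the main obstacle. The remedy is to use the exact Jordan expansion \eqref{eq:J-expansion}: with $u=S^{-1}z$,
\[
B^\top e^{is\xi A^\top}z=\sum_{\lambda}e^{is\xi\lambda}\sum_{j=0}^{m_\lambda-1}(i\xi)^js^j a_{\lambda,j}(u).
\]
This step relies on the observation that the factor $1/j!$ in the definition of $a_{\lambda,j}$ exactly matches the Taylor coefficient of $N_\lambda^j$, which should be checked. The expansion expresses the observed signal as a combination of the jets $a_{\lambda,j}(u)$ with coefficients of modulus $|s\xi|^j\le T^j|\xi|^j$. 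By the triangle inequality and Cauchy--Schwarz over the finitely many pairs $(\lambda,j)$,
\[
\|B^\top e^{is\xi A^\top}z\|^2\le C_T\sum_{\lambda,j}|\xi|^{2j}\|a_{\lambda,j}(u)\|^2=C_T\|z\|_{\mathcal J,\xi}^2 .
\]
Integrating in $s$ costs only a factor $T$. So the apparent difficulty disappears, because the high-frequency norm is built from precisely the coefficients of the observed signal, and there is no need to control the unobserved growth of $e^{is\xi J}u$ itself.

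Integrating the three pointwise bounds over their frequency regions and adding gives the claimed estimate with $C_T$ depending on $T$, $A$, $B$, $S$, $\xi_0$ and $\xi_1$. The observation map is thus bounded on the dense subspace of smooth data, so it extends uniquely and continuously to $\mathcal Y_{K,J}^-$.
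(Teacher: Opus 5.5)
Your proposal is correct and follows essentially the same route as the paper's proof. The paper also reduces by Plancherel to a fibrewise bound, uses Taylor expansion with the Kalman cancellation $B^\top(A^\top)^\ell\Pi_k=0$ ($\ell<k$) at low frequency, uniform boundedness of the exponential on the compact annulus, and the exact Jordan-jet expansion with coefficients $|s\xi|^j\le T^j|\xi|^j$ at high frequency, and concludes by density; the $1/j!$ normalisation you flag does check out, since $e^{is\xi J_\lambda}=e^{is\xi\lambda}\sum_j (is\xi)^jN_\lambda^j/j!$.
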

\begin{proof}
By Plancherel's theorem, setting $s=T-t$,
\[
\int_0^T\int_{\mathbb R}|B^\top q|^2
=
\int_{\mathbb R}\int_0^T
\bigl|B^\top e^{is\xi A^\top}\widehat q_T(\xi)\bigr|^2\,ds\,d\xi .
\]
We estimate the integrand on the three frequency regions defining
$\mathcal Y_{K,J}^{-}$. For $|\xi|\le\rho_0$, Taylor's formula together with
the Kalman-layer cancellation gives
\[
\int_0^T
\bigl|B^\top e^{is\xi A^\top}\widehat q_T(\xi)\bigr|^2\,ds
\le
C_T\sum_{k=0}^K |\xi|^{2k}|\Pi_k\widehat q_T(\xi)|^2 .
\]
For $\xi_0<|\xi|<\xi_1$, the uniform boundedness of the exponential yields
\[
\int_0^T
\bigl|B^\top e^{is\xi A^\top}\widehat q_T(\xi)\bigr|^2\,ds
\le C_T|\widehat q_T(\xi)|^2 .
\]
Finally, for $|\xi|\ge\xi_1$, write
\[
A^\top=SJS^{-1},\qquad
u_T=S^{-1}\widehat q_T(\xi),\qquad
\widetilde B^\top=B^\top S .
\]
The Jordan expansion gives
\[
\widetilde B^\top e^{is\xi J}u_T
=
\sum_{\lambda\in\sigma(A)}
e^{is\xi\lambda}
\sum_{j=0}^{m_\lambda-1}
(i\xi)^j s^j a_{\lambda,j}(u_T).
\]
Since $0\le s\le T$ and only finitely many chains occur,
\[
\int_0^T
\bigl|B^\top e^{is\xi A^\top}\widehat q_T(\xi)\bigr|^2\,ds
\le
C_T\sum_{\lambda}
\sum_{j=0}^{m_\lambda-1}
|\xi|^{2j}\|a_{\lambda,j}(u_T)\|^2
=
C_T\|\widehat q_T(\xi)\|_{\mathcal J,\xi}^2 .
\]
Integrating these three estimates gives the result, and the extension follows
by density.
\end{proof}
\section{Characteristic recovery on thick observation sets} \label{lemm}
\begin{proof}
Fix \(T_1>T_\mu^*\), and let \(T\in(T_\mu^*,T_1]\). Set
\[
\tau:=T-T_\mu^*>0,\qquad
T_\mu^*:=\frac{G_{\mathcal O}}{|\mu|},\qquad
U_T:=(0,T)\times\mathcal O,
\]
and choose
\[
\varepsilon:=\frac{\tau}{8},
\qquad
\delta:=\frac1{16}\min\{\ell_{\mathcal O},|\mu|\tau\}.
\]
Let
\[
\mathcal O_\delta
:=
\bigcup_{j\in\mathbb Z}(a_j+\delta,b_j-\delta),
\qquad
\ell_\delta:=\ell_{\mathcal O}-2\delta,
\qquad
G_\delta:=G_{\mathcal O}+2\delta .
\]
Then
\[
2\delta\le \frac{\ell_{\mathcal O}}8,
\qquad
2\delta\le \frac{|\mu|\tau}{8},
\qquad
\ell_\delta\ge \frac78\ell_{\mathcal O},
\qquad
\ell_\delta+G_\delta=\ell_{\mathcal O}+G_{\mathcal O},
\]
and
\[
|\mu|(T-2\varepsilon)-G_\delta
=
|\mu|\tau-\frac{|\mu|\tau}{4}-2\delta
\ge
\frac58|\mu|\tau .
\]

Choose \(\chi\in C^\infty(\mathbb R)\) and
\(\theta\in C_c^\infty(0,T)\) such that
\[
0\le \chi\le \mathbf 1_{\mathcal O},
\qquad
\chi=1\ \text{on }\mathcal O_\delta,
\qquad
\operatorname{supp}\chi\subset\mathcal O_{\delta/2},
\]
and
\[
0\le\theta\le1,
\qquad
\theta=1\ \text{on }[\varepsilon,T-\varepsilon].
\]
The cutoffs are chosen with fixed profiles. Since \(0<\tau\le T_1\), the
definition of \(\delta\) gives, for every \(j\ge0\),
\[
\delta^{-j}\le C_j(T_1,\mu,\ell_{\mathcal O})\tau^{-j}.
\]
Consequently,
\begin{equation}
\label{eq:cutoff-uniform-T1}
\|\chi^{(j)}\|_{L^\infty}
\le C_j(T_1,\mu,\ell_{\mathcal O})\tau^{-j},
\qquad
\|\theta^{(j)}\|_{L^\infty}
\le C_j\tau^{-j}.
\end{equation}
Moreover, for \(j\ge1\), the support of \(\theta^{(j)}\) is contained in two
intervals of length \(O(\varepsilon)\), hence
\[
|\operatorname{supp}\theta^{(j)}|\le C\tau .
\]

Define
\[
m(y):=
\int_0^T
\theta(t)\chi(y-\mu(T-t))\,dt.
\]
Since \(\theta=1\) on \([\varepsilon,T-\varepsilon]\) and
\(\chi=1\) on \(\mathcal O_\delta\), the change of variables
\(x=y-\mu(T-t)\), together with Lemma~\ref{lem:density-bands} applied to
\(\mathcal O_\delta\), gives
\[
m(y)
\ge
\frac1{|\mu|}
|\mathcal O_\delta\cap I_y|,
\qquad
|I_y|=|\mu|(T-2\varepsilon).
\]
Therefore,
\[
|\mathcal O_\delta\cap I_y|
\ge
\frac{\ell_\delta}{\ell_\delta+G_\delta}
\bigl(|I_y|-G_\delta\bigr)_+
\ge
\frac78
\frac{\ell_{\mathcal O}}{\ell_{\mathcal O}+G_{\mathcal O}}
\frac58|\mu|\tau.
\]
Thus
\begin{equation}
\label{eq:m-lower-T1}
m(y)\ge c_0\tau,
\qquad
c_0:=
\frac{35}{64}
\frac{\ell_{\mathcal O}}{\ell_{\mathcal O}+G_{\mathcal O}}.
\end{equation}

Set
\[
\eta(t,y):=\theta(t)\chi(y-\mu(T-t)),
\qquad
b(t,y):=\frac{\eta(t,y)}{m(y)}.
\]
Then
\[
m(y)=\int_0^T\eta(t,y)\,dt,
\qquad
\int_0^T b(t,y)\,dt=1.
\]

We next record the uniform bounds on \(b\). Let
\[
N_{T_1}:=2+\frac{|\mu|T_1}{\ell_{\mathcal O}}.
\]
Indeed, if a spatial interval of length at most \(|\mu|T\le |\mu|T_1\)
meets \(n\) components of \(\mathcal O\), then
\[
(n-2)_+\ell_{\mathcal O}\le |\mu|T_1,
\]
and hence \(n\le N_{T_1}\). Therefore the time spent by a characteristic in
the boundary layers of \(\chi\) is bounded by
\[
\frac{N_{T_1}\delta}{|\mu|}
\le C(T_1,\mu,\ell_{\mathcal O})\tau.
\]

Let \(\alpha=(\alpha_t,\alpha_y)\). For \(|\alpha|\ge1\), every term in
\(\partial_t^{\alpha_t}\partial_y^{\alpha_y}\eta\) contains a derivative of
\(\theta\) or a derivative of \(\chi\). Using \eqref{eq:cutoff-uniform-T1}
and the support estimates above, we obtain
\begin{equation}
\label{eq:eta-fibre-uniform-T1}
\sup_{y\in\mathbb R}
\left\|
\partial_t^{\alpha_t}\partial_y^{\alpha_y}
\eta(\cdot,y)
\right\|_{L^2(0,T)}
\le
C_\alpha(T_1,\mu,\ell_{\mathcal O})
\tau^{\frac12-|\alpha|}.
\end{equation}
For \(\alpha=0\), since \(0\le \eta\le1\),
\[
\|\eta(\cdot,y)\|_{L^2(0,T)}^2
\le
\int_0^T\eta(t,y)\,dt
=
m(y).
\]

For \(j\ge1\),
\[
m^{(j)}(y)
=
\int_0^T
\theta(t)\chi^{(j)}(y-\mu(T-t))\,dt.
\]
The integrand is supported only when the characteristic meets the boundary
layers of \(\chi\). Hence, using the previous bound on the time spent in
these layers and \eqref{eq:cutoff-uniform-T1},
\[
|m^{(j)}(y)|
\le
\|\chi^{(j)}\|_{L^\infty}
\frac{N_{T_1}\delta}{|\mu|}
\le
C_j(T_1,\mu,\ell_{\mathcal O})\tau^{1-j}.
\]
Together with \(m(y)\ge c_0\tau\), this gives
\[
|m^{(j)}(y)|
\le
C_j(T_1,\mu,\ell_{\mathcal O},G_{\mathcal O})
\tau^{-j}m(y),
\qquad j\ge1.
\]
By Faà di Bruno's formula, for every \(j\ge0\),
\begin{equation}
\label{eq:minv-uniform-T1}
\left|(m^{-1})^{(j)}(y)\right|
\le
C_j(T_1,\mu,\ell_{\mathcal O},G_{\mathcal O})
\tau^{-j}m(y)^{-1}.
\end{equation}

Combining \eqref{eq:eta-fibre-uniform-T1}, \eqref{eq:minv-uniform-T1},
Leibniz' formula, and \(m(y)\ge c_0\tau\), we obtain, for every multi-index
\(\alpha\),
\begin{equation}
\label{eq:b-fibre-uniform-T1}
\sup_{y\in\mathbb R}
\left\|
\partial_t^{\alpha_t}\partial_y^{\alpha_y} b(\cdot,y)
\right\|_{L^2(0,T)}
\le
C_\alpha(T_1,\mu,\ell_{\mathcal O},G_{\mathcal O})
\tau^{-|\alpha|-\frac12}.
\end{equation}
When no derivative falls on \(\eta\), we use
\[
\|\eta(\cdot,y)\|_{L^2(0,T)}\le m(y)^{1/2}.
\]
In particular,
\[
\|b(\cdot,y)\|_{L^2(0,T)}
\le
m(y)^{-1/2}
\le
c_0^{-1/2}\tau^{-1/2}.
\]

We first prove the \(L^2\)-estimate. For \(g\in L^2(\mathbb R)^d\), set
\[
M(y):=
\int_0^T
\mathbf 1_{\mathcal O}(y-\mu(T-t))\,dt .
\]
Then
\[
\int_0^T\int_{\mathcal O}
|g(x+\mu(T-t))|^2\,dx\,dt
=
\int_{\mathbb R}|g(y)|^2M(y)\,dy.
\]
Since \(M(y)\ge m(y)\), \eqref{eq:m-lower-T1} gives
\[
\int_0^T\int_{\mathcal O}
|g(x+\mu(T-t))|^2\,dx\,dt
\ge
c_0\tau \|g\|_{L^2(\mathbb R)^d}^2.
\]
Hence
\[
\|g\|_{L^2(\mathbb R)^d}^2
\le
\frac{c_0^{-1}}{T-T_\mu^*}
\int_0^T\int_{\mathcal O}
|g(x+\mu(T-t))|^2\,dx\,dt,
\]
which proves \eqref{eq:L2-characteristic-recovery}.

It remains to prove the \(H^{-s}\)-estimate. For
\(\Psi\in C_c^\infty(U_T)^d\), define
\[
R_\mu\Psi(y):=
\int_0^T
\Psi(t,y-\mu(T-t))\,dt.
\]
Then
\begin{equation}
\label{eq:Rmu-uniform-T1}
\|R_\mu\Psi\|_{H^s(\mathbb R)^d}
\le
C_s T_1^{1/2}\|\Psi\|_{H_0^s(U_T)^d}.
\end{equation}
Indeed, for \(0\le k\le s\), Cauchy's inequality gives
\[
\|\partial_y^kR_\mu\Psi\|_{L^2(\mathbb R)^d}^2
\le
T
\|\partial_x^k\Psi\|_{L^2(U_T)^d}^2
\le
T_1
\|\partial_x^k\Psi\|_{L^2(U_T)^d}^2.
\]

For \(g\in H^{-s}(\mathbb R)^d\), define
\[
\left\langle
g(x+\mu(T-t)),\Psi
\right\rangle_{U_T}
:=
\left\langle
g,R_\mu\Psi
\right\rangle_{\mathbb R}.
\]
This is well-defined by \eqref{eq:Rmu-uniform-T1}.
For \(\phi\in C_c^\infty(\mathbb R)^d\), define
$F_\mu(t,x):=x+\mu(T-t),$
and
\[
(\mathcal E\phi)(t,x)
:=
\theta(t)\chi(x)m(F_\mu(t,x))^{-1}
\phi(F_\mu(t,x)).
\]
Equivalently, in the coordinate \(y=F_\mu(t,x)\),
\[
(\mathcal E\phi)(t,y-\mu(T-t))=b(t,y)\phi(y).
\]
Then \(\mathcal E\phi\in C_c^\infty(U_T)^d\) and
\begin{equation}
\label{eq:right-inverse-uniform-T1}
R_\mu\mathcal E\phi=\phi.
\end{equation}
The affine change of variables \((t,x)\mapsto(t,y)\),
\(y=x+\mu(T-t)\), has Jacobian one, and the Sobolev norms of order \(s\)
in \((t,x)\) and \((t,y)\) are equivalent with a constant depending only on
\(s\) and \(\mu\). Using Leibniz' formula and
\eqref{eq:b-fibre-uniform-T1}, we get
\begin{equation}
\label{eq:E-bound-uniform-T1}
\|\mathcal E\phi\|_{H_0^s(U_T)^d}
\le
C_{\mathcal E,s}(T_1,\mu,\ell_{\mathcal O},G_{\mathcal O})
\tau^{-s-\frac12}
\|\phi\|_{H^s(\mathbb R)^d}.
\end{equation}

Finally, by \eqref{eq:right-inverse-uniform-T1},
\[
\langle g,\phi\rangle_{\mathbb R}
=
\langle g,R_\mu\mathcal E\phi\rangle_{\mathbb R}
=
\left\langle
g(x+\mu(T-t)),\mathcal E\phi
\right\rangle_{U_T}.
\]
Using \eqref{eq:E-bound-uniform-T1}, we obtain
\[
|\langle g,\phi\rangle_{\mathbb R}|
\le
C_{\mathcal E,s}(T_1,\mu,\ell_{\mathcal O},G_{\mathcal O})
\tau^{-s-\frac12}
\|g(x+\mu(T-t))\|_{H^{-s}(U_T)^d}
\|\phi\|_{H^s(\mathbb R)^d}.
\]
Taking the supremum over
\(\phi\in C_c^\infty(\mathbb R)^d\) with
\(\|\phi\|_{H^s(\mathbb R)^d}\le1\), and recalling that
\(\tau=T-T_\mu^*\), gives
\[
\|g\|_{H^{-s}(\mathbb R)^d}
\le
\frac{C_s(T_1)}
{(T-T_\mu^*)^{s+\frac12}}
\|g(x+\mu(T-t))\|_{H^{-s}(U_T)^d}.
\]
This proves \eqref{eq:recovery}.
\end{proof}
\bibliographystyle{plain}
\bibliography{biblio}
\vfill 
\end{document}